\newcommand{\w}{\mathbf{w}}
\newcommand{\x}{\mathbf{x}}
\newcommand{\y}{\mathbf{y}}
\newcommand{\vb}{\mathbf{v}}
\newcommand{\s}{\mathbf{s}}
\newcommand{\RR}{\mathbb{R}} 
\newcommand{\norml}[1]{\| #1 \|}
\newcommand{\normlarge}[1]{\left\Vert #1\right\Vert}
\newcommand{\numleqslant}[1]{\overset{\text{(#1)}}{\leqslant}}
\newcommand{\numequ}[1]{\overset{\text{(#1)}}{=}}
\newcommand{\numgeq}[1]{\overset{\text{(#1)}}{\geqslant}}
\newtheorem{Theorem}{Theorem}
\newtheorem{Proposition}{Proposition} 
\newtheorem{lemma}[Theorem]{Lemma} 
\newtheorem{Assumption}{Assumption}
\newcommand\numberthis{\addtocounter{equation}{1}\tag{\theequation}}
\DeclareMathOperator*{\argmin}{argmin}
\title{Cubic Regularization with Momentum for Nonconvex Optimization} 
\author{} 
\author{ {\bf Zhe Wang} \\
EECS Dept. \\
Ohio State University \\ 
wang.10982@osu.edu \\
\And
{\bf Yi Zhou}  \\
EECS Dept.         \\
Duke University \\
yi.zhou610@duke.edu \\
\And
{\bf Yingbin Liang}   \\
EECS Dept. \\
Ohio State University \\
liang.889@osu.edu\\ 
\And
{\bf Guanghui Lan}   \\
ISyE  Dept. \\
Georgia Institute of Technology \\
george.lan@isye.gatech.edu\\ 
}
\begin{document}  
\maketitle
\begin{abstract}
  Momentum is a popular technique to accelerate the convergence in practical training, and its impact on convergence guarantee has been well-studied for first-order algorithms. However, such a successful acceleration technique has not yet been proposed for second-order algorithms in nonconvex optimization.
  In this paper, we apply the momentum scheme to cubic regularized (CR) Newton's method and explore the potential for acceleration. Our numerical experiments on various nonconvex optimization problems demonstrate that the momentum scheme can substantially facilitate the convergence of cubic regularization, and perform even better than the Nesterov's acceleration scheme for CR. Theoretically, we prove that CR under momentum achieves the best possible convergence rate to a second-order stationary point for nonconvex optimization. Moreover, we study the proposed algorithm for solving problems satisfying an error bound condition and
  establish a local quadratic convergence rate. Then, particularly for finite-sum problems, we show that the proposed algorithm can allow computational inexactness that reduces the overall sample complexity without degrading the convergence rate.
  
\end{abstract}

\section{INTRODUCTION} 

In the era of machine learning, deep models such as neural networks have achieved great success in solving a variety of challenging tasks. However, training deep models is in general a difficult task and traditional first-order algorithms can easily get stuck at sub-optimal points such as saddle points, which have been shown to bottleneck the performance of practical training \citep{Dauphin_2014}. Motivated by this, there is a rising interest in designing algorithms that can escape saddle points in general nonconvex optimization, and the cubic regularization (CR) Newton's method is such a type of popular optimization algorithm.

More specifically, consider the following generic nonconvex optimization problem.
\begin{align}
\min_{\x \in \mathbb{R}^d} f(\x), \label{Object_function}
\end{align}
where $f: \RR^d \to \RR$ is a twice-differentiable and nonconvex function. The CR algorithm \citep{Nesterov2006} takes an initialization $\x_0 \in \mathbb{R}^d$, a proper parameter $M>0$, and generates a sequence $\{\x_k\}_k$ for solving \cref{Object_function} via the following update rule.
\begin{align*} 
\s_{k+1} &= \argmin_{\s\in \RR^d} \nabla f(\x_k)^\top\s     + \frac{1}{2} \s^\top \nabla^2 f(\x_{k}) \s + \frac{M}{6} \norml{\s}^3, \\
\x_{k+1} &= \x_{k} + \s_{k+1}. 
\end{align*}
Intuitively, the main step of CR solves a cubic minimization subproblem that is formulated by the second-order Taylor expansion at the current iterate with a cubic regularizer. Such a cubic subproblem can be efficiently solved by many dedicated solvers \citep{Cartis2011a,Carmon2016b,Agarwal2017} that induce a low overall computation complexity (see \Cref{sub_problem_solver} for further elaboration). By exploiting second order information (i.e., gradient and Hessian) of the objective function, the CR algorithm has been shown to produce a solution $\x$ that satisfies the $\epsilon$-second-order stationary condition, i.e., 
\begin{align}
\norml{\nabla f(\x) } \leqslant \epsilon \quad \text{and} \quad \lambda_{\min} \big(\nabla^2 f(\x)\big) \geqslant - \sqrt{\epsilon},
\end{align} 
where $\lambda_{\min} \big(\nabla^2 f(\x)\big)$ denotes the minimum eigenvalue of the Hessian $\nabla^2 f(\x)$. Unlike the first-order stationary condition (i.e., $\|\nabla f(\x)\| \leqslant \epsilon$) which does not rule out the possibility of converging to a saddle point, the second-order stationary condition requires the corresponding Hessian to be almost positive semidefinite and hence can avoid convergence to strict saddle points (i.e., at which Hessian has negative eigenvalue). In particular, a variety of nonconvex machine learning problems such as phase retrieval \citep{Sun2017}, dictionary learning \citep{Sun2015} and tensor decomposition \citep{Ge2015} have been shown to have only strict saddle points. Therefore, CR is guaranteed to escape all the saddle points and converge to a local minimum in solving these problems.

While most existing studies on the CR algorithm focus on reducing the computation complexity by various sampling schemes, e.g., mini-batch sampling \citep{Xu2017}, sub-sampling \citep{kohler2017}, variance-reduced sampling \citep{Wang2018,Zhou2018}, less attention has been paid to the design of new schemes for accelerating CR. The only exception is \cite{Nesterov2008}, where an acceleration   scheme was proposed for CR, but has been shown to achieve a faster convergence rate than CR only for {\em convex} problems. Such an accelerated scheme consists of hyperparameters that are fine-tuned in the context of convex optimization, and hence may not guarantee to produce a second-order stationary solution in {\em nonconvex} optimization. There does not exist any accelerated CR algorithm that has provable convergence for nonconvex optimization.
Therefore, the aim of this paper is to design a momentum-based scheme for CR with provable second-order stationary convergence guarantee for nonconvex optimization as well as yielding faster convergence in practical scenarios.

\subsection{OUR CONTRIBUTIONS}
Our major contribution lies in proposing the first CR algorithm that incorporates momentum technique, which has provable convergence guarantee to a second-order stationary point in nonconvex optimization. We also performed a comprehensive study of this algorithm from various aspects both in theory and experiments to demonstrate the appealing attributes of the proposed algorithm. Our specific contribution are listed as follows.
\begin{itemize}
\item We propose a CR type algorithm with momentum acceleration (referred to as CRm), which includes a cubic regularization step, a momentum step for acceleration and a monotone step. The momentum step introduces negligible computation complexity compared to that of the cubic regularization step in original CR, but can provide substantial advantage of acceleration. 

\item We establish the global convergence of CRm to a second-order stationary point in nonconvex optimization. The corresponding convergence rate is as fast as that of CR in the order-level, which is the best one can expect for nonconvex optimization. Our experiments demonstrate that CRm  substantially outperforms CR as well as Nesterov's accelerated CR (which does not have guaranteed performance for nonconvex optimization). 

\item We also show that CRm enjoys the local quadratic convergence property under a local error bound condition, which establishes the advantage of the second-order algorithms than the first-order algorithms in nonconvex optimization. 

\item We further show that the inexact variant of CRm significantly improves the computational complexity without losing the convergence rate. We also study the finite-sum problem, where we implement the inexact CRm via a subsampling approach, and established the total Hessian sample complexity to guarantee the convergence with high probability.
\end{itemize}

On the core of our proof technique, we rely on the delicate  design of the adaptive momentum parameter in \cref{li_2}, and the monotone step in the algorithm, which makes it possible to establish the convergence result under nonconvex optimization but with momentum acceleration. 
To the best of our knowledge, there is no  result on accelerated CR type algorithms that have such good convergence property, or even the convergence property under nonconvex optimization.

\subsection{RELATED WORKS}
\textbf{Escaping saddle points:} A number of algorithms have been proposed to escape saddle points in order to find local minima. In general, There are three lines of research. It has been shown that with random perturbation, gradient descent algorithm \citep{Jin2017a}, the stochastic gradient descent \citep{Ge2015},   the zero-th order method \citep{Jin2018},  and the accelerated gradient descent \citep{Jin2017}  can escape saddle points. The gradient descent  has also been incorporated with the negative curvature descent in \cite{Carmon2016a,Liu2017,YangT2017a} in order to converge  to the second-order stationary points. Furthermore, the cubic regularized (CR) algorithm, which first appeared in \cite{Griewank1982}, has been shown by \cite{Nesterov2006} to converge  to the second-order stationary points. \cite{Cartis2011a,Cartis2011b} then proposed an adaptive CR method with an approximate sub-problem solver. \cite{Agarwal2017} established an efficient sub-problem solver for CR by using the Hessian-vector product technique, and \cite{Carmon2016b} showed that gradient descent can efficiently solve the sub-problem in CR. \cite{Yi2018} studied the CR algorithm in nonconvex optimization. This paper further accelerates the CR algorithm with momentum and establishes its convergence rate to a second-order stationary point.

\textbf{Algorithms with momentum for nonconvex optimization:} 
\cite{Ghadimi2016,Li2015} proposed   accelerated gradient descent type of algorithms for nonconvex optimization, which are guaranteed to converge as fast as gradient descent for nonconvex problems.  
\cite{yao2017} proposed an efficient  accelerated proximal gradient descent algorithm for nonconvex problems, which requires only one proximal step in each iteration as compared to the requirement of two proximal steps in each iteration in the algorithm proposed in \cite{Li2015}. Then \cite{li2017} analyzed the algorithm in \cite{yao2017}  under the KL condition. While the existing studies analyzed only convergence to first-order stationary points, this paper proposes the CR algorithms with momentum that converge  to a second-order stationary point.

\textbf{Inexact CR algorithms:}
 To reduce the computational complexity for the CR type of algorithms, various inexact Hessian and gradient approaches were proposed. In particular, \cite{Saeed2017} studied the inexact Hessian CR and accelerated CR for convex optimization, where the inexact level is fixed during iterations. \cite{JinChi2017cubic} studied an inexact CR for nonconvex optimization, which allows both the gradient and Hessian to be inexact.  Alternatively, \cite{Cartis2011a,Cartis2011b} studied the  inexact Hessian CR for nonconvex optimization, where the inexact condition is adaptive during iterations.  \cite{Jiang2017} studied a unified  scheme of inexact accelerated adaptive CR  and gradient descent for convex optimization. 
Furthermore, \cite{kohler2017} proposed a subsampling CR (SCR) that adaptively changes the sample batch size to guarantee the inexactness condition in \cite{Cartis2011a,Cartis2011b}, \cite{Wang2018inexact} relaxed the inexact condition in \cite{kohler2017,Cartis2011a,Cartis2011b}, and \cite{Xu2017} proposed uniform and non-uniform sampling algorithms with fixed inexactness for nonconvex optimization. \cite{Wang2018,Zhou2018} proposed stochastic variance reduced subsampling CR algorithms. This paper establishes the convergence rate for the inexact scenarios of the proposed CR algorithm  with momentum.

\textbf{Local quadratic convergence:} 
The   Newton's method and cubic regularized algorithm have been shown  to converge quadratically to the global minimum under the strongly convex condition in \cite{Nesterov2006,Nesterov2008}, respectively. Furthermore, various Newton-type algorithms, i.e., the Levenberg-Marquardt method  \citep{Yamashita2001,Fan2005}, the regularized Newton method \citep{Li2004}, the regularized proximal Newton's method \citep{Yue2016}, and the CR algorithm \citep{Yue2018}, have been shown to have the local quadratic convergence under the more relaxed local error bound condition. This paper further establishes such a property for the proposed CR with momentum algorithm.

\section{CRm: CUBIC REGULARIZATION WITH MOMENTUM}  
In this section, we propose a CR-type algorithm that adopts a momentum scheme (referred to as CRm). The algorithm steps of CRm are summarized in \Cref{Adaptive_version}.   

 At each iteration, the proposed CRm conducts a cubic step (\cref{I_1}),  a momentum step (\cref{li_2,li_1}), and a monotone step (\cref{li_0}). In particular, the cubic step solves a subproblem of the second-order Taylor expansion with a cubic regularizer at the current iterate $\x_k$.  The cubic step can be implemented efficiently by adopting the solver based on the Hessian-vector product approach (see \Cref{sub_problem_solver} for details). The momentum step is an extrapolation step that aims to accelerate the algorithm. We note that the momentum step requires very little additional computation  compared to the cubic step, but offers substantial advantage for accelerating the algorithm.  The monotone step  chooses the next iteration point between the cubic step and the momentum step to achieve the minimum function value. This guarantees that the algorithm outputs a desirable monotonically decreasing function value sequence, and helps to establish the convergence guarantee under nonconvex optimization.  

\begin{algorithm}  
	\caption{CRm}
	\label{Adaptive_version}
	\begin{algorithmic}[1] 
		\STATE {\bfseries Input:} Initialization $\x_0 =  {\y}_0\in \mathbb{R}^d,\rho < 1, M > L_2$ 
		\FOR{$k=0,1,\dots $}
		\STATE {\bfseries Cubic step:}   
		{ 
		\begin{align*}  { \s}_{k+1} &=\argmin_{\s} \nabla f(\x_k)^\top\s   + \frac{1}{2} \s^\top \nabla^2 f(\x_{k}) \s \\
		&\qquad \qquad \quad + \frac{M}{6} \norml{\s}^3   \\
		\y_{k+1} &= \x_{k} + \s_{k+1} \numberthis \label{I_1}
			\end{align*} 
		}
		\vspace{-.3cm}
		\STATE {\bfseries Momentum step:}   
		\begin{align} 
			\beta_{k+1} &=    \min\{\rho , \norml{\nabla f( {\y}_{k+1})}, \norml{ {\y}_{k+1} - \x_k}  \} \label{li_2}   \\
			 {\vb}_{k+1} &=  {\y}_{k+1} + \beta_{k+1} ( {\y}_{k+1} -  {\y}_{k}) \label{li_1} 
		\end{align}
		\vspace{-.3cm} 
		\STATE \textbf{Monotone Step:}
		\vspace{-.3cm}
		\begin{align}
		\x_{k+1}= \argmin_{\x \in \{ {\y}_{k+1},  {\vb}_{k+1}\}} f(\x) \label{li_0}
		\end{align}  
		\ENDFOR
	\end{algorithmic} 	 
\end{algorithm}
We further highlight the ideas in the design of  CRm. First, we choose the momentum in the direction of $\y_{k+1} - \y_{k}$, which  has been used for the first-order methods with momentum for nonconvex problems \citep{li2017,yao2017}.  Second, the momentum parameter $\beta_{k+1}$ in \cref{li_2} is set to be adaptive  (in fact proportional) to the norm of the progress made in the cubic regularization step and the norm of gradient, i.e., $\norml{ {\y}_{k+1} - \x_k} $ and $\norml{\nabla f( {\y}_{k+1})}$. In this way, if the iterate is far away from a second-order stationary point, $\norml{ {\y}_{k+1} - \x_k}$ and $\norml{\nabla f( {\y}_{k+1})}$ are large so that the momentum takes a large stepsize to make good progress. On the other hand, as the iterate is close to the stationary point, $\norml{ {\y}_{k+1} - \x_k}$ and $\norml{\nabla f( {\y}_{k+1})}$ are small so that the momentum takes a small momentum stepsize in order not to miss the stationary point. It turns out that such a choice of the momentum parameter is critical to guarantee the convergence of CRm (as can be seen in the proof) as well as achieving acceleration. Our experiments (see \Cref{sec: exp}) show that such a momentum scheme can substantially accelerate the convergence of CR in various nonconvex optimization problems. Therefore, the requirement of the adaptive step size $\beta_k$ in \cref{li_2} is not only intuitively reasonable but also theoretically sound. 

In the monotone step, the algorithm  compares the function values between the cubic regularization step and the momentum step, and choose the better one to perform the next step. In this way, the proposed accelerated CR algorithm is guaranteed to be monotone, i.e., the generated function value sequences are monotonically decreasing. This monotone step is not required in convex optimization, but it seems crucial in nonconvex optimization due to the landscape of nonconvex function does not have strong structure as convex function.  We further note that although the momentum step may not play a role in every iteration due to the monotone step, our experiments show that the momentum step does participate for most iterations during the course of convergence, validating its importance to accelerate the algorithm.


\section{CONVERGENCE ANALYSIS OF CRm}

In this section, we establish both the global and the local convergence rates of CRm to a second-order stationary point. 

\subsection{GLOBAL CONVERGENCE OF CRm}	\label{I_s_2}

First recall that our goal is to minimize a twice-differentiable nonconvex function $f(\x)$ (c.f. \cref{Object_function}). We adopt the following standard assumptions on the objective function.
\begin{Assumption} \label{Assumption_1} 
	The objective function in \cref{Object_function} satisfies: 
	\begin{enumerate}[leftmargin=*,topsep=0pt,noitemsep]
		\item   $f$ is twice-continuously differentiable and bounded below, i.e., $f^{\star} \triangleq \inf\limits_{\x \in \mathds{R}^d} f(\x) > -\infty$;
		\item  For all $\alpha\in \RR$, the sublevel set $\{\x: f(\x) \leqslant \alpha \}$ of   $f$ is bounded; 
		\item  The gradient $\nabla f(\cdot)$ and  Hessian $\nabla^2 f(\cdot)$ are $L_1$ and $L_2$-Lipschitz continuous, respectively.
	\end{enumerate} 
\end{Assumption}
\Cref{Assumption_1} imposes standard conditions on the nonconvex objective function $f$. In particular, the bounded sublevel set condition in item 2 is satisfied whenever $f$ is coercive, i.e., $f(\x) \to +\infty$ as $\|\x\| \to +\infty$. This is true for many non-negative loss functions under mild conditions.

Based on \Cref{Assumption_1}, we characterize the global convergence rate of CRm to a second-order stationary point in the following result.
We refer the readers to the supplementary materials for the proof.
\begin{Theorem}[Global convergence rate] \label{Adaptive_algorithm_convergence}
	Let \Cref{Assumption_1} hold and fix any $\epsilon \leqslant 1$. Then, the sequence $\{\x_{k}\}_{k\geqslant 0}$ generated by CRm   contains an $\epsilon$-second-order stationary point provided that the total number of iterations $k$ satisfies that
	\vspace{-0.3cm}
	\begin{align}
	k \geqslant \frac{C}{\epsilon^{3/2}},
	\end{align}
	where $C$ is a universal positive constant and is specified in the proof.
\end{Theorem}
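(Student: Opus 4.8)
The plan is to reduce \Cref{Adaptive_algorithm_convergence} to a per-iteration sufficient-decrease argument of the same flavor as the classical Nesterov--Polyak analysis of cubic regularization, with the monotone step (\cref{li_0}) playing the role of absorbing the momentum step so that the cubic decrease is never undone. At a high level I would establish two facts: (i) each cubic step (\cref{I_1}) decreases the objective by an amount proportional to $\norml{\s_{k+1}}^3$, and (ii) as soon as $\norml{\s_{k+1}}$ is of order $\sqrt{\epsilon}$, the cubic-step output $\y_{k+1}$ is already an $\epsilon$-second-order stationary point. Telescoping the decrease against the lower bound $f^{\star}$ from \Cref{Assumption_1} then forces $\norml{\s_{k+1}}$ to become small within $O(\epsilon^{-3/2})$ iterations, which is exactly the claimed rate; the certified point is the cubic-step output $\y_{k+1}$, which is among the points produced by CRm.

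For (i) and (ii) I would invoke the standard cubic-model estimates. Using the $L_2$-Lipschitz continuity of $\nabla^2 f$ and the fact that $\s_{k+1}$ minimizes the cubic model while the model vanishes at $\s=0$, one obtains the sufficient decrease $f(\x_k) - f(\y_{k+1}) \geq \tfrac{M-L_2}{6}\norml{\s_{k+1}}^3$, which is positive precisely because $M>L_2$. From the first- and second-order optimality conditions of the subproblem, combined again with the Lipschitz Hessian, I would derive $\norml{\nabla f(\y_{k+1})} \leq \tfrac{M+L_2}{2}\norml{\s_{k+1}}^2$ and $\lambda_{\min}\big(\nabla^2 f(\y_{k+1})\big) \geq -\tfrac{M+2L_2}{2}\norml{\s_{k+1}}$. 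Consequently there is a constant $c>0$, depending only on $M$ and $L_2$, such that $\norml{\s_{k+1}} \leq c\sqrt{\epsilon}$ already certifies $\y_{k+1}$ as an $\epsilon$-second-order stationary point, both requirements being of a common order $\sqrt{\epsilon}$ (the normalization $\epsilon\leq 1$ in the theorem is convenient here).

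The crucial structural observation, and the place where momentum enters, is the monotone step: since $\x_{k+1}$ is chosen in \cref{li_0} as the better of $\y_{k+1}$ and $\vb_{k+1}$, we always have $f(\x_{k+1}) \leq f(\y_{k+1})$. Hence the cubic decrease is inherited by the actual iterate, $f(\x_k) - f(\x_{k+1}) \geq \tfrac{M-L_2}{6}\norml{\s_{k+1}}^3$, and $\{f(\x_k)\}$ is monotonically decreasing. Summing over $k=0,\dots,K-1$ and using $f(\x_K) \geq f^{\star}$ gives $\sum_{k=0}^{K-1}\norml{\s_{k+1}}^3 \leq \tfrac{6}{M-L_2}\big(f(\x_0)-f^{\star}\big)$. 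If no $\y_{k+1}$ were $\epsilon$-second-order stationary for $k<K$, then by the contrapositive of (ii) every term would satisfy $\norml{\s_{k+1}}^3 > c^3\epsilon^{3/2}$, forcing $K < \tfrac{6(f(\x_0)-f^{\star})}{(M-L_2)c^3}\,\epsilon^{-3/2}$. This yields the universal constant $C = \tfrac{6(f(\x_0)-f^{\star})}{(M-L_2)c^3}$ and completes the argument.

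The main obstacle I anticipate is not the cubic estimates, which are standard, but controlling the momentum step so that it neither damages the telescoping nor pushes the analysis outside the regime where the Lipschitz constants are usable. The monotone step resolves the first concern directly, and the adaptive choice of $\beta_{k+1}$ in \cref{li_2}---bounded by $\rho$, $\norml{\nabla f(\y_{k+1})}$, and $\norml{\y_{k+1}-\x_k}=\norml{\s_{k+1}}$---keeps the extrapolation displacement $\norml{\vb_{k+1}-\y_{k+1}} = \beta_{k+1}\norml{\y_{k+1}-\y_k}$ small; together with monotonicity this confines every iterate to the bounded sublevel set $\{\x : f(\x) \leq f(\x_0)\}$ guaranteed by \Cref{Assumption_1}. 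The one point requiring care is that the cubic step at iteration $k$ may start from a momentum iterate $\x_k=\vb_k$, so I must verify that the sufficient-decrease and stationarity estimates hold from an arbitrary base point; this is fine, since those estimates only use the subproblem at $\x_k$ together with the global Lipschitz bound, independently of how $\x_k$ was produced.
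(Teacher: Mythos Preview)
Your telescoping argument and the cubic-model estimates are fine, but there is a genuine gap: the theorem asserts that the sequence $\{\x_k\}_{k\geqslant 0}$ contains an $\epsilon$-second-order stationary point, whereas your argument only certifies the cubic-step output $\y_{k_0+1}$. The point $\y_{k_0+1}$ is \emph{not} guaranteed to belong to $\{\x_k\}$; by the monotone step \cref{li_0}, $\x_{k_0+1}$ may equal $\vb_{k_0+1}=\y_{k_0+1}+\beta_{k_0+1}(\y_{k_0+1}-\y_{k_0})$, and nothing you have written shows that $\vb_{k_0+1}$ (hence $\x_{k_0+1}$) is $\epsilon$-second-order stationary. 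So the sentence ``the certified point is the cubic-step output $\y_{k+1}$, which is among the points produced by CRm'' does not close the argument.

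The paper fixes exactly this by transferring the stationarity bounds from $\y_{k+1}$ to $\x_{k+1}$. One first observes $\norml{\x_{k+1}-\y_{k+1}}\leqslant \beta_{k+1}\norml{\y_{k+1}-\y_k}$, and then uses the adaptive choice \cref{li_2} in two different ways: $\beta_{k+1}\leqslant \norml{\nabla f(\y_{k+1})}$ for the gradient bound and $\beta_{k+1}\leqslant \norml{\y_{k+1}-\x_k}$ for the Hessian bound. The remaining factor $\norml{\y_{k+1}-\y_k}$ is \emph{not} a single cubic step and must be controlled separately; the paper does this by the recursion $\norml{\y_{k+1}-\y_k}\leqslant \norml{\y_{k+1}-\x_k}+\rho\,\norml{\y_k-\y_{k-1}}$ (using $\beta_k\leqslant\rho<1$), which telescopes to a uniform bound $c_5$ depending on $\rho$ and $f(\x_0)-f^\star$. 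With this in hand, the $L_1$-Lipschitz gradient and $L_2$-Lipschitz Hessian assumptions give
\[
\norml{\nabla f(\x_{k+1})}\leqslant (1+L_1 c_5)\,\norml{\nabla f(\y_{k+1})},\qquad
\lambda_{\min}(\nabla^2 f(\x_{k+1}))\geqslant \lambda_{\min}(\nabla^2 f(\y_{k+1}))-L_2 c_5\,\norml{\y_{k+1}-\x_k},
\]
so that $\x_{k_0+1}$ itself is $\epsilon$-second-order stationary. Note that this is precisely where the $L_1$ assumption and the hypothesis $\rho<1$ enter; your proposal never uses either, which is a signal that the transfer step is missing rather than merely implicit.
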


 
\Cref{Adaptive_algorithm_convergence} establishes the global convergence rate to an $\epsilon$-second-order stationary point for CRm. Although the obtained convergence rate of CRm  achieves the same order as that of the original CR algorithm in \cite{Nesterov2006}, which is in fact the best that one can expect for general nonconvex optimization, the technical proof critically exploits the design of the momentum scheme, and requires substantial machinery to handle the momentum step. Further in \Cref{sec: exp}, we demonstrate via various experiments that CRm  do enjoy the momentum acceleration and converge much faster than the original CR algorithm.


\subsection{LOCAL CONVERGENCE OF CRm} \label{I_s_3}
It is well known that Newton-type second-order algorithms enjoy a local quadratic convergence rate for minimizing strongly convex functions. While strong convexity is a restrictive condition in nonconvex optimization, many nonconvex  problems such as phase retrieval \cite{Zhang2017} and low-rank matrix recovery \cite{Tu_2016} have been shown to satisfy the following more relaxed local error bound condition \citep{Yue2018}.

\begin{Assumption}[Local error bound]\label{ass_eb}
	Denote $\mathcal{X}$ as the set of second-order stationary points of $f$. There exists $\kappa,r > 0$ such that for all $\x\in \{\x: \mathrm{dist}(\x, \mathcal{X}) \leqslant r\}$, it holds that
	\begin{equation} 
	\textrm{dist}(\x, \mathcal{X}) \leqslant \kappa\|\nabla f(\x)\|, 
	\end{equation} 
	where dist$(\x, \mathcal{X})$ denotes the point-to-set distance between $\x$ and $\mathcal{X}$.
\end{Assumption}
One can easily check that all strongly convex functions satisfy the above local error bound condition. Therefore, the local error bound condition is a more general geometry than strong convexity.  

Next, we explore the local convergence property for CRm  under the local error bound condition. Typically, such a property is due to the usage of the Hessian information in the algorithm. In CRm, the momentum step does not directly exploit the Hessian information. Hence, it is not clear {\em a priori} by including the momentum step whether CRm still enjoys the local quadratic convergence property. The following theorem provides an affirmative answer.
\begin{Theorem} \label{Local_adaptive_momentum_Thm}
	Let Assumptions \ref{Assumption_1} and \ref{ass_eb} hold. Then, the sequence $\{\x_k\}_{k\geqslant0}$ generated by CRm  with $M > L_2$ converges quadratically to a point $\x^\star \in\mathcal{X}$, where $\mathcal{X}$ is the set of second-order stationary points of $f$. That is, there exists an integer $k_1$ such that for all $k \geqslant k_1$,
	\begin{align}
	\norml{\x_{k+1} - \x^\star } \leqslant C  \norml{\x_{k} - \x^\star }^2, \label{eq: q}
	\end{align}
		where $C$ is a universal positive constant  and is specified in the proof.
\end{Theorem}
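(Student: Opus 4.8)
The plan is to reduce the analysis to the pure cubic step and then show that the adaptive momentum parameter $\beta_{k+1}$ in \cref{li_2} is small enough that the extrapolation $\vb_{k+1}$ cannot destroy the quadratic rate already enjoyed by $\y_{k+1}$. First I would invoke \Cref{Adaptive_algorithm_convergence} together with \Cref{ass_eb} to argue that the iterates eventually enter the region $\{\x:\mathrm{dist}(\x,\mathcal{X})\le r\}$ on which the error bound is active; from that point on I would work entirely with $d_k:=\mathrm{dist}(\x_k,\mathcal{X})$, writing $\x_k^\star$ for the projection of $\x_k$ onto $\mathcal{X}$, and only at the very end translate back to the distance to the limit $\x^\star$.

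The engine of the argument is the local quadratic contraction of the cubic step, in the spirit of \citep{Yue2018}. Writing the subproblem optimality condition from \cref{I_1} as $(\nabla^2 f(\x_k)+\tfrac{M}{2}\norml{\s_{k+1}}I)\s_{k+1}=-\nabla f(\x_k)$ with $\nabla^2 f(\x_k)+\tfrac{M}{2}\norml{\s_{k+1}}I\succeq 0$, I would first establish the step-size estimate $\norml{\s_{k+1}}\le C_2 d_k$. A Taylor expansion with the $L_2$-Lipschitz Hessian together with the optimality condition then yields $\norml{\nabla f(\y_{k+1})}\le \tfrac{M+L_2}{2}\norml{\s_{k+1}}^2\le C d_k^2$, and feeding this into the error bound $\mathrm{dist}(\y_{k+1},\mathcal{X})\le\kappa\norml{\nabla f(\y_{k+1})}$ produces the cubic contraction $\mathrm{dist}(\y_{k+1},\mathcal{X})\le C_1 d_k^2$.

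Next I would control the momentum iterate. The decisive observation is that, by the very definition in \cref{li_2}, $\beta_{k+1}\le\norml{\nabla f(\y_{k+1})}\le C d_k^2$, so the extrapolation displacement in \cref{li_1} obeys $\norml{\vb_{k+1}-\y_{k+1}}=\beta_{k+1}\norml{\y_{k+1}-\y_k}\le C d_k^2\cdot D$, where $D$ bounds $\norml{\y_{k+1}-\y_k}$ on the bounded local region. Combining with the triangle inequality and the cubic contraction gives $\mathrm{dist}(\vb_{k+1},\mathcal{X})\le C_4 d_k^2$. Since the monotone step \cref{li_0} selects $\x_{k+1}$ from $\{\y_{k+1},\vb_{k+1}\}$ and both candidates satisfy an $O(d_k^2)$ distance bound, I obtain $d_{k+1}\le C_4 d_k^2$ unconditionally, i.e. the momentum and the monotone selection are harmless at second order.

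Finally I would upgrade this set-distance contraction to the stated point-distance quadratic rate. The bound $\norml{\x_{k+1}-\x_k}\le C_5 d_k$ (valid for either monotone choice, since $\norml{\s_{k+1}}\le C_2 d_k$ and $\norml{\vb_{k+1}-\y_{k+1}}=O(d_k^2)$) together with the fast decay $d_{k+1}\le C_4 d_k^2$ makes $\sum_k\norml{\x_{k+1}-\x_k}$ finite, so $\{\x_k\}$ is Cauchy and converges to some $\x^\star\in\mathcal{X}$; the same tail estimate gives $\norml{\x_k-\x^\star}\le C_7 d_k$, while trivially $d_k\le\norml{\x_k-\x^\star}$, so $d_k$ and $\norml{\x_k-\x^\star}$ are comparable. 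Substituting into $d_{k+1}\le C_4 d_k^2$ then yields \cref{eq: q}. The step I expect to be the main obstacle is the step-size estimate $\norml{\s_{k+1}}\le C_2 d_k$: a naive use of the optimality condition only delivers $\norml{\s_{k+1}}=O(\sqrt{d_k})$, which would degrade the rate to merely linear, and getting the sharp linear-in-$d_k$ bound requires exploiting that the component of $\nabla f(\x_k)$ along the near-degenerate eigenspace of $\nabla^2 f(\x_k^\star)$ is itself $O(d_k^2)$ — a consequence of $\nabla f(\x_k^\star)=0$, $\nabla^2 f(\x_k^\star)\succeq 0$, and the spectral gap in the directions normal to $\mathcal{X}$ that is forced by the error bound.
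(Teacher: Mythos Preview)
Your proposal is correct and essentially mirrors the paper's proof: reduce to the cubic step, control the momentum perturbation via $\beta_{k+1}\le\norml{\nabla f(\y_{k+1})}$ together with a uniform bound on $\norml{\y_{k+1}-\y_k}$ (the paper's \Cref{momentum_6}), obtain the set-distance contraction $\mathrm{dist}(\x_{k+1},\mathcal{X})\le C\,\mathrm{dist}(\x_k,\mathcal{X})^2$, and then run the Cauchy/tail-sum argument to pass from set distance to point distance. The step-size estimate $\norml{\s_{k+1}}\le C_2\,\mathrm{dist}(\x_k,\mathcal{X})$ that you flag as the main obstacle is precisely \Cref{ada_lemma_1}, which the paper imports wholesale from \citep{Yue2018} rather than reproving; the only cosmetic difference is that the paper applies the error bound directly to $\x_{k+1}$ and transfers to $\y_{k+1}$ via Lipschitz continuity of $\nabla f$, whereas you bound the two monotone candidates separately.
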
 
Under the local error bound condition,  \Cref{Local_adaptive_momentum_Thm} shows that CRm enjoys a quadratic convergence rate as shown in \cref{eq: q}. To elaborate, note that \Cref{Adaptive_algorithm_convergence} guarantees the convergence of CRm  to a second-order stationary point, i.e., $\norml{\x_{k} - \x^\star } \to 0$ as $k\to \infty$. Thus, the recursion in \cref{eq: q} implies that $C \norml{\x_{k} - \x^\star } \leqslant (C \norml{\x_{k_1} - \x^\star })^{2^{k-k_1}}$, which is at a quadratic converge rate. In particular, the region of quadratic convergence is defined by $\norml{\x_{k} - \x^\star } \leqslant  1/C$. Such quadratic convergence achieves an $\epsilon$-accuracy second-order stationary point within $k = O(\log\log(1/\epsilon))$ number of iterations, which is much faster than the linear converge rate of fisrt-order methods in local region. 

 Local quadratic convergence has also been established for the original CR algorithm under the local error bound condition \cite{Yue2018}. As a comparison, our proof of \Cref{Local_adaptive_momentum_Thm} for CRm exploits the proposed momentum scheme, which results in additional terms that requires extra effort to handle. 



\section{INEXACT VARIANTS OF CRm} \label{I_s_4}

The major computational load of CRm lies in the cubic step, which requires to solve a computationally costly optimization problem. In this section, we explore three implementation schemes that can efficiently perform the cubic step without sacrificing the acceleration performance.



\subsection{CUBIC STEP WITH INEXACT HESSIAN} 
The cubic step requires the full Hessian information, which can be too costly in practice. Instead,
we consider performing the following cubic step with an inexact approximation of the Hessian.
\begin{align} 
\hat{ \x}_{k+1} = \argmin_{\s \triangleq \x - \x_k} \nabla f(\x_k)^\top\s + \frac{1}{2} \s^\top \mathbf{H}_k \s + \frac{M}{6} \norml{\s}^3,
\label{eq: H}
\end{align}  
where $\mathbf{H}_k$ denotes the inexact estimation of the full Hessian $\nabla^2 f(\x_k)$, and their difference is assumed to satisfy the following criterion. \Cref{sec:subsampling} proposes a subsampling scheme to achieve \Cref{assump_2} for the finite-sum problem.
\begin{Assumption}\label{assump_2}
	The inexact Hessian $\mathbf{H}_k$ in \cref{eq: H} satisfies, for all $k \geqslant 0$,
	\begin{align*}
	\norml{\mathbf{H}_k - \nabla^2 f(\x_{k})} \leqslant \epsilon_1.
	\end{align*}
\end{Assumption} 
\Cref{assump_2} assumes that the inexact Hessian is close to the exact one in terms of a small operator norm gap. Such inexact criterion has been considered in \cite{JinChi2017cubic,Xu2017} to study the convergence property of the inexact CR algorithm. 

Next, we study the convergence of the inexact variant  of CRm  by replacing the cubic step in \cref{I_1} with the inexact cubic step in \cref{eq: H}.  Our main result is summarized as follows, and the proof is provided in the supplemental materials. 

\begin{Theorem} \label{tyep1_im_thm}
	Let Assumptions \ref{Assumption_1} and \ref{assump_2} hold and fix any $\epsilon \leqslant 1$. Then, the sequence $\{\x_{k} \}_{k\geqslant 0}$ generated by the inexact CRm with $M >  {2 L_2}/{3}  + 2$ and $ \epsilon_1 = \theta  \sqrt{\epsilon}$   contains an $\epsilon$-second-order stationary point provided that the total number of iterations $k$ satisfies that
	\begin{align}
	k \geqslant \frac{C}{\epsilon^{3/2}},
	\end{align}
	where  $C, \theta$ are universal constants, and  are specified in the proof.
\end{Theorem}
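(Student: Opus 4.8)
The plan is to follow the same template one would use for \Cref{Adaptive_algorithm_convergence}, replacing the three exact per-iteration estimates produced by the cubic step (the gradient bound, the minimum-eigenvalue bound, and the function-value descent) by their inexact counterparts, and then showing that the extra error terms introduced by $\mathbf{H}_k$ are controlled once $\epsilon_1 = \theta\sqrt{\epsilon}$ with $\theta$ small. Throughout I write $\hat{\s}_{k+1}$ for the step in \cref{eq: H} and $\hat{\y}_{k+1} = \x_k + \hat{\s}_{k+1}$. First I would record the first- and second-order optimality conditions of the inexact subproblem, namely $\nabla f(\x_k) + \mathbf{H}_k \hat{\s}_{k+1} + \frac{M}{2}\norml{\hat{\s}_{k+1}}\hat{\s}_{k+1} = 0$ and $\mathbf{H}_k + \frac{M}{2}\norml{\hat{\s}_{k+1}} I \succeq 0$; these are the only two places where the analysis sees the Hessian inexactness.

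Next I would combine these with \Cref{assump_2} and the $L_2$-Lipschitz Hessian to obtain the three inexact estimates. The first-order condition and the cubic Taylor remainder give $\norml{\nabla f(\hat{\y}_{k+1})} \leqslant \frac{M+L_2}{2}\norml{\hat{\s}_{k+1}}^2 + \epsilon_1 \norml{\hat{\s}_{k+1}}$; the second-order condition together with $\norml{\mathbf{H}_k - \nabla^2 f(\x_k)} \leqslant \epsilon_1$ gives $\lambda_{\min}(\nabla^2 f(\hat{\y}_{k+1})) \geqslant -\frac{M+2L_2}{2}\norml{\hat{\s}_{k+1}} - \epsilon_1$; and substituting the optimality conditions into the cubic upper bound $f(\x_k + \s) \leqslant f(\x_k) + \nabla f(\x_k)^\top\s + \frac{1}{2}\s^\top\nabla^2 f(\x_k)\s + \frac{L_2}{6}\norml{\s}^3$ yields
\begin{align*}
f(\x_k) - f(\hat{\y}_{k+1}) \geqslant \frac{3M - 2L_2}{12}\norml{\hat{\s}_{k+1}}^3 - \frac{\epsilon_1}{2}\norml{\hat{\s}_{k+1}}^2 .
\end{align*}
Relative to the exact case each bound picks up exactly one term linear in $\epsilon_1$. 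From the first two estimates, if $\hat{\y}_{k+1}$ is \emph{not} an $\epsilon$-second-order stationary point then $\norml{\hat{\s}_{k+1}} > \mu$ for some $\mu = c\sqrt{\epsilon} = \Theta(\sqrt{\epsilon})$, the threshold being obtained by forcing $\frac{M+L_2}{2}\norml{\hat{\s}_{k+1}}^2 + \epsilon_1\norml{\hat{\s}_{k+1}} \leqslant \epsilon$ and $\frac{M+2L_2}{2}\norml{\hat{\s}_{k+1}} + \epsilon_1 \leqslant \sqrt{\epsilon}$, which hold at $\norml{\hat{\s}_{k+1}} = \Theta(\sqrt{\epsilon})$ provided $\theta$ is small in terms of $M, L_2$.

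The crux, and the step I expect to be the main obstacle, is that the inexactness error in the descent scales as $\norml{\hat{\s}_{k+1}}^2$ — one power below the cubic progress $\norml{\hat{\s}_{k+1}}^3$ — so for small steps it could in principle cancel all the descent. This is exactly where the coupling $\epsilon_1 = \theta\sqrt{\epsilon}$ and the hypothesis $M > 2L_2/3 + 2$ enter: the latter makes the cubic coefficient $\frac{3M-2L_2}{12} > \frac{1}{2}$ strictly positive and bounded away from zero, while on every non-stationary iteration $\norml{\hat{\s}_{k+1}} > \mu = c\sqrt{\epsilon}$, so the error-to-descent ratio $\frac{6\epsilon_1}{(3M-2L_2)\norml{\hat{\s}_{k+1}}} \leqslant \frac{6\theta}{(3M-2L_2)c}$ is at most $\frac{1}{2}$ once $\theta$ is taken sufficiently small. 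Hence every non-stationary iteration genuinely decreases $f$ by $\Omega(\epsilon^{3/2})$.

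Finally I would telescope. The monotone step \cref{li_0} guarantees $f(\x_{k+1}) \leqslant f(\hat{\y}_{k+1})$, so the per-step decrease survives the momentum step and $f(\x_k) - f(\x_{k+1}) \geqslant \Omega(\epsilon^{3/2})$ on every non-stationary iteration; this is precisely why appending a momentum step does not break the nonconvex guarantee. Summing and using $f(\x_0) - f^{\star} < \infty$ from \Cref{Assumption_1} bounds the number of non-stationary iterations by $O(\epsilon^{-3/2})$, so some $\hat{\y}_{k+1}$ with $k = O(\epsilon^{-3/2})$ is an $\epsilon$-second-order stationary point. To place it in the output sequence $\{\x_k\}$, I would invoke the adaptive parameter \cref{li_2}: at such an iterate $\beta_{k+1} \leqslant \norml{\nabla f(\hat{\y}_{k+1})} \leqslant \epsilon$, and since \Cref{Assumption_1}(2) keeps all iterates in a compact set of diameter $D$, the displacement $\norml{\vb_{k+1} - \hat{\y}_{k+1}} = \beta_{k+1}\norml{\hat{\y}_{k+1} - \hat{\y}_k} \leqslant \epsilon D$ is $O(\epsilon)$; the $L_1$- and $L_2$-Lipschitz bounds then degrade the gradient and eigenvalue conditions only by $O(\epsilon)$, so whichever of $\hat{\y}_{k+1}, \vb_{k+1}$ is selected as $\x_{k+1}$ remains an $\epsilon$-second-order stationary point up to a constant rescaling absorbed into $C$. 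This mirrors the treatment of the momentum point in the exact setting of \Cref{Adaptive_algorithm_convergence}.
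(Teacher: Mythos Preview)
Your proposal is correct and follows essentially the same route as the paper: derive the three inexact per-step estimates (descent, gradient, minimum eigenvalue) from the optimality conditions of \cref{eq: H} together with \Cref{assump_2}, show that a small step $\|\hat{\s}_{k+1}\| \leq \Theta(\sqrt{\epsilon})$ forces second-order stationarity while a large step yields an $\Omega(\epsilon^{3/2})$ decrease that telescopes through the monotone step, and then transfer the conclusion from $\hat{\y}_{k+1}$ to $\x_{k+1}$ using the adaptive $\beta_{k+1}$.

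The one technical difference worth flagging is how you control $\|\hat{\y}_{k+1} - \hat{\y}_k\|$ in that final transfer. You appeal to the bounded sublevel set in item 2 of \Cref{Assumption_1} to get a diameter $D$, use only the clause $\beta_{k+1} \leq \|\nabla f(\hat{\y}_{k+1})\| \leq \epsilon$, and absorb the resulting $(1+L_1D)$ and $(1+L_2D)$ factors by a rescaling of $\epsilon$. The paper instead proves a dedicated bound $\|\hat{\y}_{k+1} - \hat{\y}_k\| \leq c_8$ by unrolling the recursion $\|\hat{\y}_{i+1} - \hat{\y}_i\| \leq \|\hat{\y}_{i+1} - \x_i\| + \rho\,\|\hat{\y}_i - \hat{\y}_{i-1}\|$ (using $\beta_i \leq \rho < 1$) and controlling $\max_i \|\hat{\y}_{i+1} - \x_i\|$ via the cubic descent; it then uses \emph{both} clauses of \cref{li_2} --- $\beta_{k+1} \leq \|\nabla f(\hat{\y}_{k+1})\|$ for the gradient bound and $\beta_{k+1} \leq \|\hat{\y}_{k+1} - \x_k\|$ for the eigenvalue bound --- and folds the resulting constants directly into the explicit choice of $\theta$, so no rescaling is needed at the end. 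Your version is a bit more elementary; the paper's is cleaner in the constants and does not lean on the sublevel-set assumption for this step. One small point to tighten in your write-up: at the first stationary iteration $\hat{\y}_{k+1}$ need not itself lie in the sublevel set, but since $\|\hat{\y}_{k+1} - \x_k\| \leq \mu$ and $\x_k,\hat{\y}_k$ do, a triangle inequality still gives the needed uniform bound.
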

\Cref{tyep1_im_thm} shows that, under a proper inexact criterion, the iteration complexity of  inexact CRm  is on the same order as that of exact CRm for achieving an $\epsilon$-second-order stationary point. 
Since the inexact Hessian saves the computation in each iteration comparing to the full Hessian, it is clear that the overall computation complexity of the inexact CRm  is less than that of the exact cases. In \Cref{ex_inexact_exact}, we verify through experiments that the inexact algorithm do perform much better than the corresponding exact version. 

We note that the proof of   \Cref{tyep1_im_thm} suggests that the condition that $\norml{\y_{k+1} - \x_k} \leqslant \epsilon_1$ implies the point $\x_{k+1}$ is an $\epsilon$-second-order stationary point, where $ \epsilon_1 = \theta  \sqrt{\epsilon}$. Thus, the implementation of the inexact CRm can terminate by checking the satisfaction of the condition $\norml{\y_{k+1}- \x_k} \leqslant \epsilon_1$.

\subsection{INEXACT CRm VIA SUBSAMPLING}\label{sec:subsampling}

In this subsection, we consider a general finite-sum optimization problem, where inexact CRm can be implemented via subsampling.  More specifically, consider to solve the following optimization problem:
\begin{align}
	 f(x) \triangleq \sum_{i=1}^{n} f_i(x),
\end{align}
where $f_i(\cdot)$ is possibly nonconvex. Furthermore, we assume that \Cref{Assumption_1} holds for each $f_i(\cdot)$. 
For finite-sum problems, the full Hessian can be approximated by the Hessian of a mini-batch of data samples each uniformly randomly drawn from the dataset,  i.e.,
\begin{align}
	 \mathbf{H}_k = \frac{1}{|S_1|} \sum_{i \in S_1} \nabla^2 f_i(\x_k). \label{batch}
\end{align}


We use the subsampling technique introduced in \cite{kohler2017} to satisfy the inexact condition in \Cref{assump_2}. The following theorem provides our characterization of the overall Hessian sample complexity in order to guarantee the convergence of the subsampling algorithm with high probability over the entire iteration process.
\begin{Theorem}[Total Hessian sample complexity] \label{total_compelxity}  Assuming that \Cref{Assumption_1} holds for each $f_i(\cdot)$,  and let the sub-sampled mini-batch of Hessians $\mathbf{H}_k, k = 0, 1, \ldots$   satisfies  
	\begin{align*}
		|S_1| = \left( \frac{8L_1^2}{\theta^2 \epsilon} + \frac{4L_1 }{3 \theta \sqrt{\epsilon} }\right) \log \left(\frac{4d}{ \epsilon^{3/2}\delta}\right),
	\end{align*} 
	 then the sequence $\{\x_{k} \}_{k\geqslant 0}$ generated by the inexact CRm  with $M >  L_2  + 2$  outputs an $\epsilon$-second-order stationary point with probability at least $1 - \delta$ by taking at most the following number of Hessian samples in total:
\begin{align*}
 S \leqslant C\left( \frac{8L_1^2}{\theta^2 \epsilon^{5/2}} + \frac{4L_1 }{3 \theta {\epsilon}^2 }\right) \log \left(\frac{4d}{\epsilon\delta}\right) . 
\end{align*}
\end{Theorem}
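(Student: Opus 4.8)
The plan is to reduce this probabilistic statement to the deterministic inexact guarantee of \Cref{tyep1_im_thm} via a matrix concentration argument followed by a union bound over iterations. The key observation is that \Cref{tyep1_im_thm} is purely deterministic: once \Cref{assump_2} holds at every iterate with $\epsilon_1 = \theta\sqrt\epsilon$, the inexact CRm is guaranteed to reach an $\epsilon$-second-order stationary point within $K = O(\epsilon^{-3/2})$ iterations. So the entire task is to show that the subsampled Hessian $\mathbf{H}_k$ in \cref{batch} satisfies $\norml{\mathbf{H}_k - \nabla^2 f(\x_k)} \leqslant \theta\sqrt\epsilon$ simultaneously across all these iterations with probability at least $1-\delta$, and then to multiply the batch size by the iteration count.

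First I would set up the per-iteration concentration. Under \Cref{Assumption_1} applied to each $f_i$, the Lipschitz gradient gives $\norml{\nabla^2 f_i(\x)} \leqslant L_1$, so the centered summands $\nabla^2 f_i(\x_k) - \nabla^2 f(\x_k)$ are symmetric, mean-zero (by uniform sampling, which makes $\mathbf{H}_k$ an unbiased estimate of $\nabla^2 f(\x_k)$), bounded in operator norm by $2L_1$, and have matrix variance bounded by $4L_1^2$. Applying the matrix Bernstein inequality to the average $\mathbf{H}_k = |S_1|^{-1}\sum_{i\in S_1}\nabla^2 f_i(\x_k)$ with deviation $t = \theta\sqrt\epsilon$ yields a tail of the form $2d\exp\!\big(-|S_1|t^2/(8L_1^2 + \tfrac43 L_1 t)\big)$. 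Setting this equal to a per-iteration failure probability $\delta'$ and solving for $|S_1|$ reproduces exactly the stated batch size $\big(\tfrac{8L_1^2}{\theta^2\epsilon} + \tfrac{4L_1}{3\theta\sqrt\epsilon}\big)\log(2d/\delta')$, provided we identify $\delta' = \delta\epsilon^{3/2}/2$; this identification is precisely what forces the $\epsilon^{3/2}$ inside the logarithm of the batch-size formula.

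Next I would handle the adaptivity and assemble the union bound. Because the iterate $\x_k$ is itself random (it depends on the Hessian batches drawn in earlier iterations), I would apply the concentration bound conditionally on the history: since a fresh independent batch is drawn at iteration $k$, the matrix Bernstein bound holds for the then-fixed $\x_k$ with conditional failure probability at most $\delta'$, uniformly over the realized value of $\x_k$. Taking a union bound over the at most $K = C_0\epsilon^{-3/2}$ iterations guaranteed by \Cref{tyep1_im_thm} gives that \Cref{assump_2} holds at every iterate with probability at least $1 - K\delta'$, which is at least $1-\delta$ once the constant in $\delta'$ (equivalently in $K$) is fixed. On this good event \Cref{tyep1_im_thm} applies deterministically, so the output is an $\epsilon$-second-order stationary point.

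Finally, the total Hessian sample count is $S = K\,|S_1|$, and substituting $K = O(\epsilon^{-3/2})$ lifts the $\epsilon^{-1}$ and $\epsilon^{-1/2}$ factors in $|S_1|$ to $\epsilon^{-5/2}$ and $\epsilon^{-2}$, matching the leading terms. The only remaining cosmetic step is to simplify the logarithm: since $\log(4d/(\epsilon^{3/2}\delta)) = \log(4d/(\epsilon\delta)) + \tfrac12\log(1/\epsilon) \leqslant \tfrac32\log(4d/(\epsilon\delta))$ (using $1 \leqslant 4d/\delta$), the $\epsilon^{3/2}$ can be replaced by $\epsilon$ inside the log at the cost of an absolute constant absorbed into $C$. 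The main obstacle I anticipate is not any single inequality but rather the careful bookkeeping of constants so that the matrix Bernstein tail, the choice $\delta' = \delta\epsilon^{3/2}/2$, and the deterministic iteration bound $K$ all compose to give exactly the claimed batch size and total-complexity expressions; a secondary subtlety is justifying the union bound over adaptively chosen iterates, which I resolve by the conditional fresh-batch argument above.
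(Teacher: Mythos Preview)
Your proposal is correct and follows essentially the same route as the paper: per-iteration matrix Bernstein concentration (the paper packages this as \Cref{subsample_per_iteration}), a union bound over the $K = C/\epsilon^{3/2}$ iterations from \Cref{tyep1_im_thm} with per-iteration failure $\zeta = \delta/K$, and then $S = K\cdot|S_1|$. You are in fact slightly more careful than the paper in explicitly handling the adaptivity of $\x_k$ via the conditional fresh-batch argument and in justifying the replacement of $\epsilon^{3/2}$ by $\epsilon$ inside the log.
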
  

 \Cref{total_compelxity} characterizes the total Hessian sample complexity to guarantee the convergence of CRm with high probability. This is the first such a type result for stochastic CR algorithms. Note that previous studies \cite{kohler2017,Xu2017} on subsampling CR provide only the Hessian sample complexity per iteration to guarantee inexactness condition with high probability. Our result indicates that even over the entire iteration process, the convergence is still guaranteed with high probability. In fact, if we let $N$ denote the total sample complexity, \Cref{total_compelxity} implies that the failure probability $\delta$ decays exponentially fast as the total sample complexity $N$ becomes asymptotically large. Such a result by nature is   stronger than those that characterize  the convergence only in expectation, not in (high) probability, in existing literature. 
\begin{figure*}[h]  
	\vspace{-0.3cm}
	\centering 
	\subfigure{\includegraphics[width=0.32\linewidth]{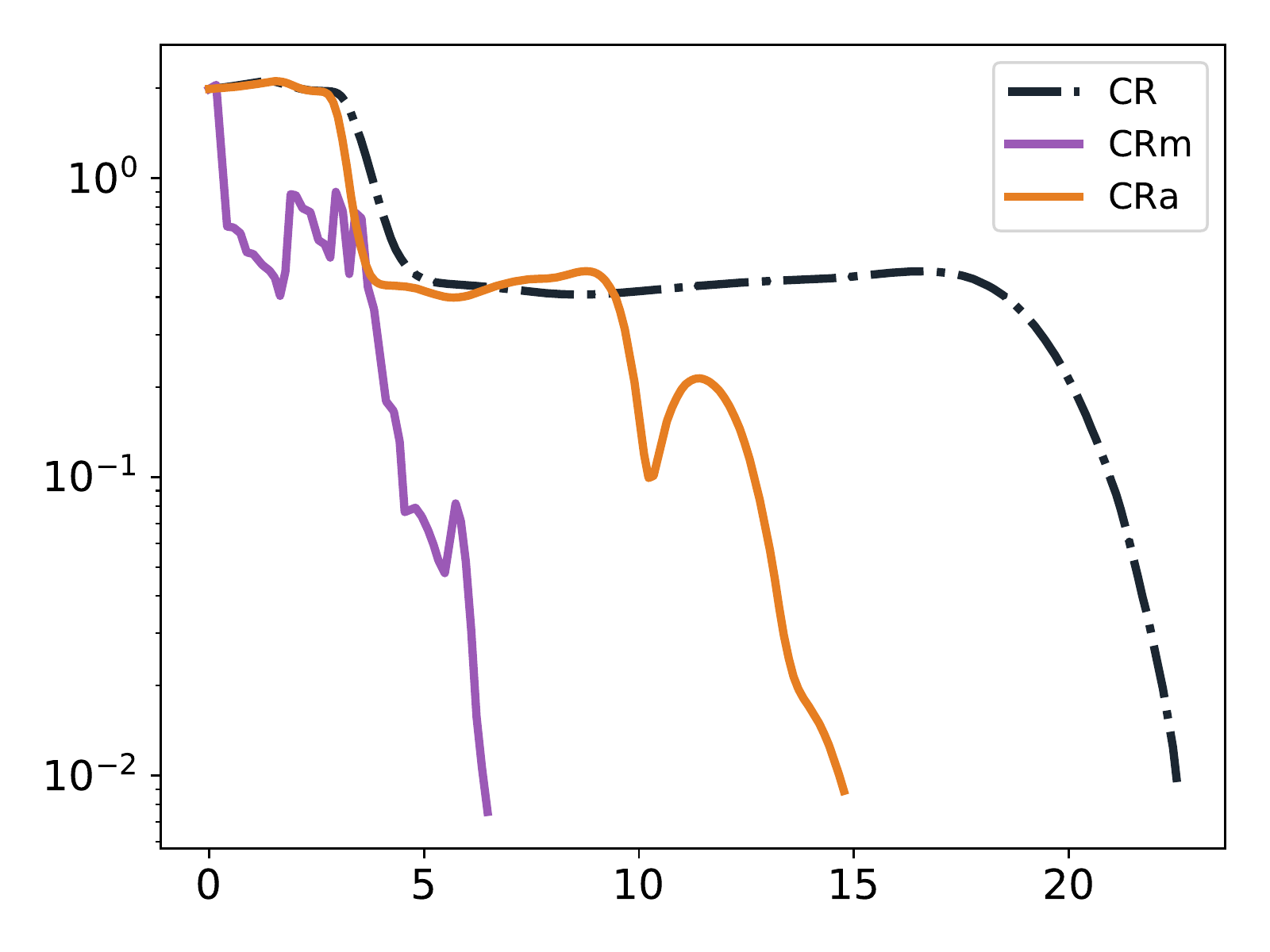}} 
	\subfigure{\includegraphics[width=0.32\linewidth]{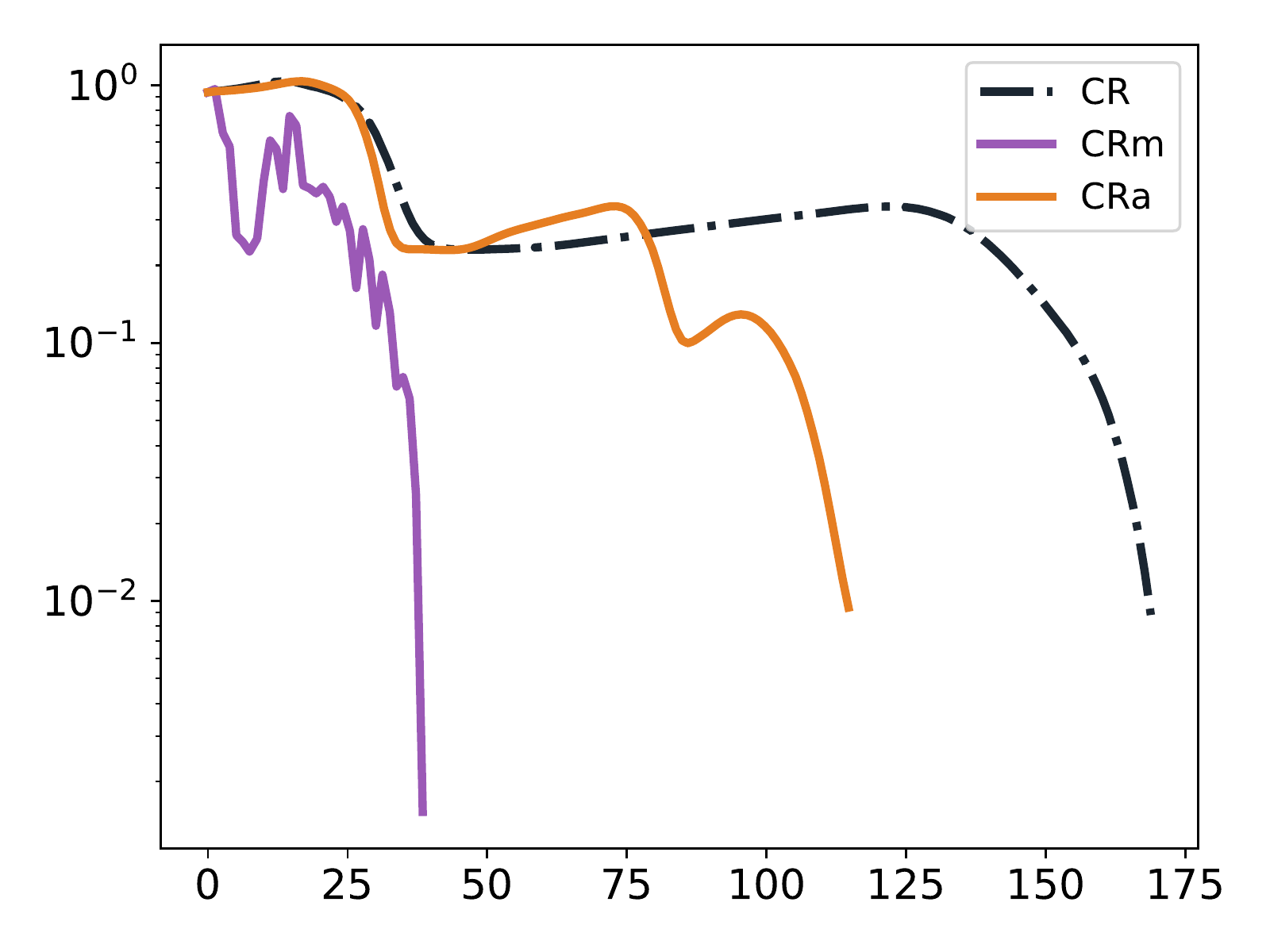}}
	\subfigure{\includegraphics[width=0.32\linewidth]{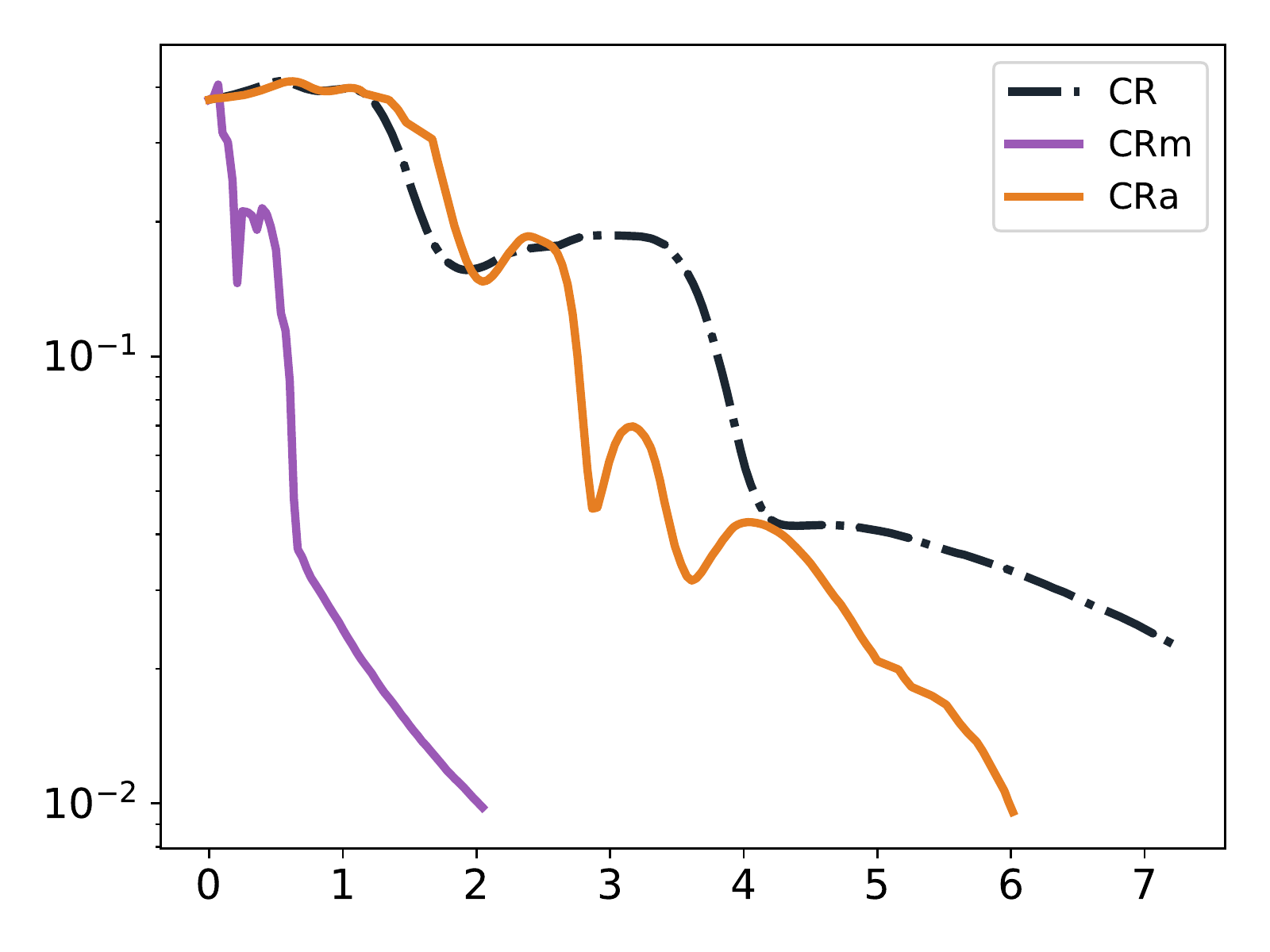}}\\
	\addtocounter{subfigure}{-3}
	\subfigure[a9a ]{\includegraphics[width=0.32\linewidth]{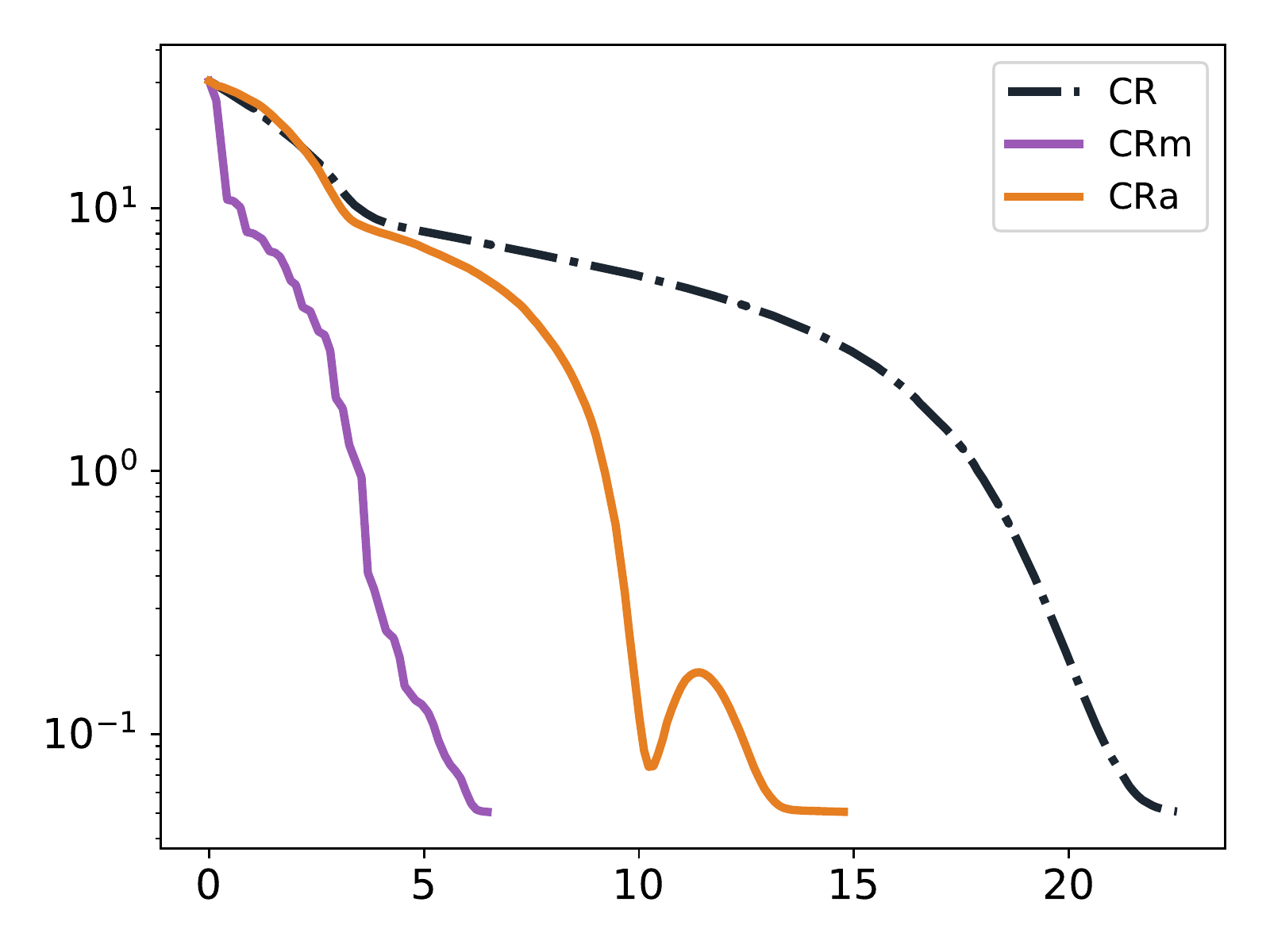}} 
	\subfigure[covtype ]{\includegraphics[width=0.32\linewidth]{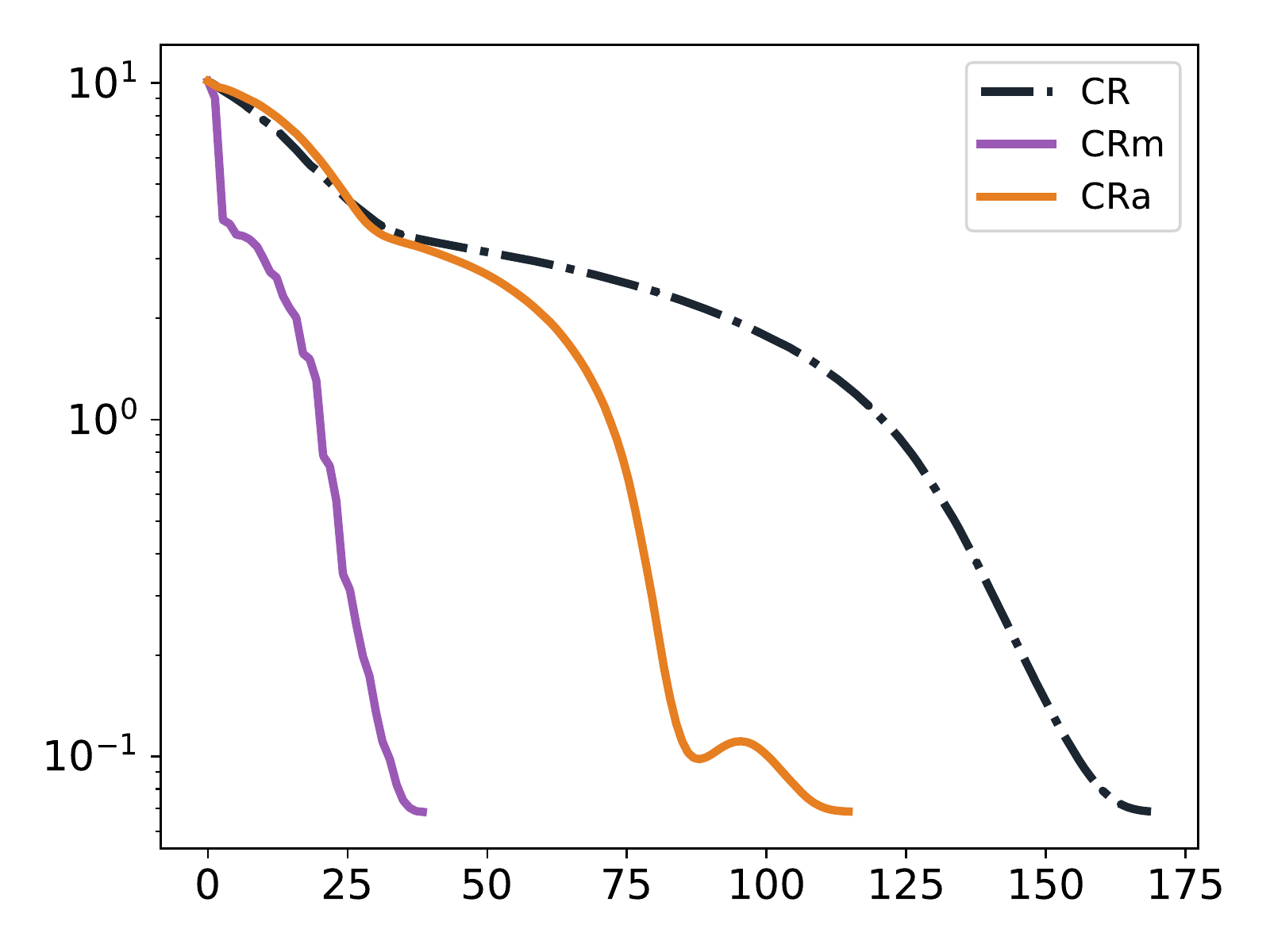}}  
	\subfigure[ijcnn1 ]{\includegraphics[width=0.32\linewidth]{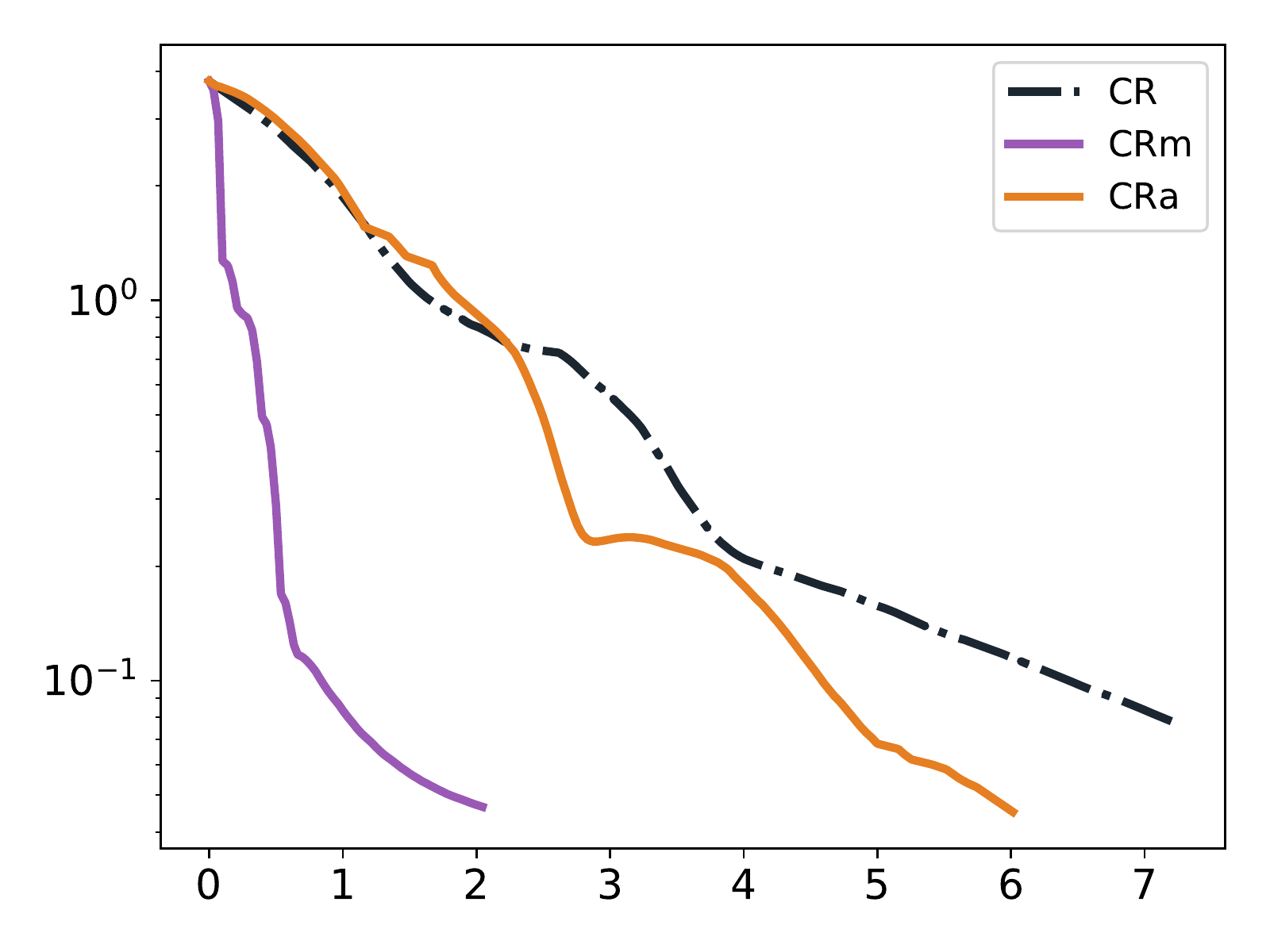}}  
	\caption{Nonconvex logistic regression. Top: gradient norm v.s. time. Bottom: function value gap v.s. time. }   \label{Experment_1}
\end{figure*}
\begin{figure*}[h]   
	\vspace{-0.3cm}
	\centering 
	\subfigure{\includegraphics[width=0.32\linewidth]{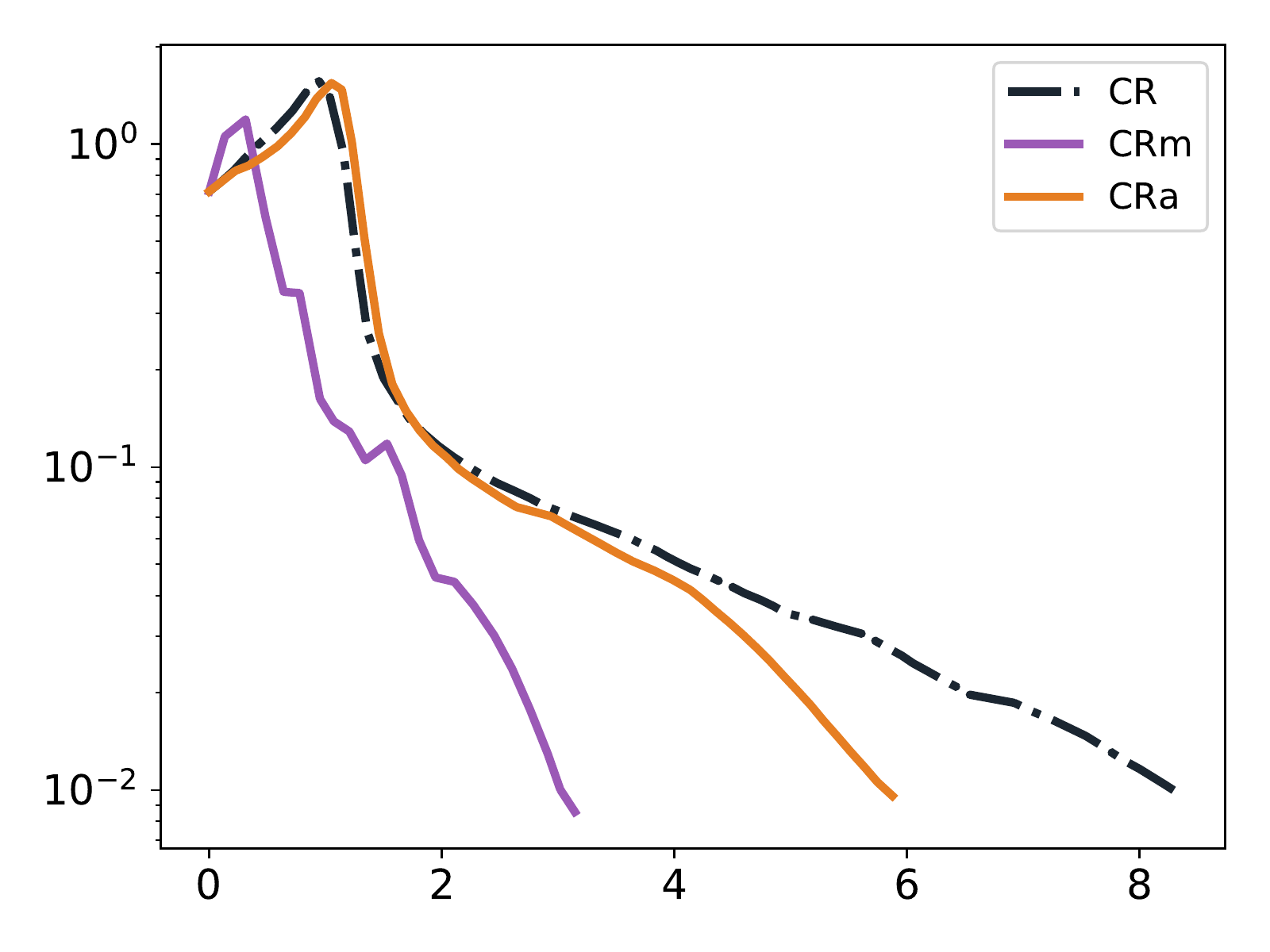}} 
	\subfigure{\includegraphics[width=0.32\linewidth]{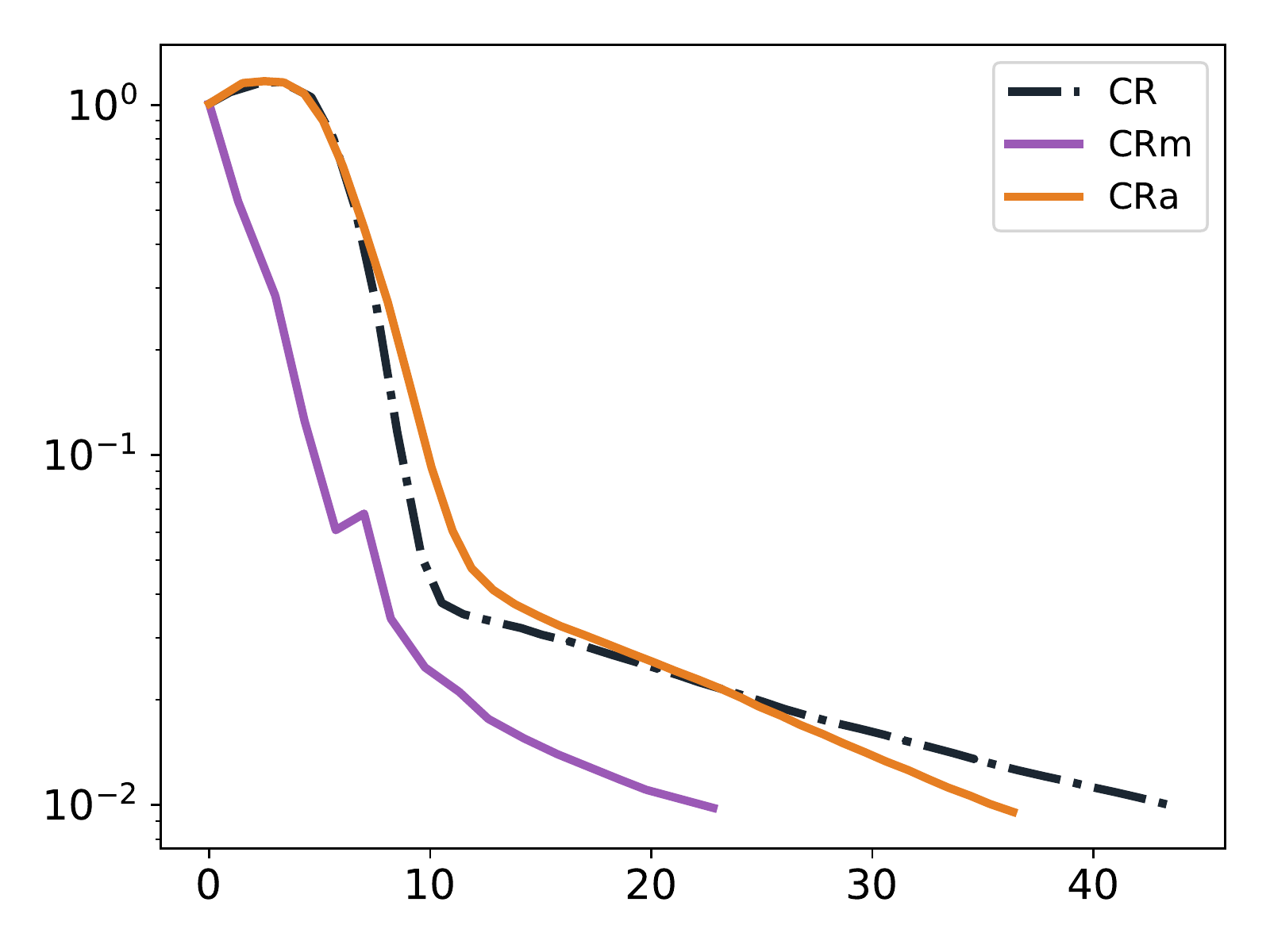}}
	\subfigure{\includegraphics[width=0.32\linewidth]{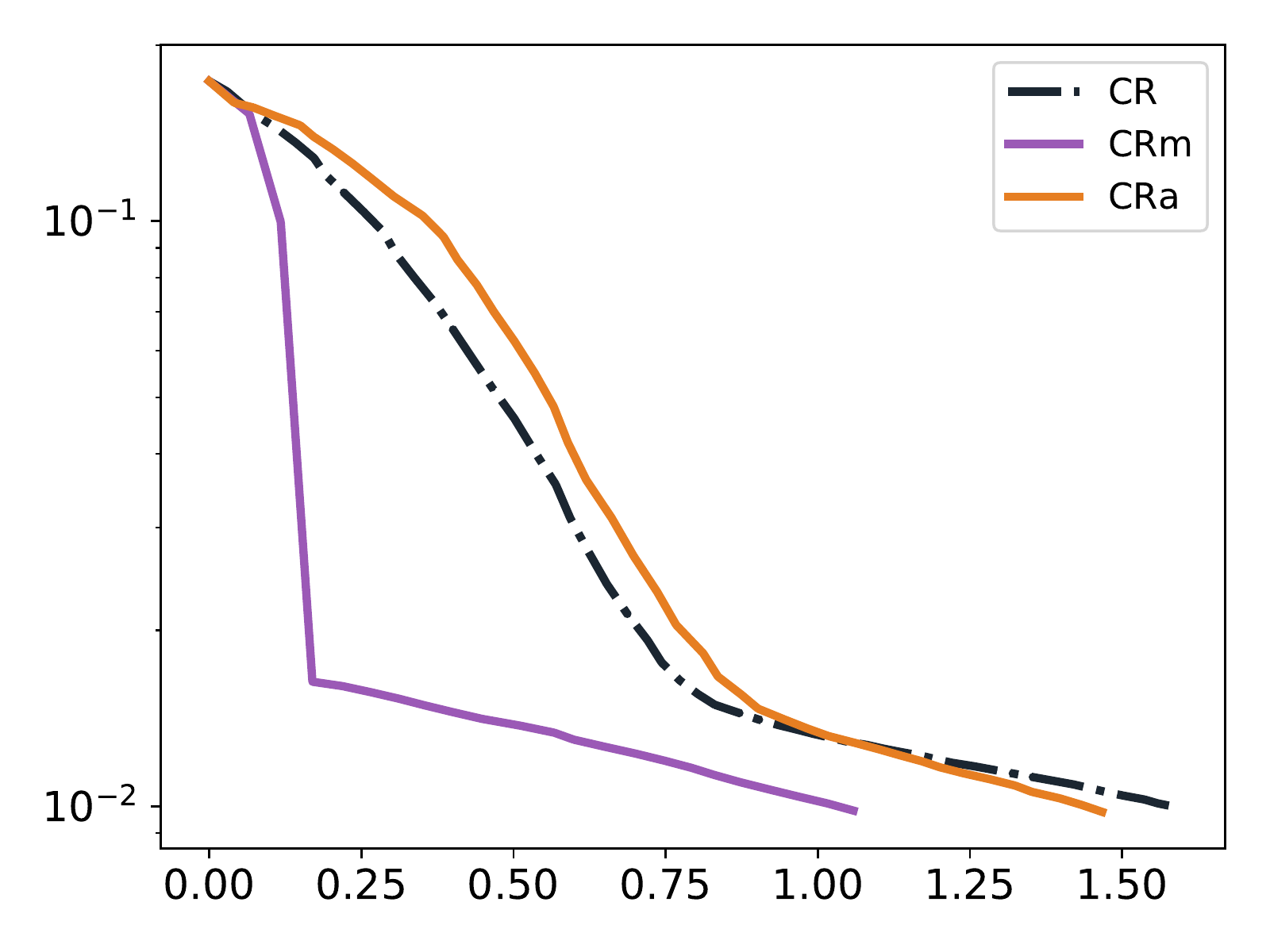}} \\
	\addtocounter{subfigure}{-3}
	\subfigure[a9a ]{\includegraphics[width=0.32\linewidth]{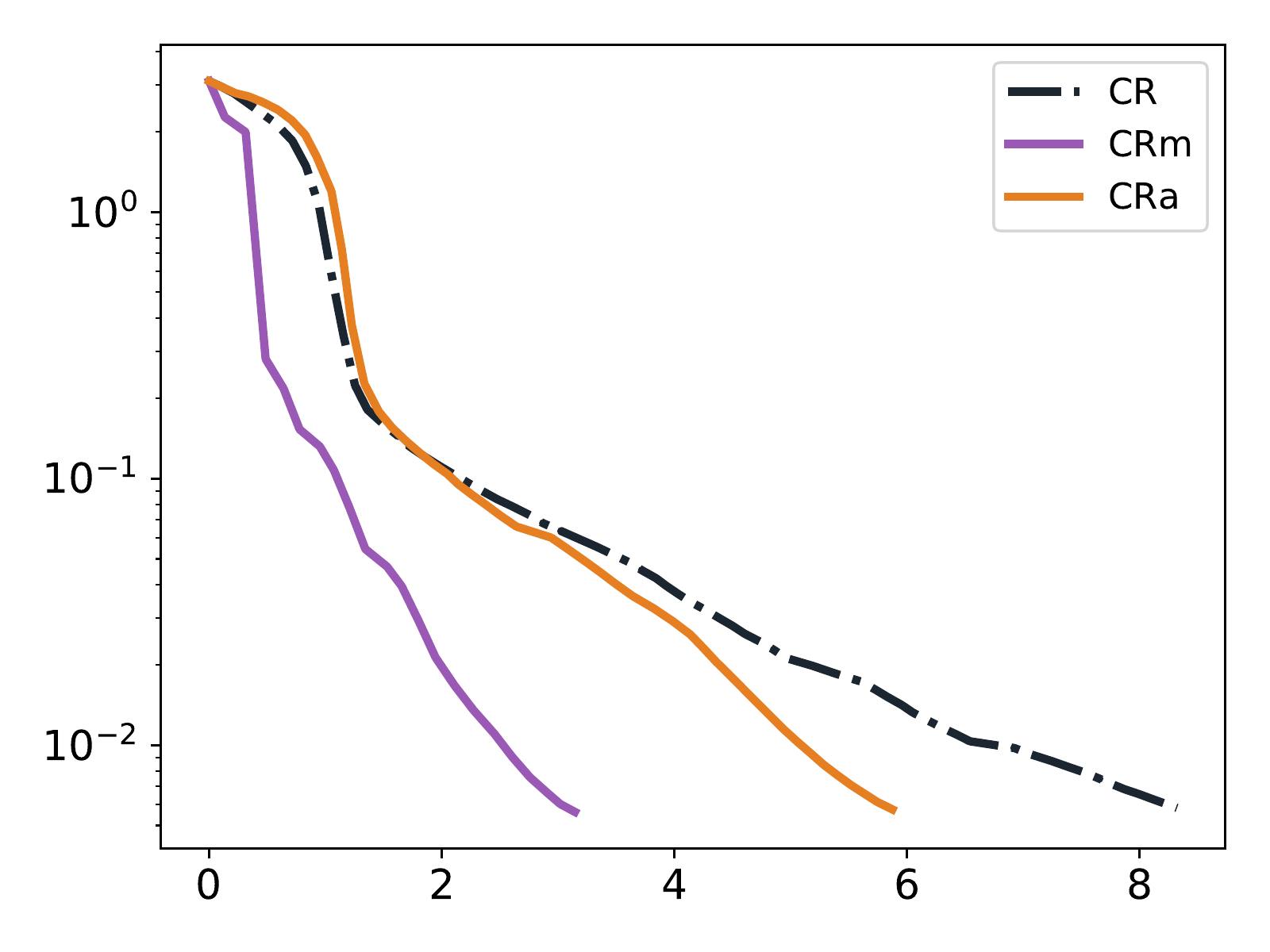}} 
	\subfigure[covtype ]{\includegraphics[width=0.32\linewidth]{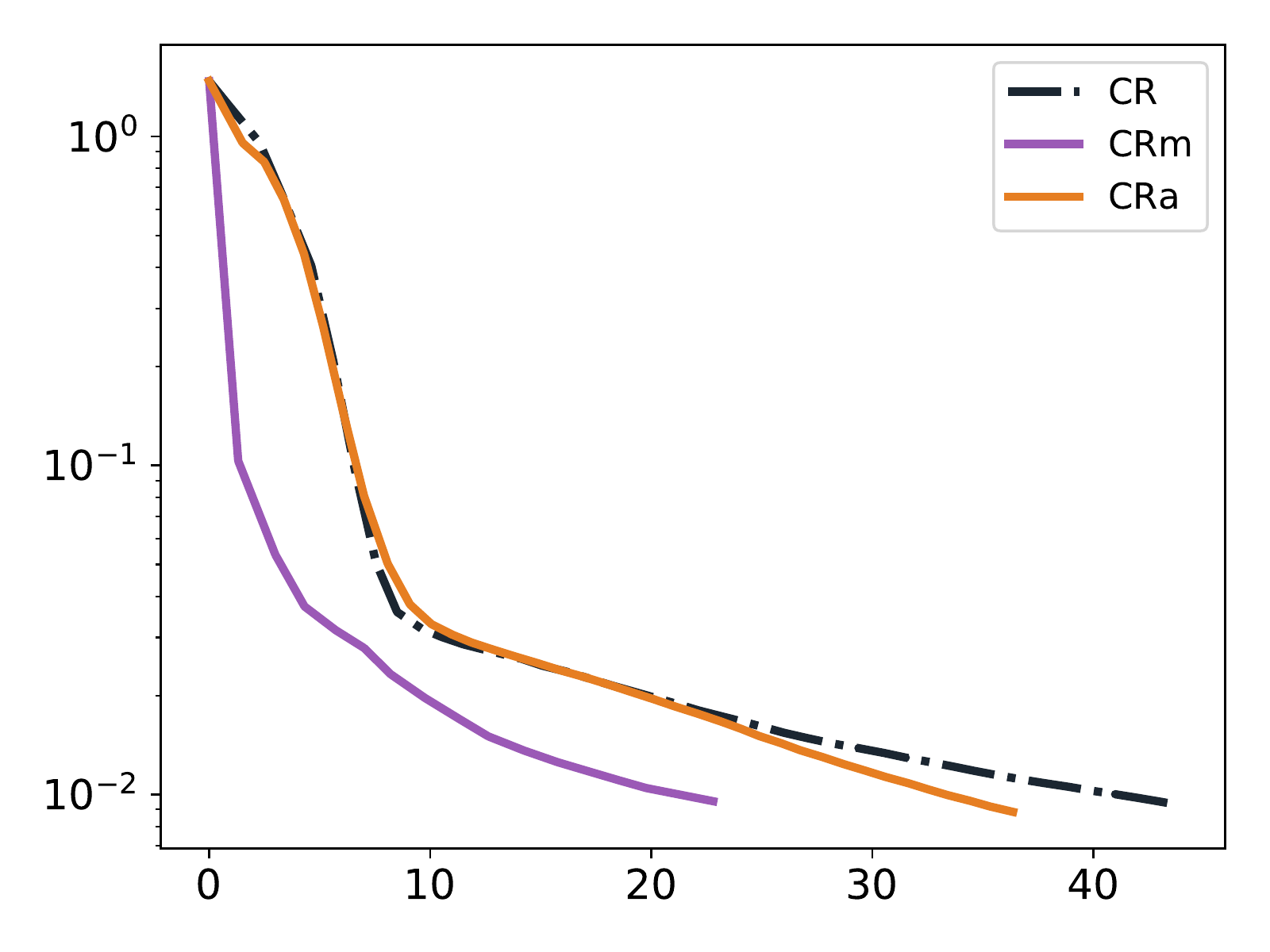}}  
	\subfigure[ijcnn1 ]{\includegraphics[width=0.32\linewidth]{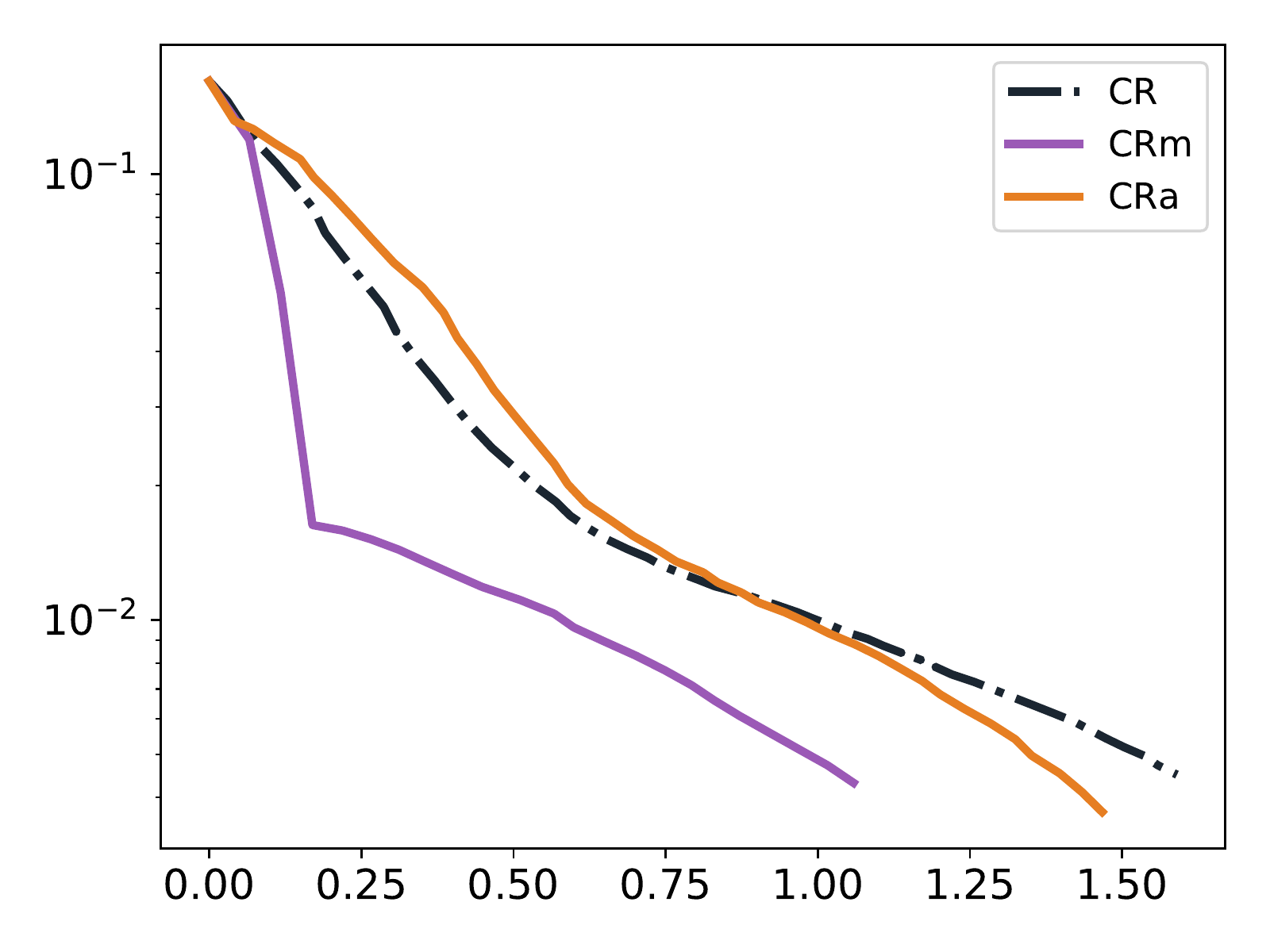}}  
	\caption{Robust linear regression. Top: gradient norm v.s. time. Bottom: function value gap v.s. time.}   \label{Experment_2}	
	\vspace{-0.3cm}
\end{figure*}  
\subsection{EFFICIENT SOLVERS FOR CUBIC STEP} \label{sub_problem_solver} 
Since the cubic step does not have a closed form solution, an inexact solver is typically used for solving the cubic step. Various solvers have been proposed to approximately solve such a subproblem. The first type of solver is based on the Lanczos method \citep{Cartis2011a,Cartis2011b}, which solves the cubic subproblem in a Krylov subspace $\mathcal{K} = \text{span} \{\nabla f(\x_k),\nabla^2 f(\x_k)\nabla f(\x_k), \cdots \}$ instead of in the entire space. Each step of the solver can be implemented efficiently with a computation cost of $O(d)$ \citep{kohler2017}. Moreover, building the subspace requires a Hessian-vector product, which introduces a cost of $O(nd)$ per additional subspace dimension for finite-sum problem with $n$ data samples. The second type of solver is proposed by \cite{Agarwal2017}, which is based on the techniques of Hessian-vector product and binary search. The proposed solver 
can find an approximate solution of the cubic subproblem with a total cost of $O(nd/\epsilon^{1/4})$ for finite-sum problems, where $\epsilon$ is the desired accuracy. \cite{Carmon2016b} proposed another solver   based on the gradient descent method. The solver finds an approximate solution of the cubic subproblem within $O(\epsilon^{-1} \log (1/\epsilon))$ iterations for large $\epsilon$ and $O(\log(1/\epsilon))$ iterations for small  $\epsilon$. 

All of these solvers can be applied to solve the cubic step in CRm. Note that the momentum and monotone steps in CRm introduce order-level less computation complexity compared to these solvers for solving the cubic step. Thus, CRm  have the same per-iteration computational complexity as CR when implementing the same solver, and have at least the same overall computational complexity as CR (in fact, much less overall computational complexity in practice as demonstrated by our experiments). As the solvers solve the cubic subproblem up to certain accuracy in practice, the total computation complexity of CRm to achieve a second-order stationary point can still be established.

\section{EXPERIMENTS}\label{sec: exp}   

\subsection{SETUP}

We compare the performance among the following six algorithms: cubic regularization algorithm (\textbf{CR}) in \cite{Nesterov2006}, accelerated cubic regularization algorithm (\textbf{CRa}) in \cite{Nesterov2008} (whose convergence guarantee has not been established), cubic regularization algorithm (\textbf{CRm}) with  momentum, cubic regularization algorithm with inexact Hessian (\textbf{CR\_I}), accelerated cubic regularization with inexact Hessian (\textbf{CRa\_I}), CRm with  inexact Hessian (\textbf{CRm\_I}).  In this section, we present the comparison among the  three exact algorithms. The comparisons  among the  inexact variants are presented in \Cref{Add_Experiment} due to space limitation. The details of the experiment settings can also be found  in \Cref{Add_Experiment}.

We conduct two experiments. The first experiment solves the following logistic regression problem with a nonconvex regularizer
\begin{align*}
\min_{\w \in \mathbb{R}^d} &-\left(  \frac{1}{n} \sum_{i=1}^{n} y_i  \log \left(\frac{1}{1 + e^{-\w^T\x}}\right) \right.\\
&\left.+( 1 - y_i) \log \left(\frac{e^{\w^T\x}}{1 + e^{-\w^T\x}} \right) \right) + \alpha \sum_{i=1}^{d} \frac{ w_i^2}{1  + w_i^2 },
\end{align*}
where we set $\alpha = 0.1$ in our experiment. The second experiment solves the following nonconvex robust linear regression problem
\begin{align}
\min_{\w \in \mathbb{R}^d} \frac{1}{n} \sum_{i=1}^{n}  \eta(y_i - \w^T\x_i), 
\end{align}
where $\eta (x) =\log (\frac{x^2}{2} + 1)$. Each experiment is performed over three datasets, i.e., a$9$a, covtype, and ijcnn \citep{Chang_2011}.

\subsection{RESULTS} 
\Cref{Experment_1,Experment_2} show the results of the two experiments for comparing the three exact algorithms, respectively. From both figures, it can be seen that CRm outperforms the vanilla CR, which demonstrates that the momentum step in CRm significantly accelerates the CR algorithm for nonconvex problems.  Also, CRm  outperforms CRa in the experiments with datasets a9a and covtype, while its performance is comparable to CRa in the experiments with dataset ijcnn1. Thus, our proposed momentum step achieves a faster convergence than the Nesterov's acceleration scheme for CR.

We note that similar comparisons are observed in the comparison of the corresponding three inexact variants of the algorithms (see \Cref{inexact_1,inexact_2} in \Cref{Add_Experiment}), i.e., our momentum scheme  with inexact Hessian outperforms other inexact CR algorithms.  We also note that all inexact variants of the algorithms outperform their exact counterparts (see \Cref{ie_1,ie_2} in \Cref{Add_Experiment}). Hence, the inexact implementation plays an important role in reducing the computation complexity of these CR type of algorithms in practice.


\section{CONCLUSION}
In this paper, we proposed a momentum scheme  to accelerate the cubic regularization algorithm. We showed that the order of the global convergence rate of the proposed algorithm CRm is at least as fast as its vanilla version. We also established the local quadratic convergence property for the proposed algorithm, and extended our analysis for the proposed algorithm  to the inexact Hessian case and established the total Hessian sample complexity to guarantee the convergence with high probability. We further conducted various experiments to demonstrate  the advantage of applying momentum for accelerating the cubic regularized algorithm.  

\section*{Acknowledgement}

The work of Z. Wang, Y. Zhou and Y. Liang was supported in part by the U.S. National Science Foundation under the grant CCF-1761506,  and the work of G. Lan was supported in part by Army Research Office under the grant  W911NF-18-1-0223 and U.S. National Science Foundation under the grant  CMMI-1254446.

\bibliographystyle{apalike} 
\bibliography{ref}

\onecolumn 
\newpage
\appendix
\noindent {\Large \textbf{Supplementary Materials}}

We first note that in order for a clear illustration, we use a slightly difference notation such that $\vb_{k+1}$ is replaced by $\tilde{ \x}_{k+1}$ and $\y_{k+1}$ is replaced by $\hat{ \x}_{k+1}$. 
\section{Experiment Setting and Additional Result} \label{Add_Experiment}
\textbf{Parameters Setting:} The experiment specifications are as follows. For all algorithms, the subproblem in each iteration of the cubic step is solved by the Lanczos-type method as suggested by \cite{Cartis2011a}. We set the parameter $M = 10$ through all the experiments. The momentum parameter $\beta_{k+1}$ in CRm  is set to be $8\times \norml{\hat{\x}_{k+1} - \x_k} $.  Although in theory of CRm, we require $\beta_{k+1} < \min\{\rho , \norml{\nabla f(\hat{\x}_{k+1})}, \norml{\hat{\x}_{k+1} - \x_k}  \} $, in practice, a more relaxed value works well in our experiments. The initial point is set to be an all-two-vector for all datasets for the logistic regression problem and an all $0.5$ vector for all datasets for the robust linear regression problem.

\subsection{Comparison Among Inexact Algorithms}
In this subsection, we present the comparison among the three  algorithms with inexact Hessian. Namely,  vanilla cubic regularized algorithm with inexact Hessian (\textbf{CR\_I}), Nesterov accelerated cubic regularization algorithm with inexact Hessian (\textbf{CRa\_I}), and the proposed CRm with  inexact Hessian (\textbf{CRm\_I}). For the  implementation, since we solve  a finite-sum problem $f(\x) =  \sum_{i=1}^{n} f_i(\x)$, we draw a mini-batch of data samples as  the estimated Hessian, given by $\mathbf{H_k} =  \sum_{i \in S_k} \nabla^2 f_i(\x_k)/|S_k|
$. We take $n/20$ as the batch size  in the logistic regression problem and $n/5$ in the robust linear regression problem.  The results are shown in \Cref{inexact_1,inexact_2}. The performance comparison among the four algorithms with inexact Hessian  is similar to that of their exact cases. However, we should note that the   time used by inexact algorithms is much less than that of their  corresponding exact cases.
\begin{figure} [h]
	\vspace{-0.3cm}
	\centering 
	\subfigure{\includegraphics[width=0.32\linewidth]{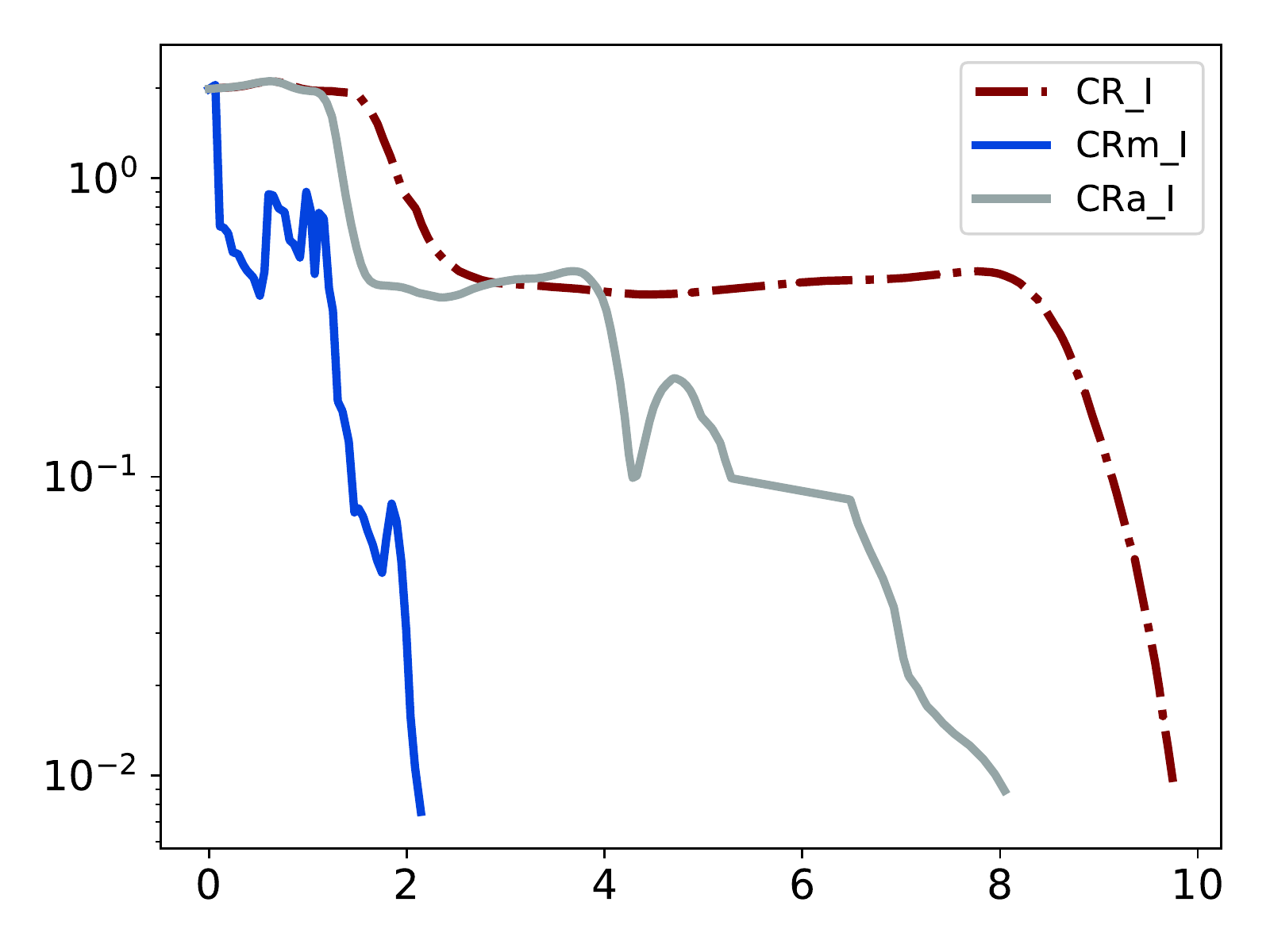}} 
	\subfigure{\includegraphics[width=0.32\linewidth]{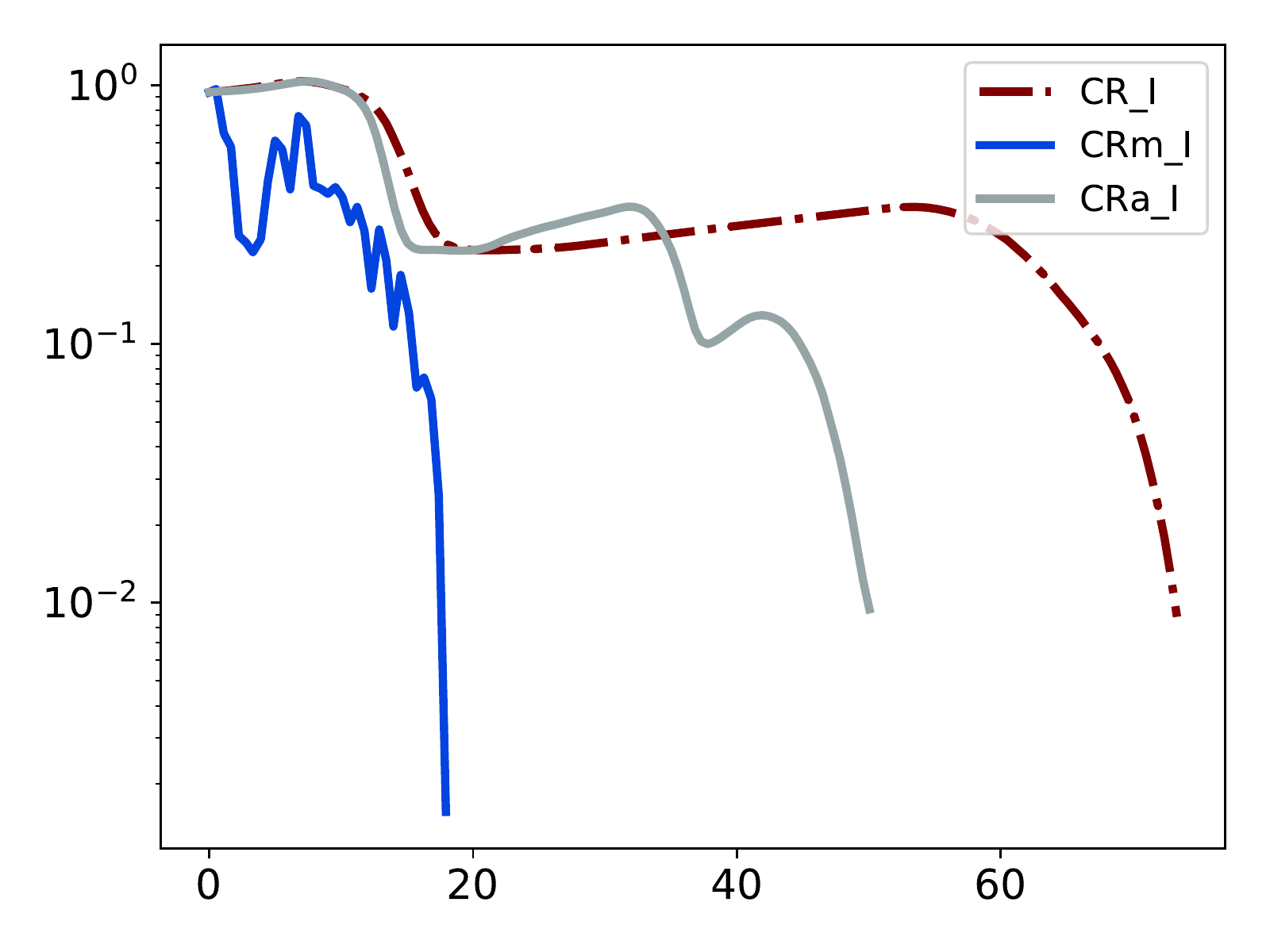}}
	\subfigure{\includegraphics[width=0.32\linewidth]{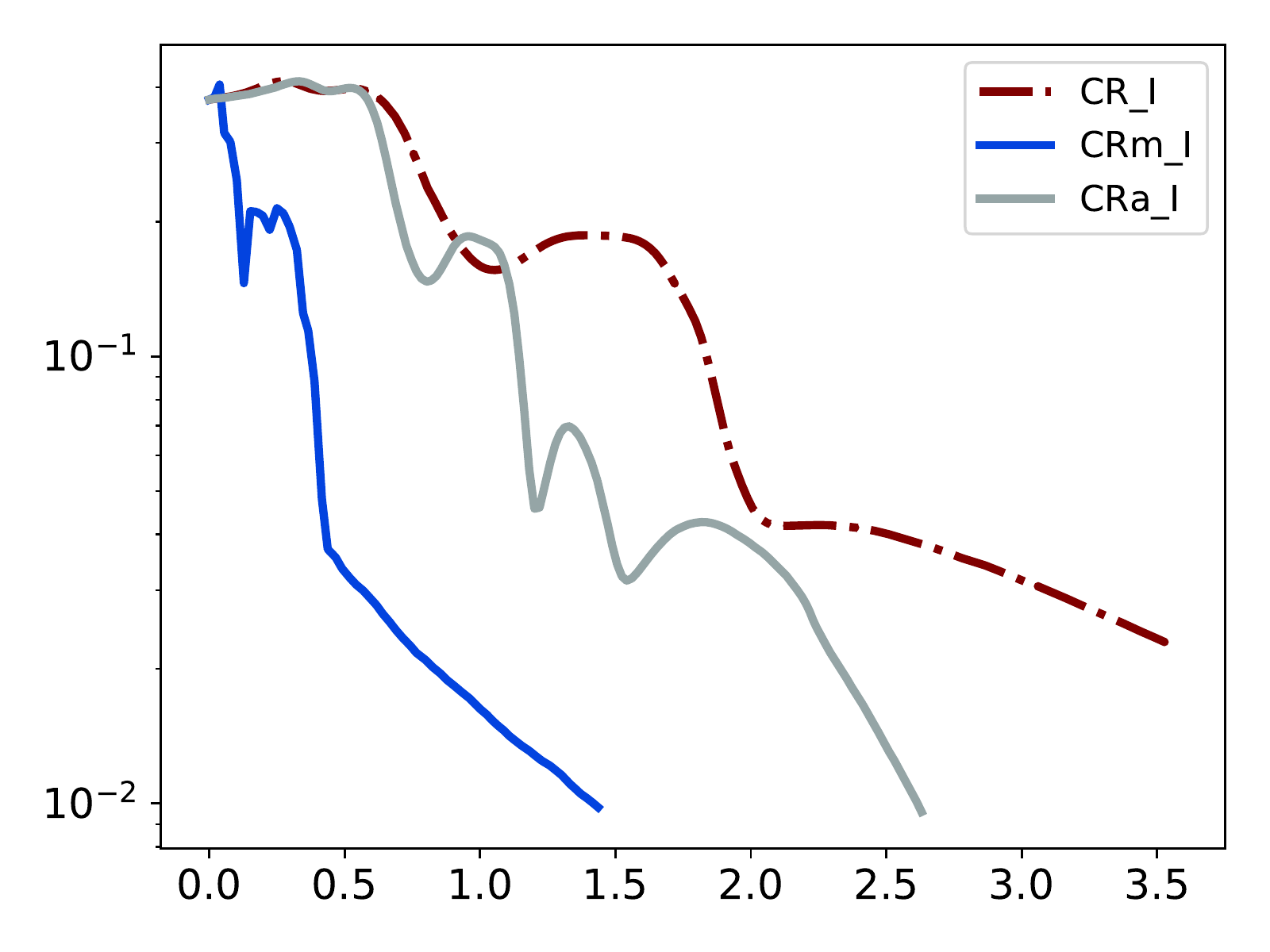}}
	\addtocounter{subfigure}{-3}
	\subfigure[a9a ($n = 32651, d =123$)]{\includegraphics[width=0.32\linewidth]{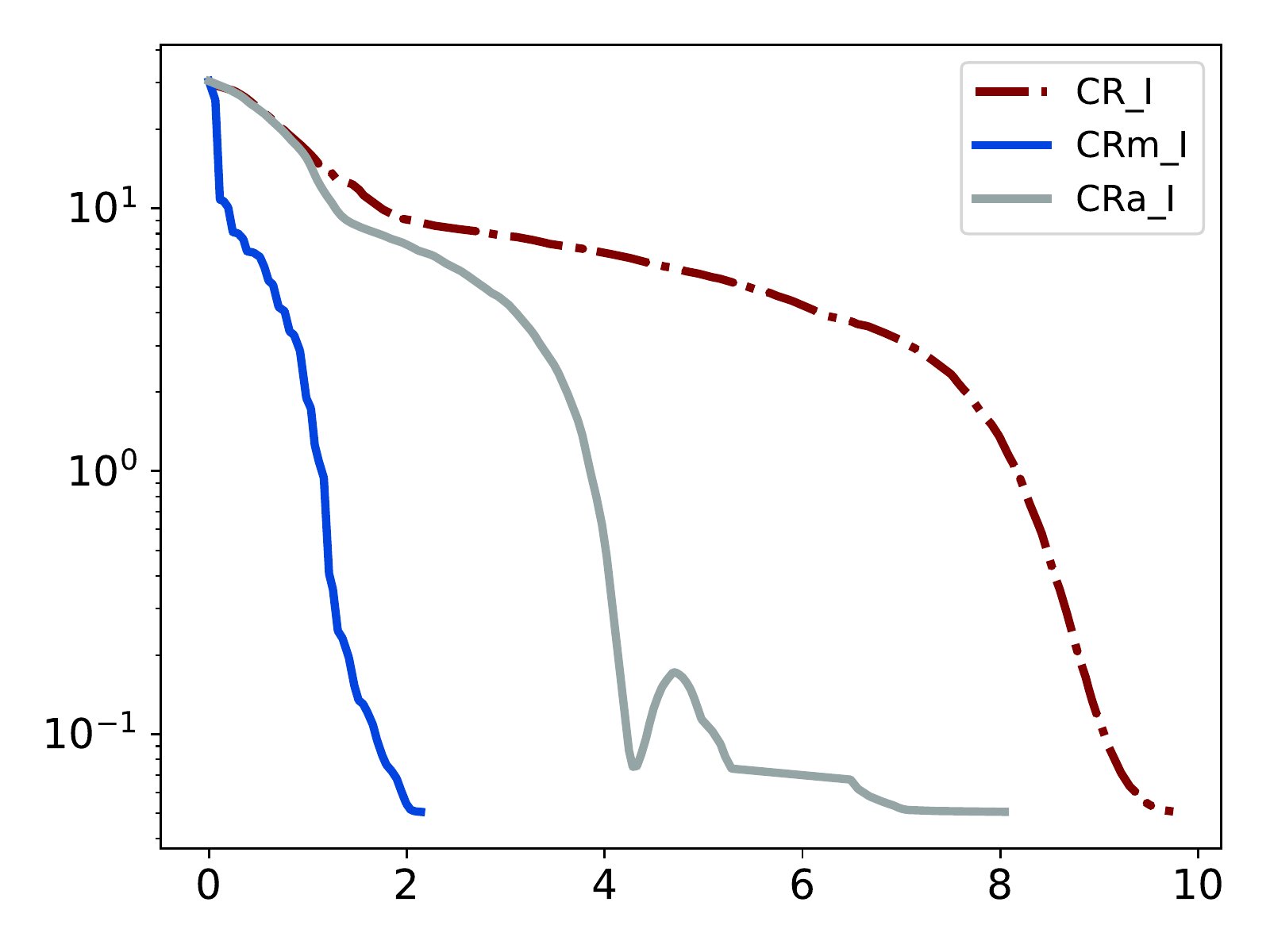}} 
	\subfigure[covtype ($n = 581012, d=54$)]{\includegraphics[width=0.32\linewidth]{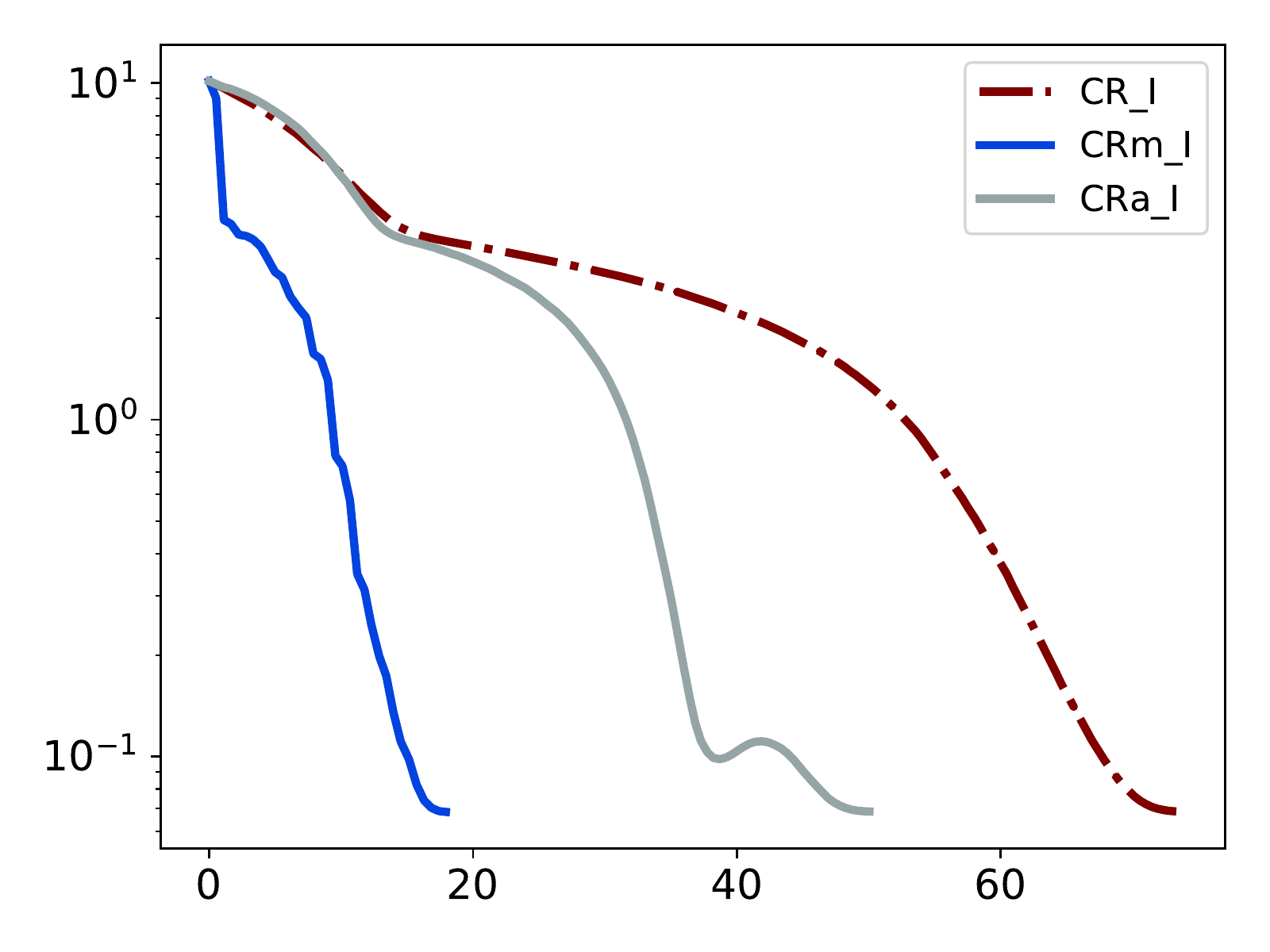}}  
	\subfigure[ijcnn1($n = 35000, d =22$)]{\includegraphics[width=0.32\linewidth]{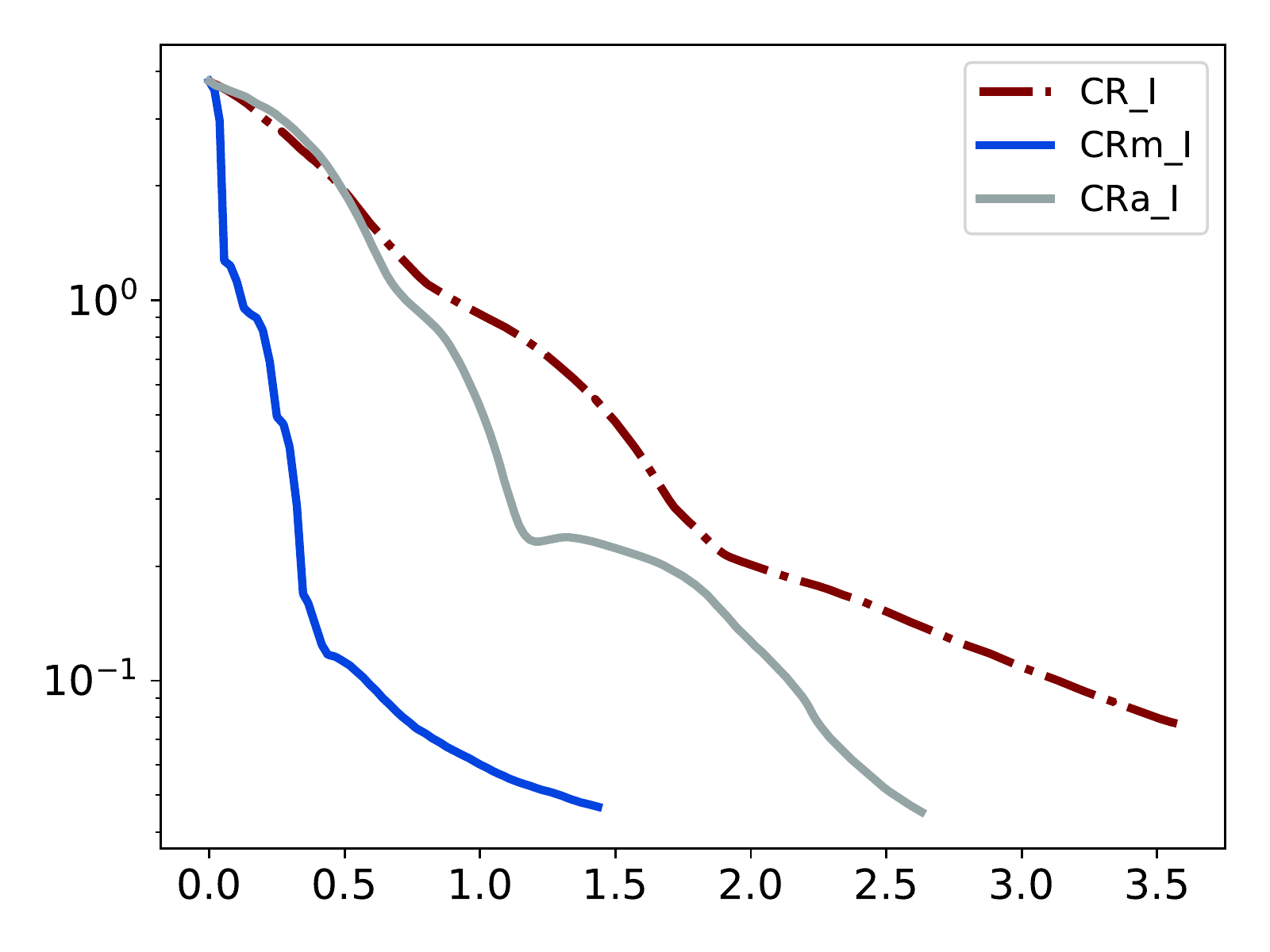}}  
	\caption{Logistic regression loss with a  nonconvex regularizer: The top row presents the gradient norm versus runtime. The bottom row presents the function value gap versus runtime. }    \label{inexact_1}
\end{figure}
\begin{figure}   
	\vspace{-0.3cm}
	\centering 
	\subfigure{\includegraphics[width=0.32\linewidth]{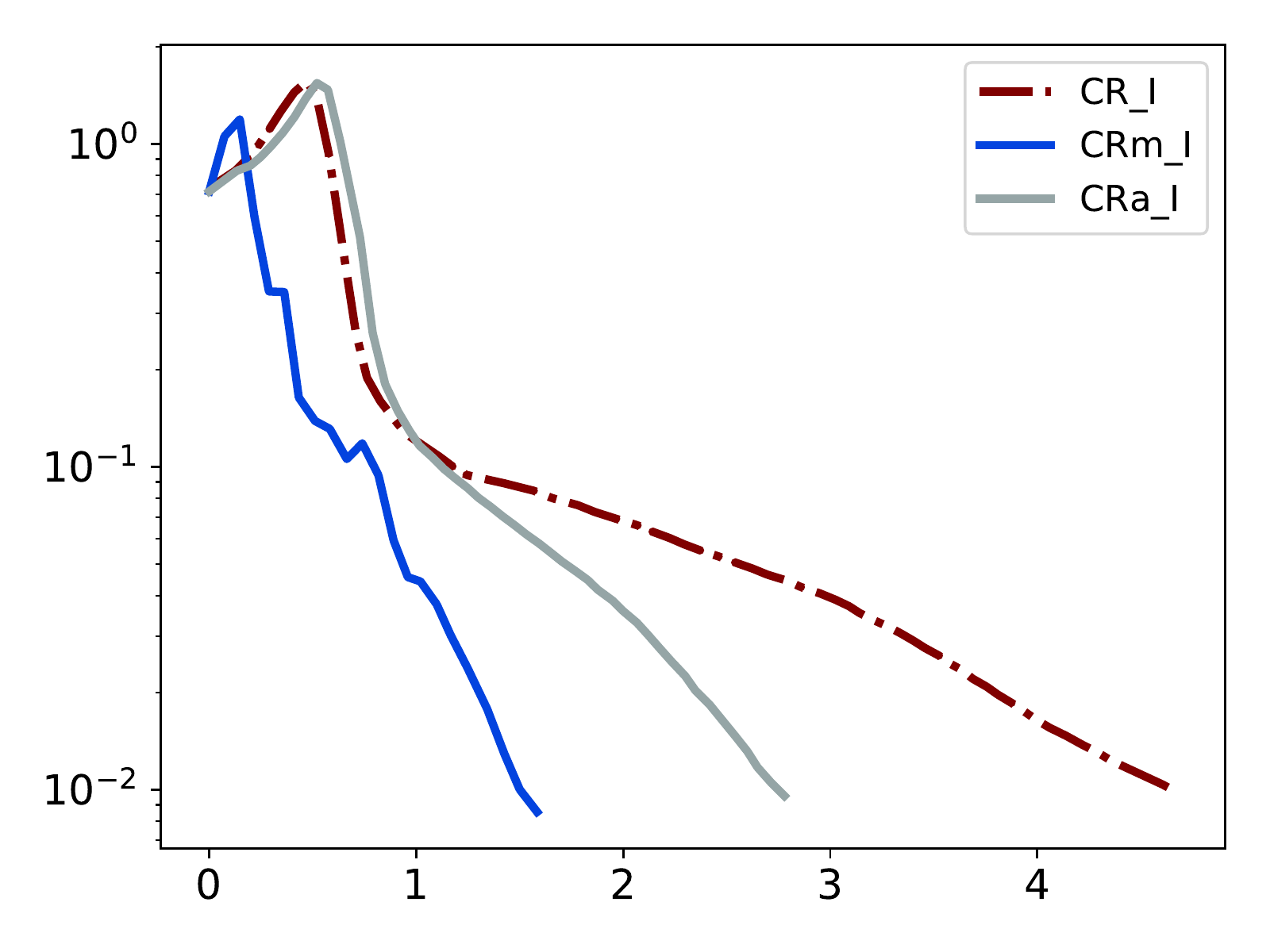}} 
	\subfigure{\includegraphics[width=0.32\linewidth]{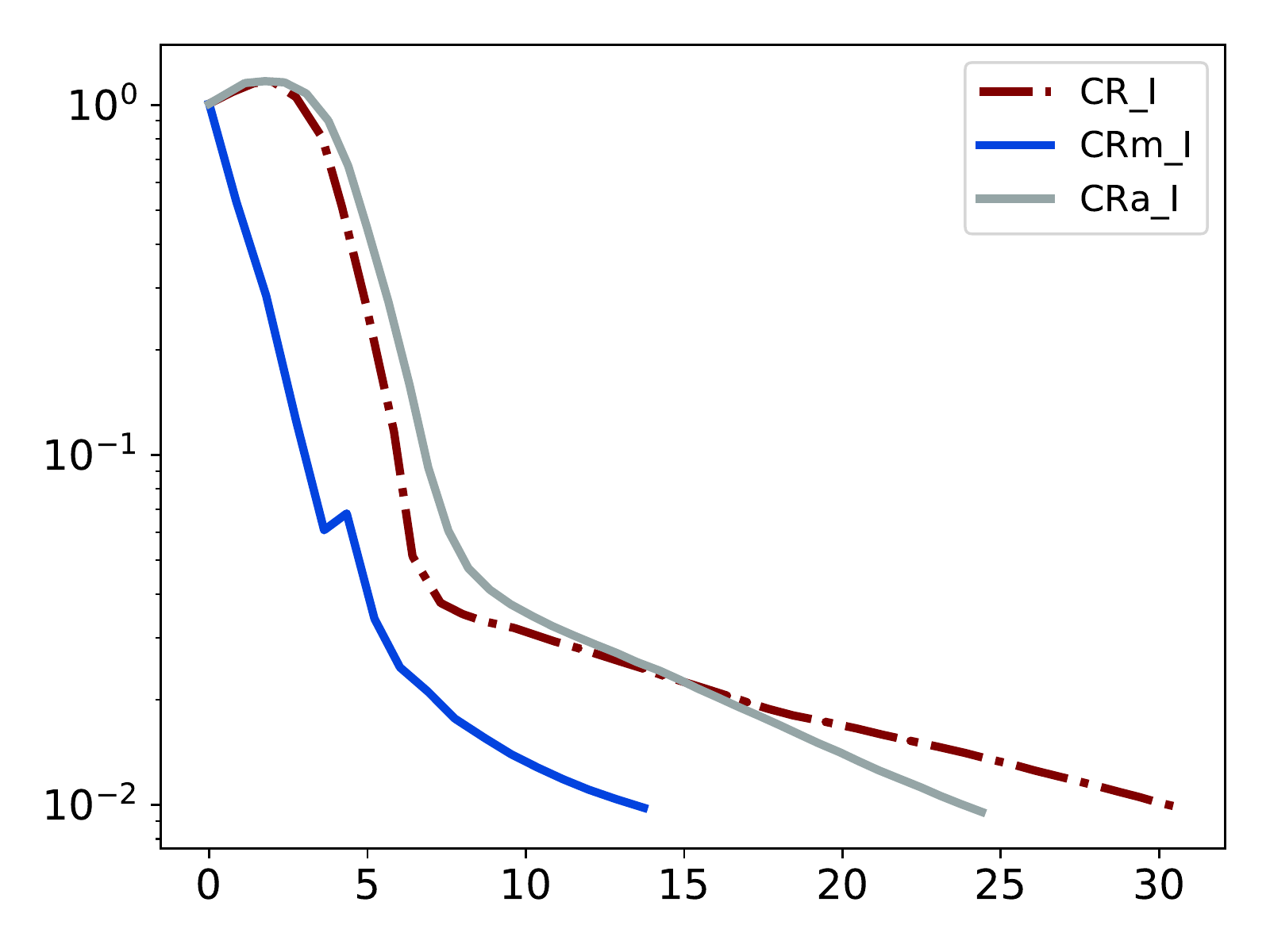}}
	\subfigure{\includegraphics[width=0.32\linewidth]{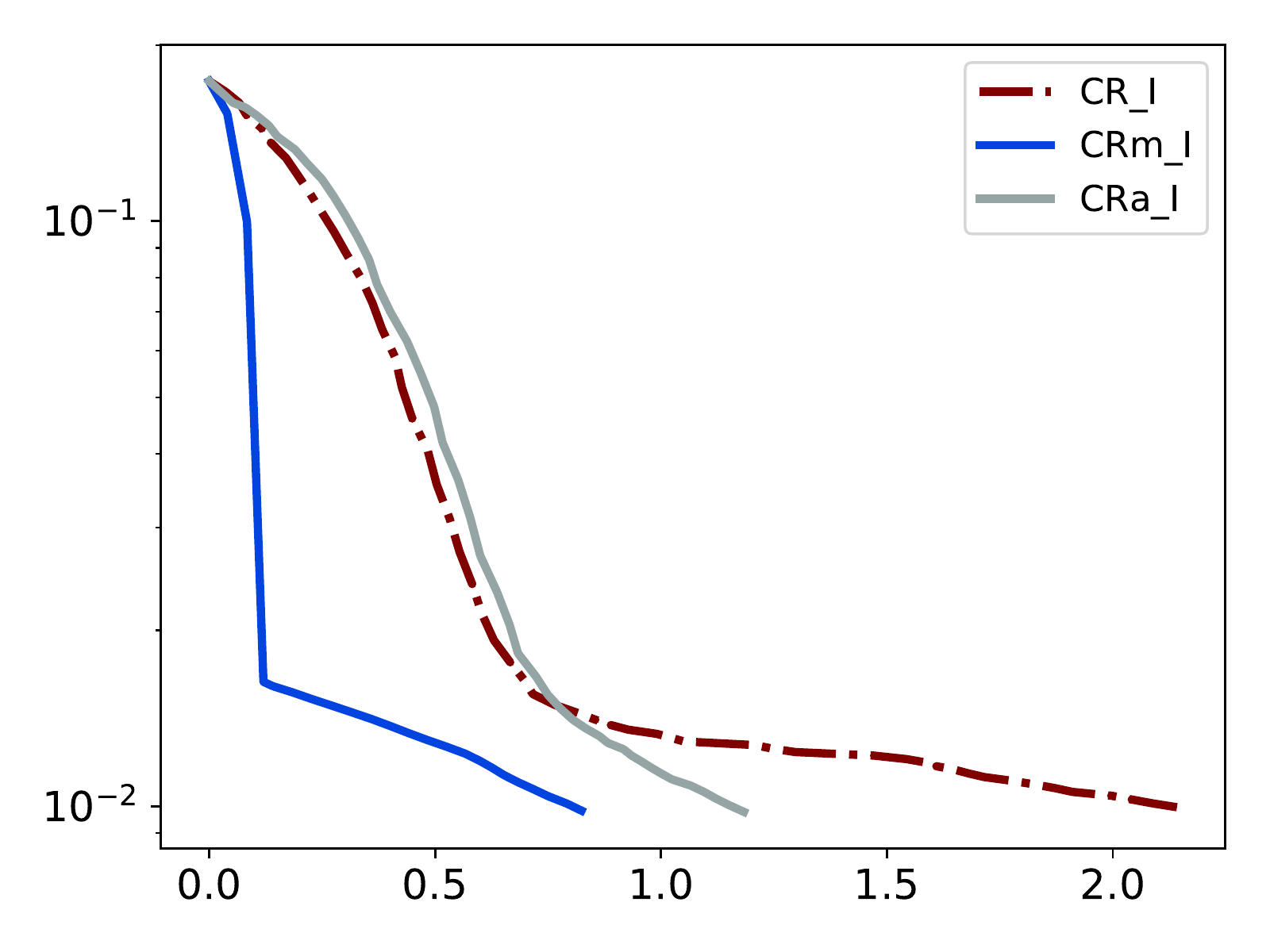}} 
	\addtocounter{subfigure}{-3}
	\subfigure[a9a($n = 32651, d =123$)]{\includegraphics[width=0.32\linewidth]{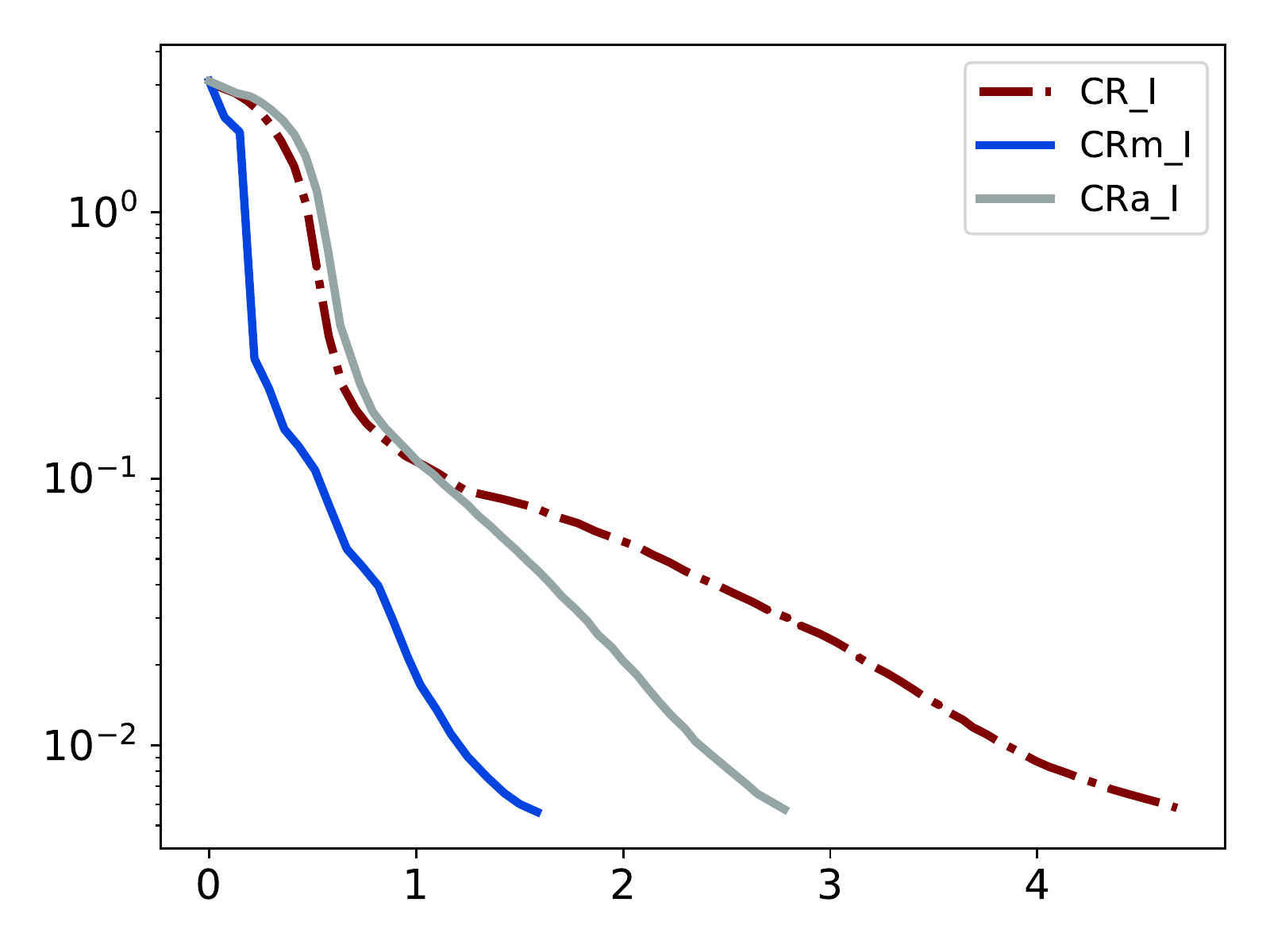}} 
	\subfigure[covtype($n = 581012, d =54$)]{\includegraphics[width=0.32\linewidth]{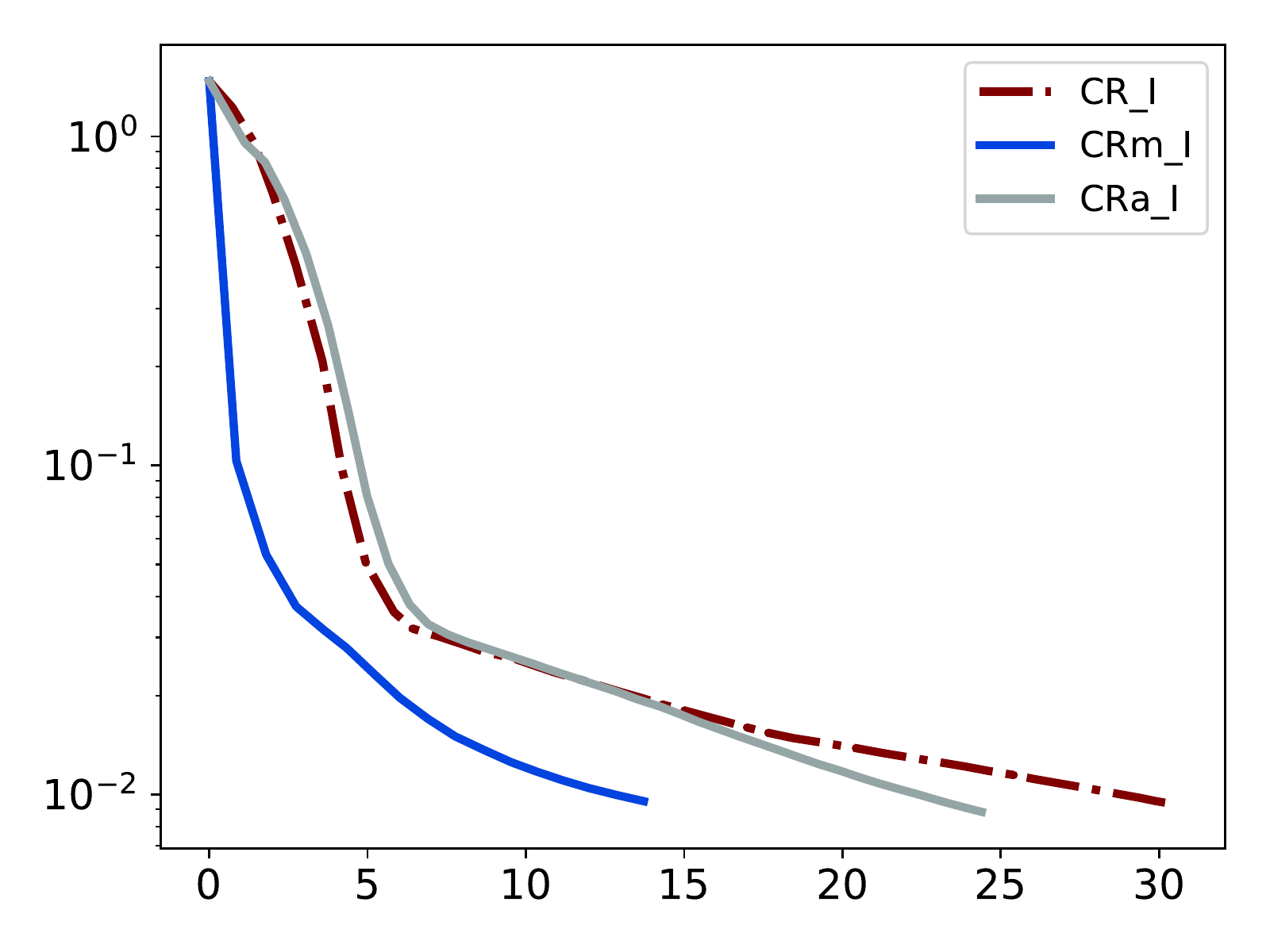}}  
	\subfigure[ijcnn1($n = 35000, d =22$)]{\includegraphics[width=0.32\linewidth]{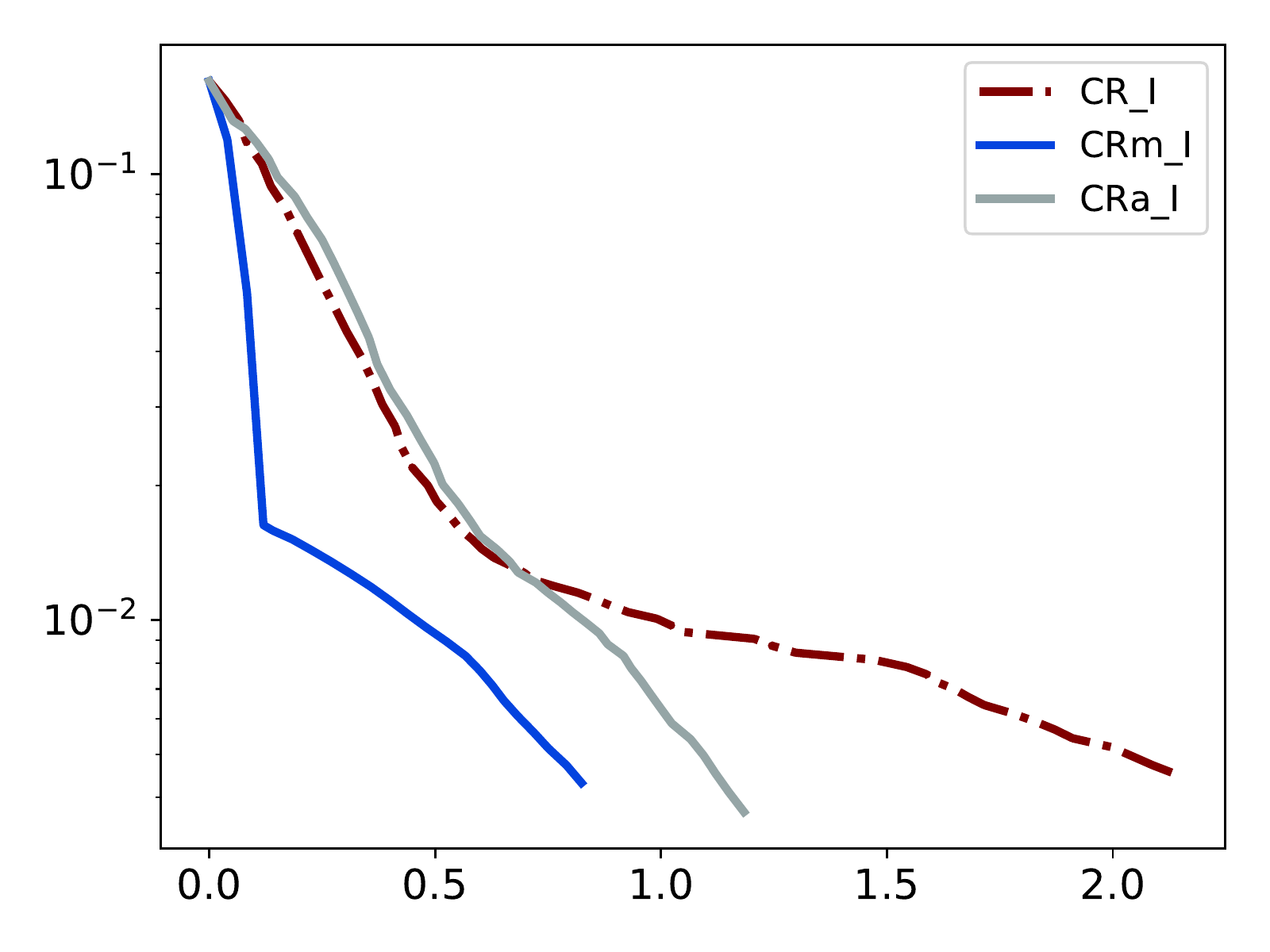}}  
	\caption{Robust linear regression loss: The top row presents the gradient norm versus runtime. The bottom row presents the function value gap versus runtime.}   
	\vspace{-0.3cm}  \label{inexact_2}
\end{figure} 

\subsection{Comparison between  Exact and Inexact Algorithms} \label{ex_inexact_exact}
In this subsection, we present the comparison between the algorithms with exact Hessian  and the algorithms with inexact Hessian.  The results are shown in \Cref{ie_1,ie_2}. It is clear that all inexact algorithms significantly outperform their corresponding exact algorithms. This demonstrates the efficiency of the inexact Hessian technique for second-order algorithms. 
\begin{figure}[H]
	\vspace{-0.3cm}
	\centering 
	\subfigure{\includegraphics[width=0.32\linewidth]{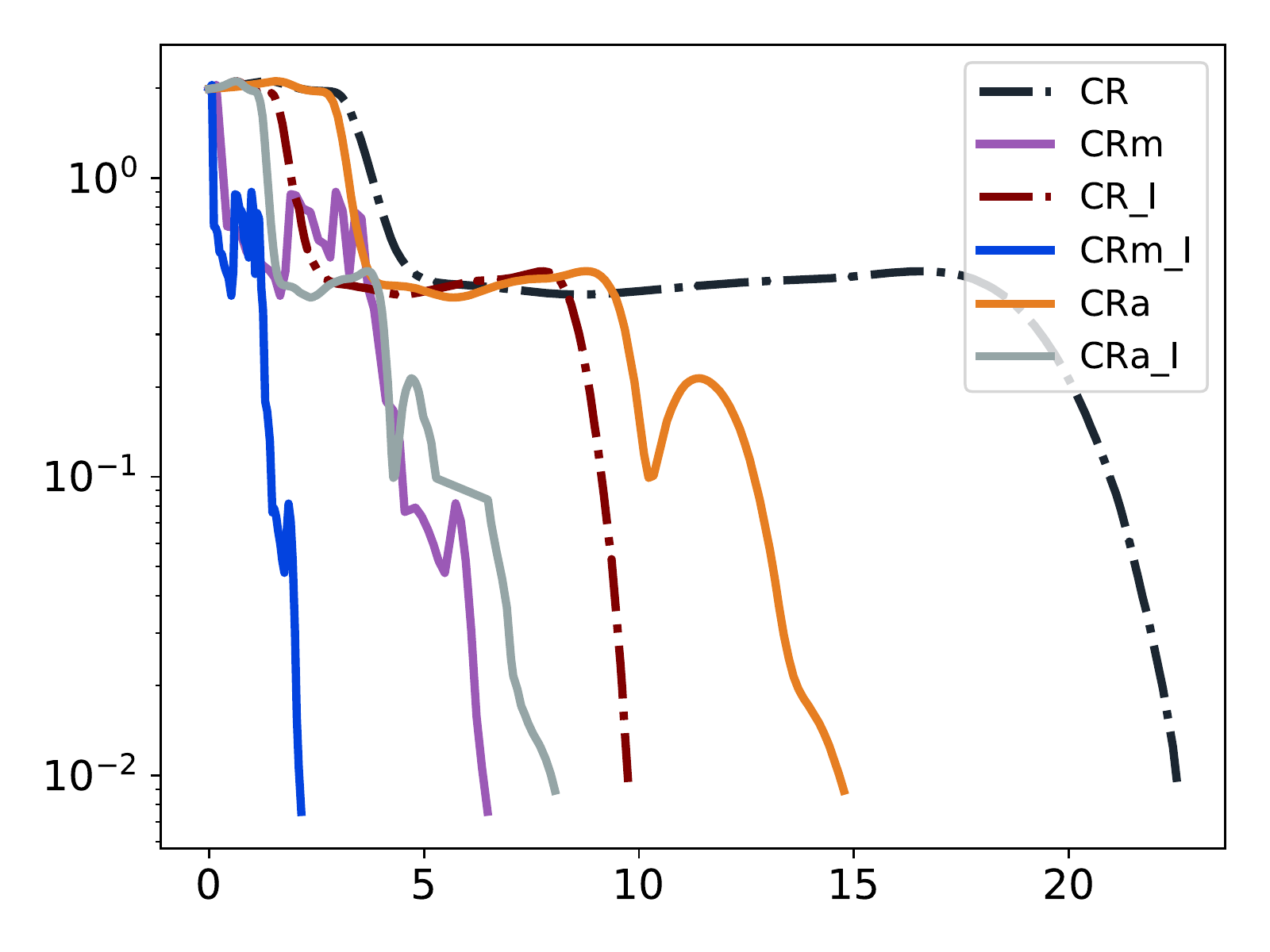}} 
	\subfigure{\includegraphics[width=0.32\linewidth]{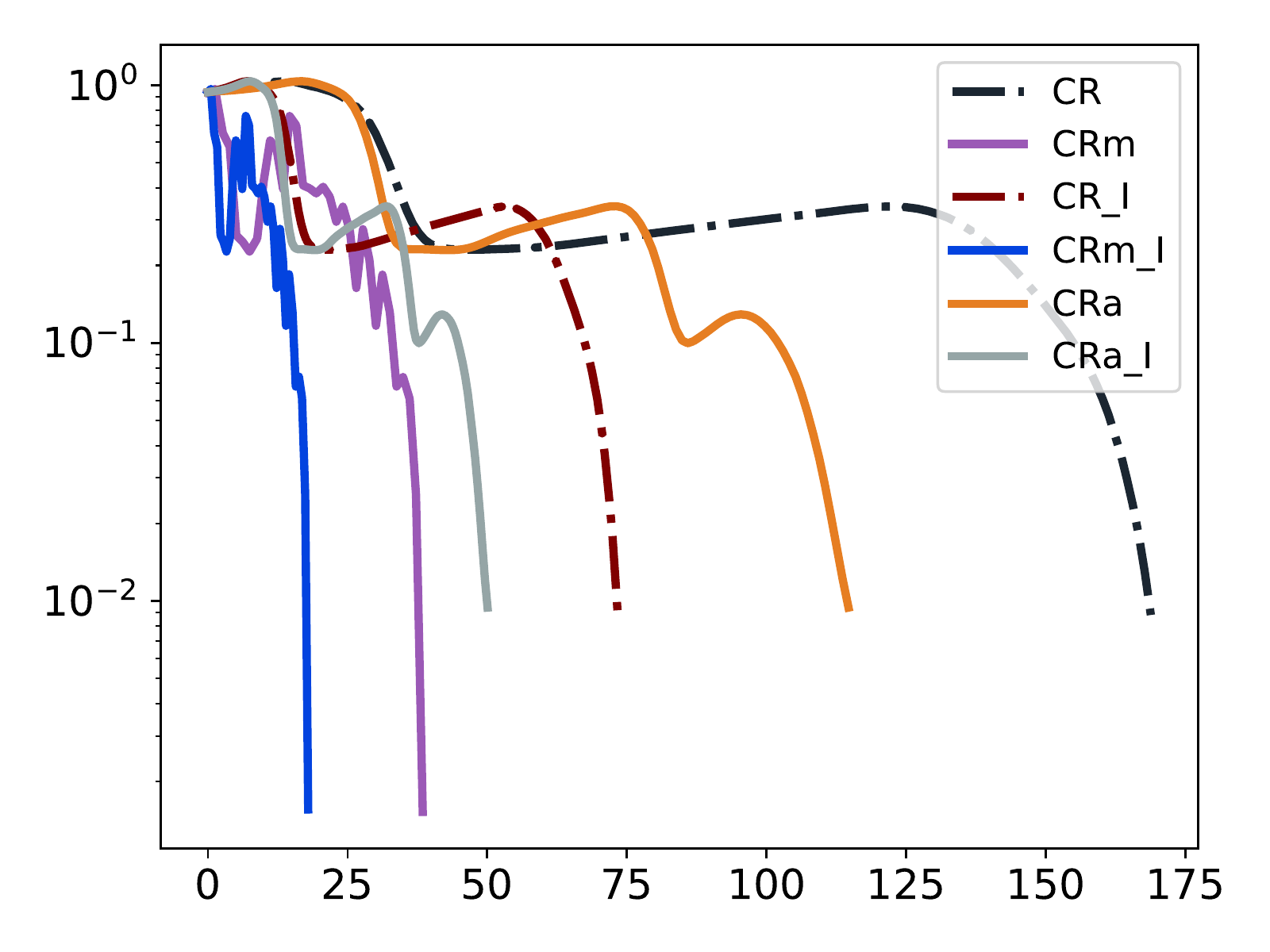}}
	\subfigure{\includegraphics[width=0.32\linewidth]{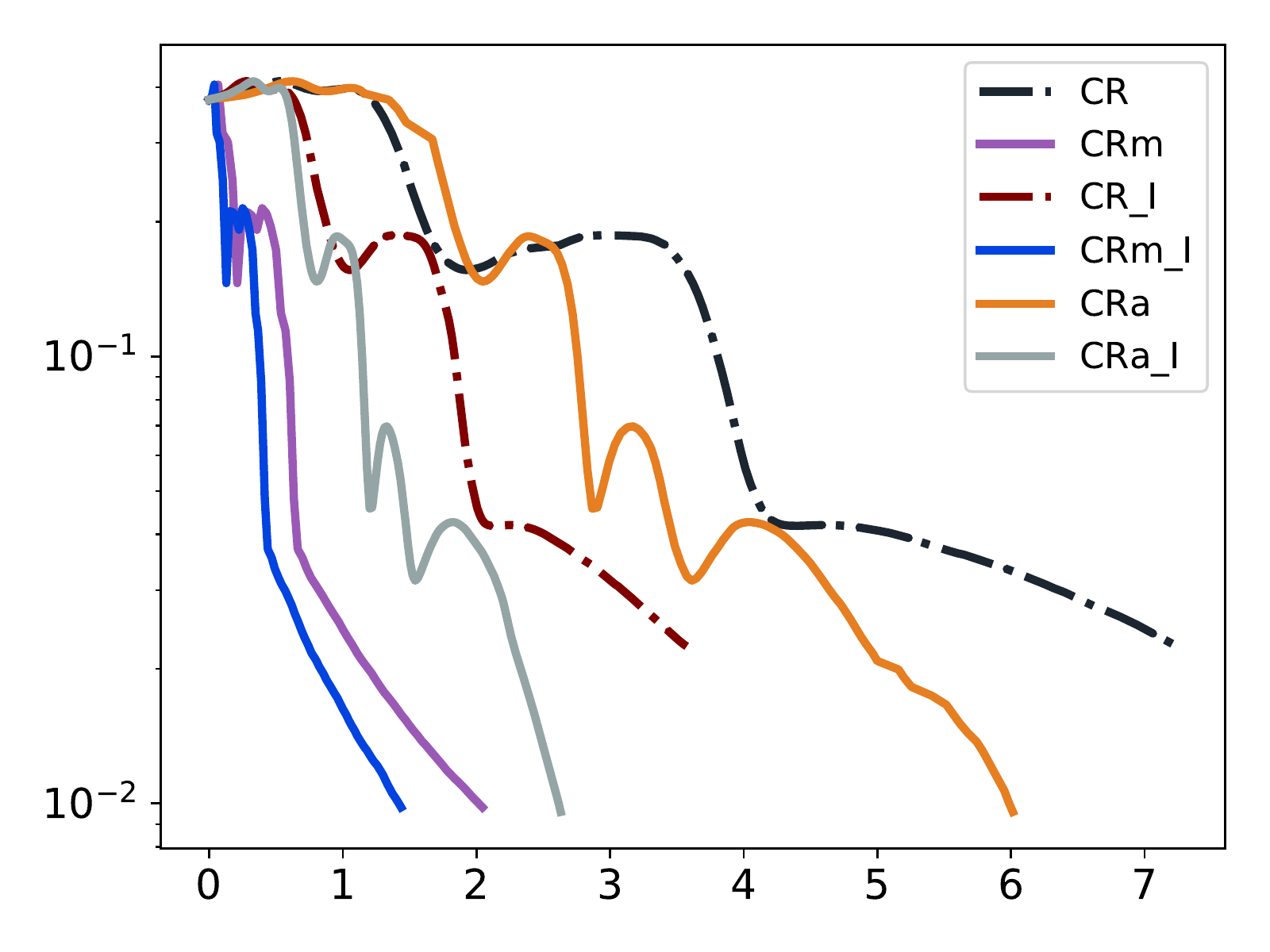}}\\
	\addtocounter{subfigure}{-3}
	\subfigure[a9a ($n = 32651, d =123$)]{\includegraphics[width=0.32\linewidth]{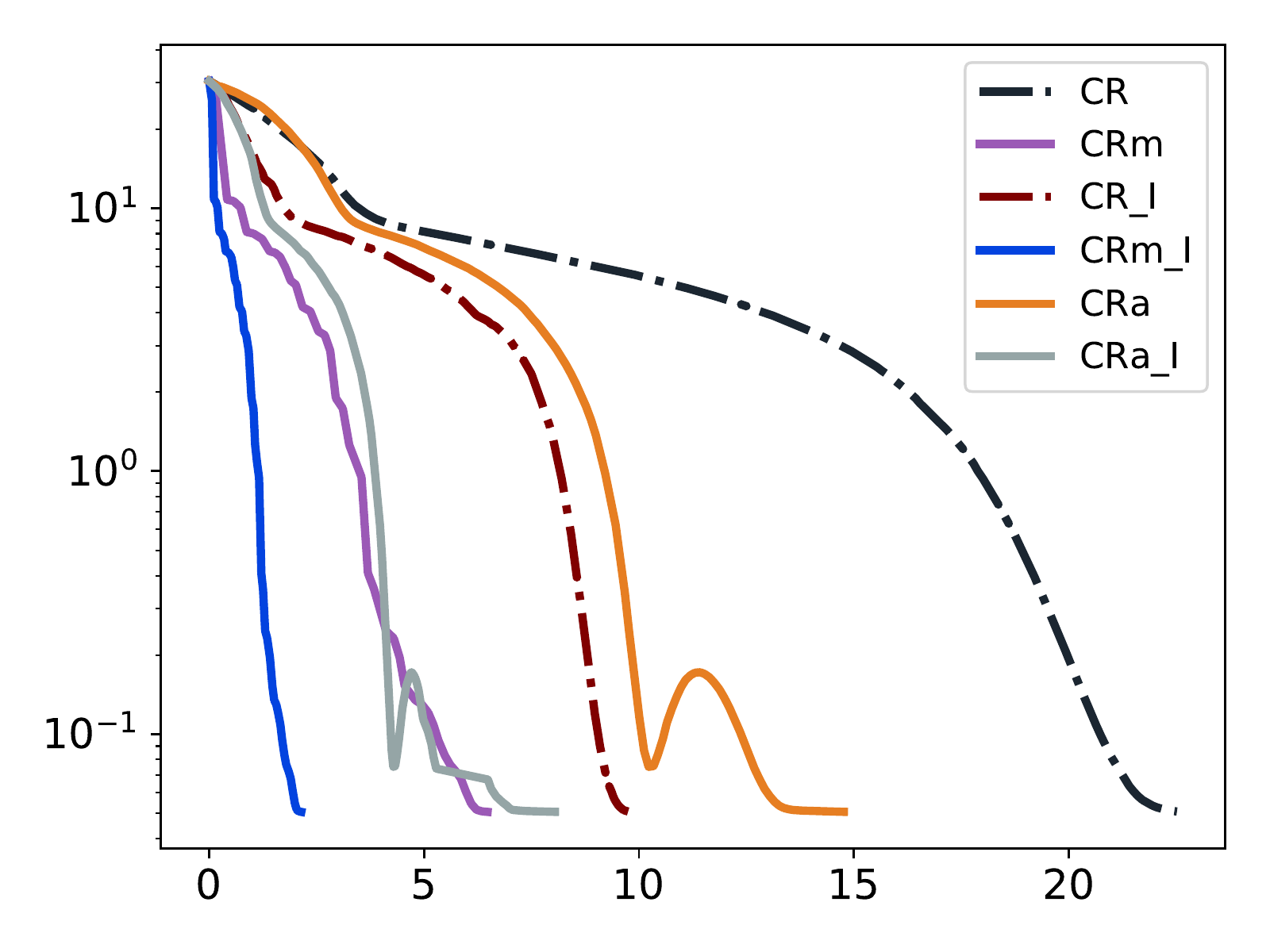}} 
	\subfigure[covtype ($n = 581012, d=54$)]{\includegraphics[width=0.32\linewidth]{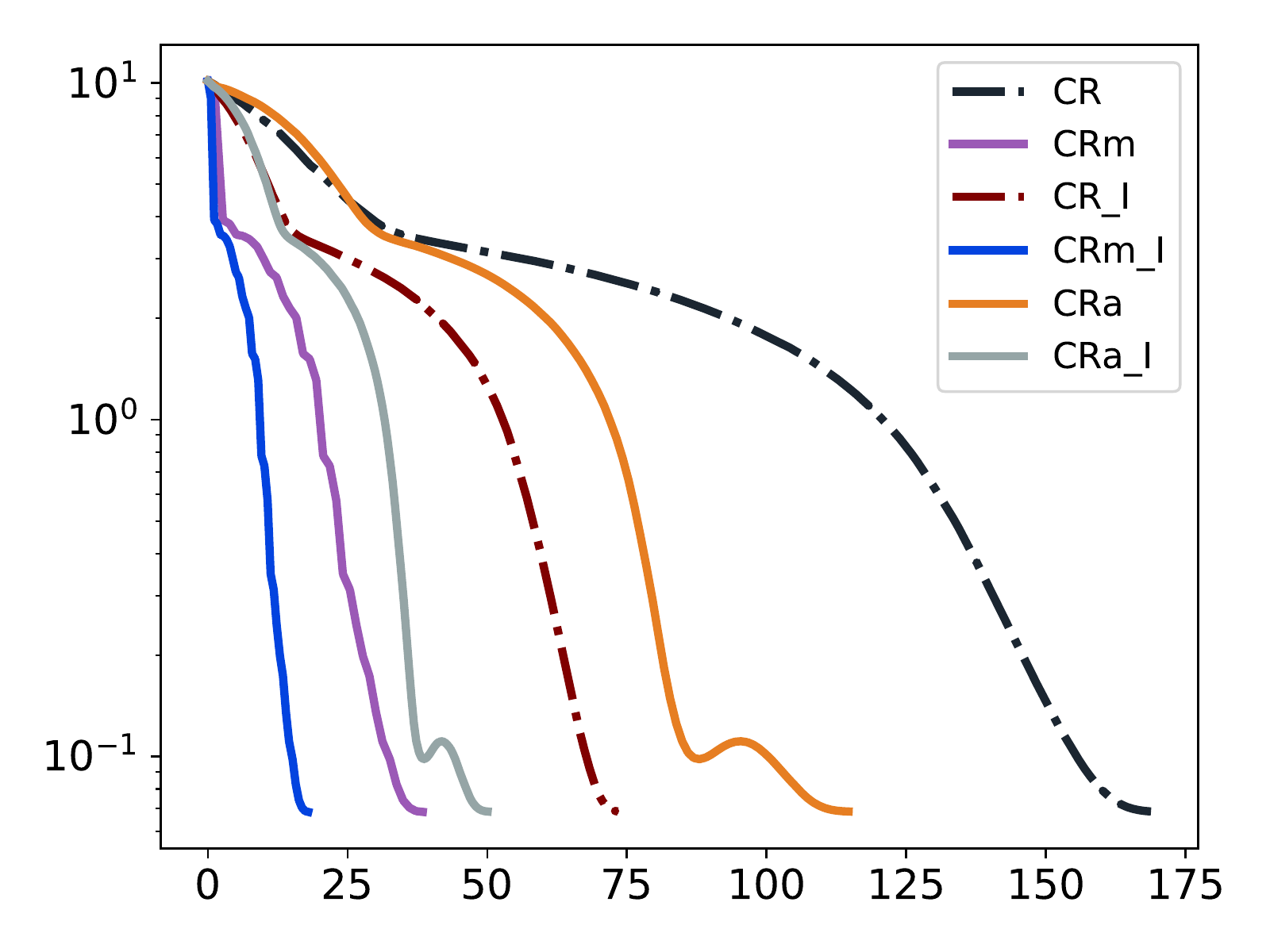}}  
	\subfigure[ijcnn1($n = 35000, d =22$)]{\includegraphics[width=0.32\linewidth]{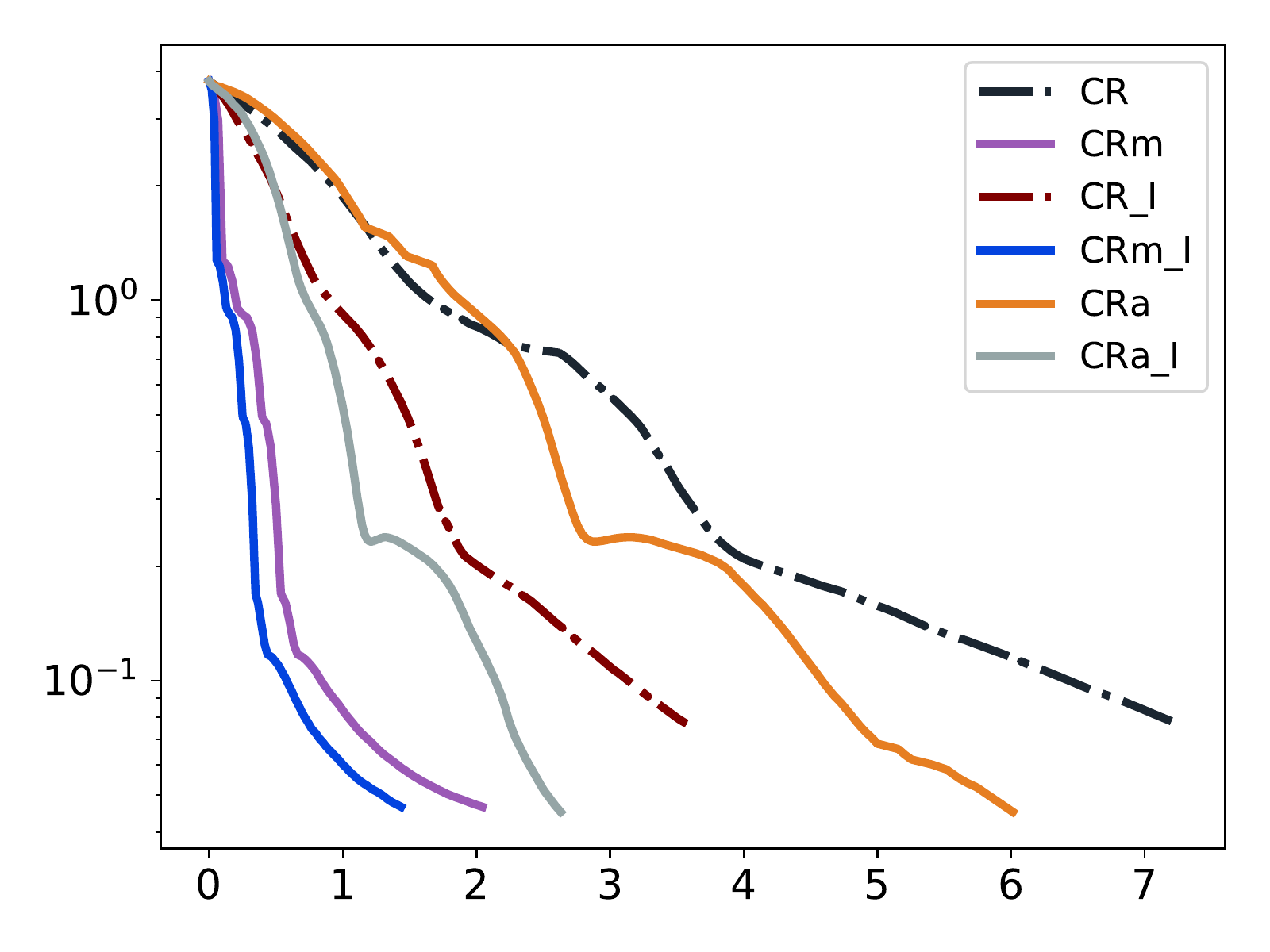}}  
	\caption{Nonconvex logistic regression. Top: gradient norm v.s. time. Bottom: function value gap v.s. time. }    \label{ie_1}
\end{figure}
\begin{figure} [h]  
	\vspace{-0.3cm}
	\centering 
	\subfigure{\includegraphics[width=0.32\linewidth]{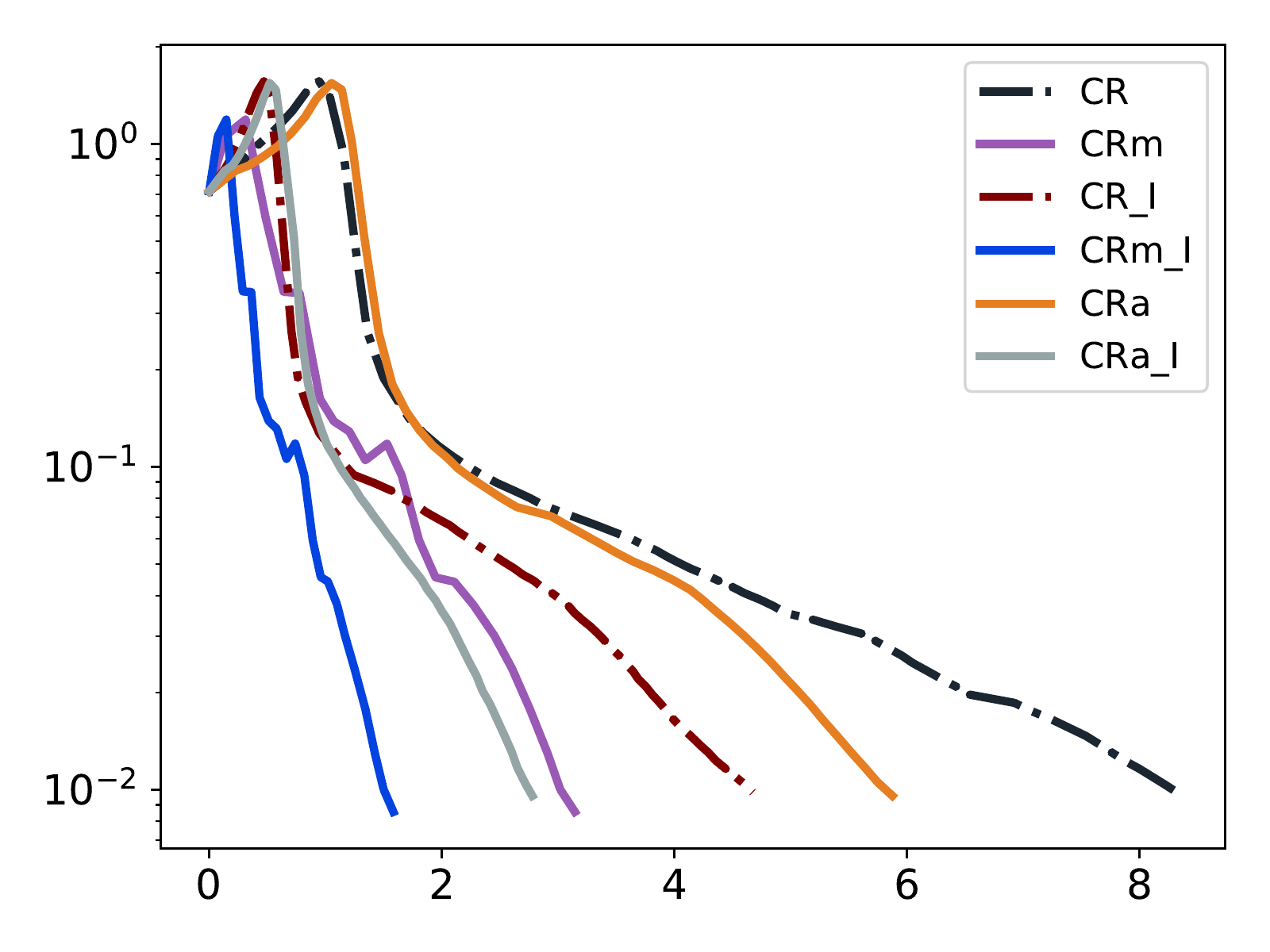}} 
	\subfigure{\includegraphics[width=0.32\linewidth]{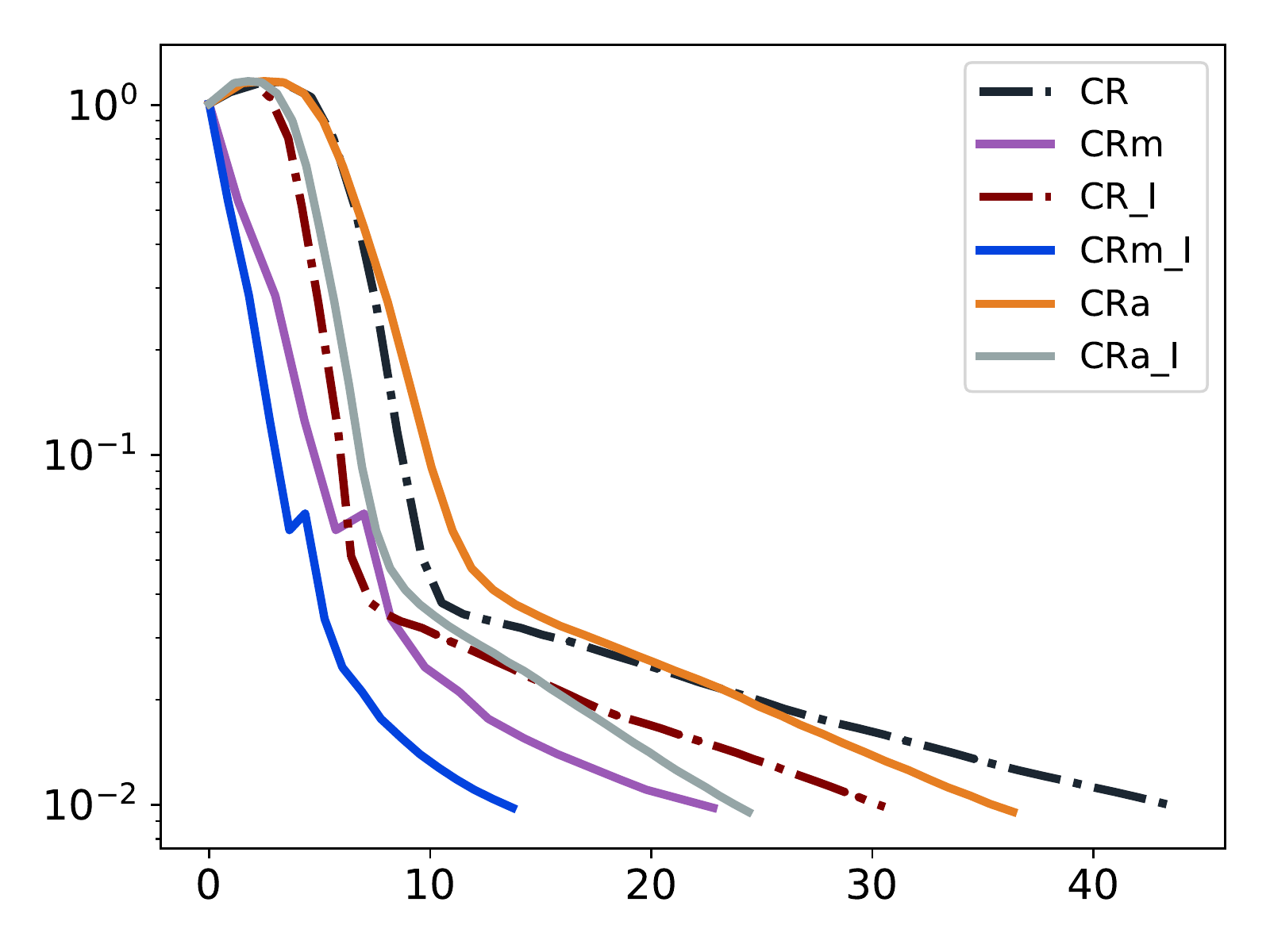}}
	\subfigure{\includegraphics[width=0.32\linewidth]{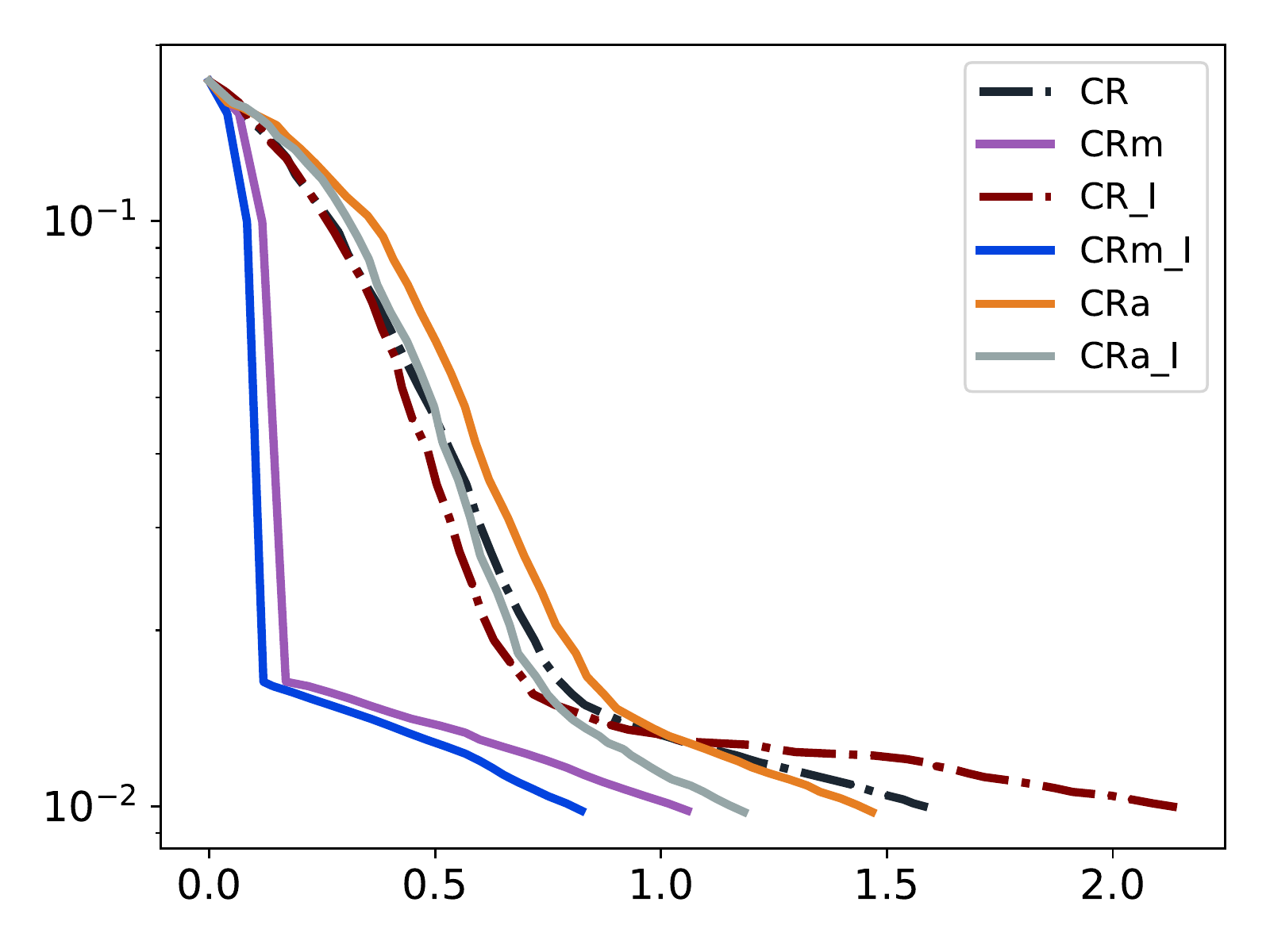}} 
	\addtocounter{subfigure}{-3}
	\subfigure[a9a($n = 32651, d =123$)]{\includegraphics[width=0.32\linewidth]{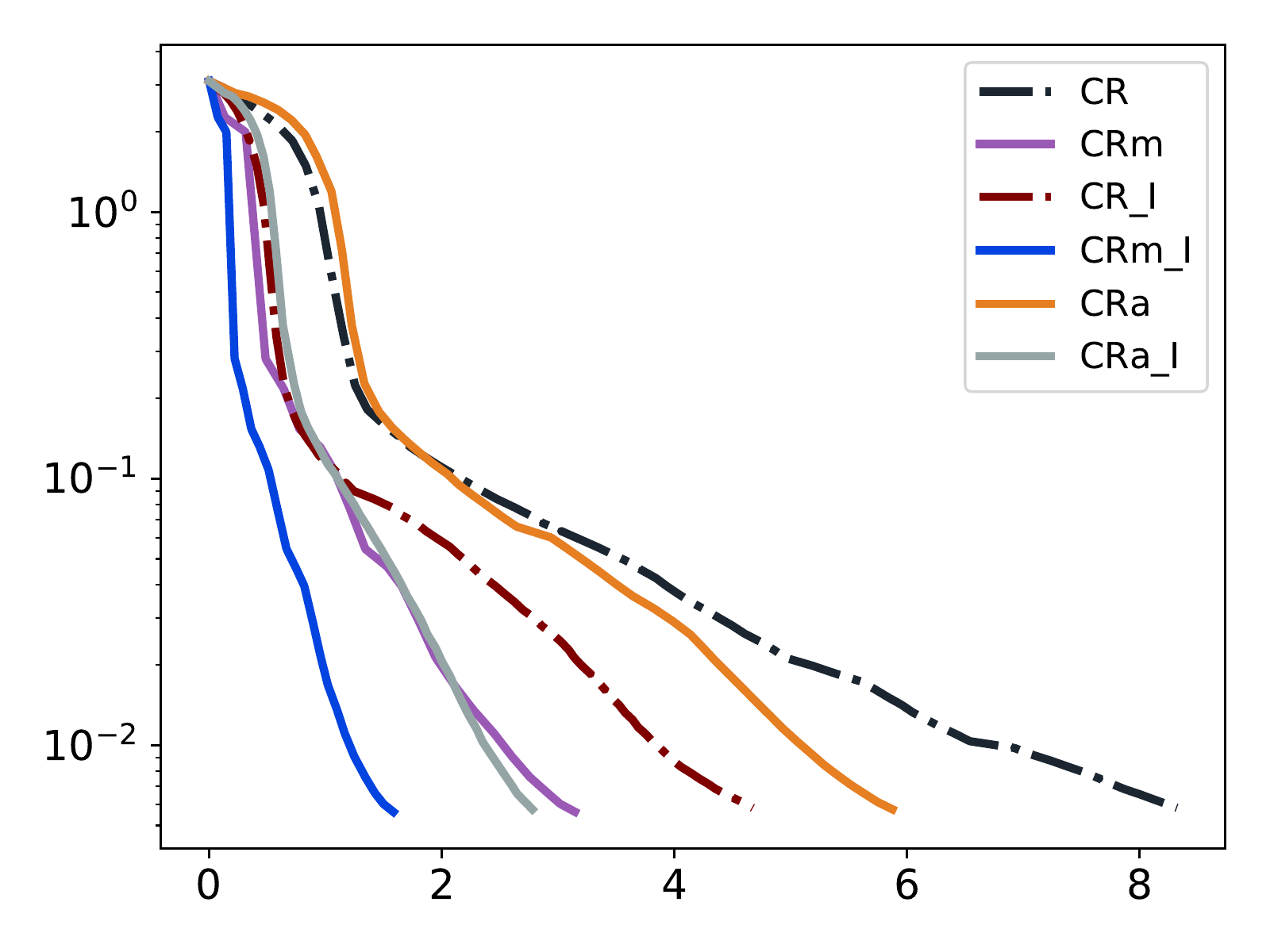}} 
	\subfigure[covtype($n = 581012, d =54$)]{\includegraphics[width=0.32\linewidth]{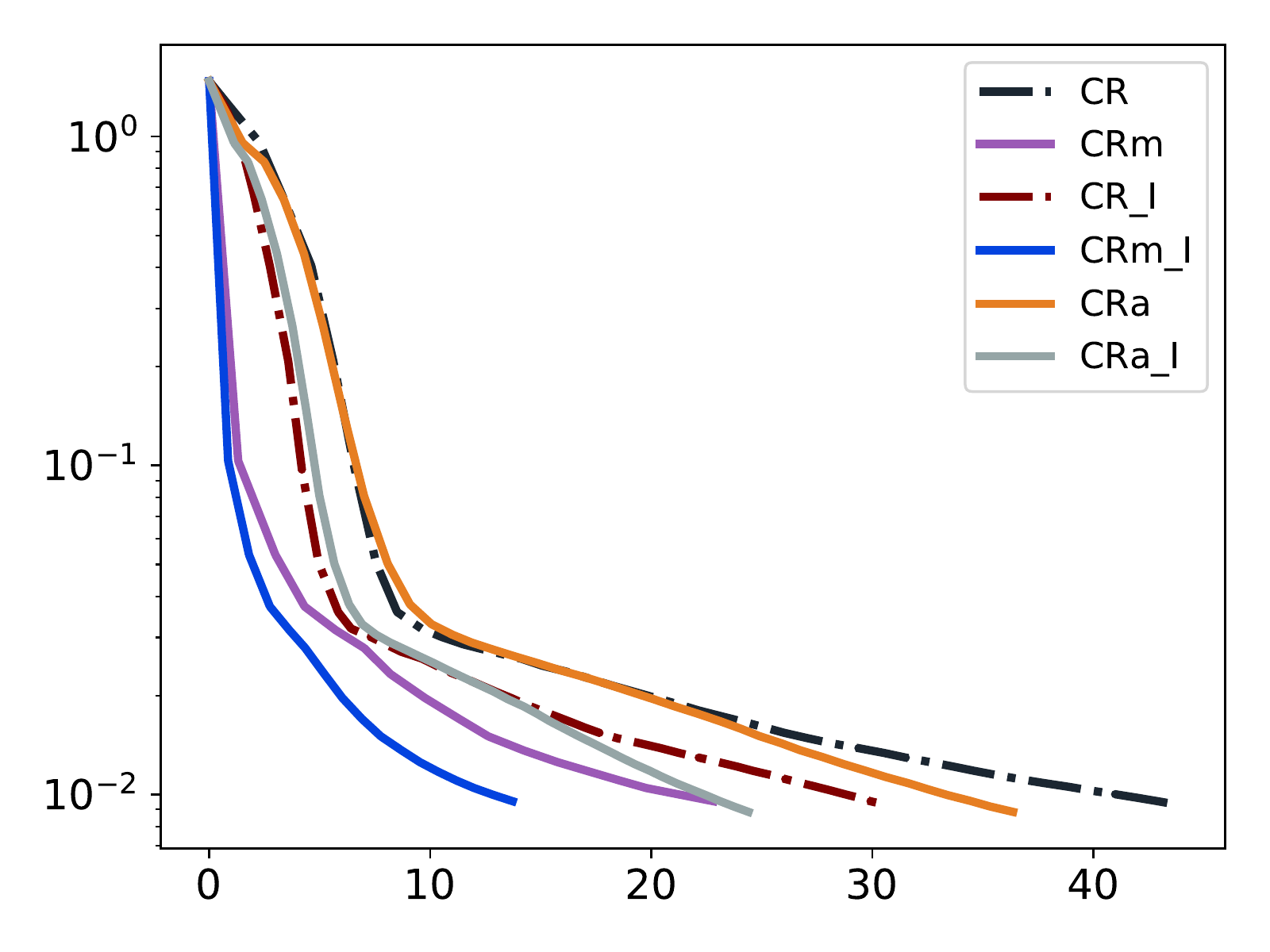}}  
	\subfigure[ijcnn1($n = 35000, d =22$)]{\includegraphics[width=0.32\linewidth]{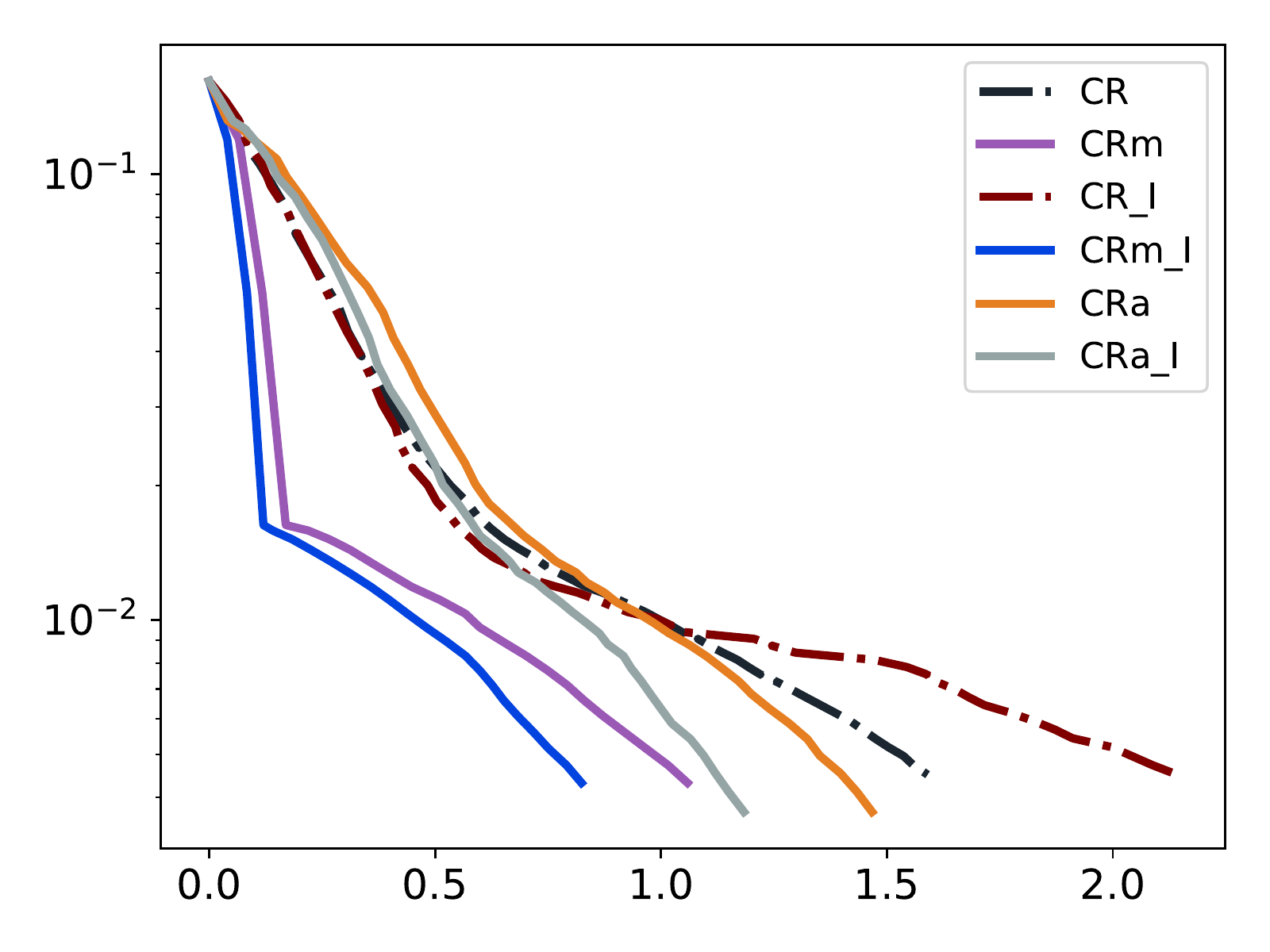}}  
	\caption{Robust linear regression loss: The top row presents the gradient norm versus runtime. The bottom row presents the function value gap versus runtime.}   
	\vspace{-0.3cm} \label{ie_2}
\end{figure}

\section{Technical Lemmas}
In this section, we introduce several technical lemmas that are useful for proving our main results.  


\begin{lemma}[\cite{Nesterov2006}, Lemma 1] \label{Hessian_square_bound}
	Let the Hessian $\nabla^2 f$ of the function  $f$ be $L_2$-Lipschitz continuous with $L_2 > 0$. Then, for any $\x, \mathbf{y} \in \mathbb{R}^d$, we have
	\begin{align*}
	\norml{\nabla f( \mathbf{y} ) - \nabla f(\x) - \nabla^2 f(\x )(\mathbf{y} - \x ) } &\leqslant \frac{L_2}{2} \norml{\mathbf{y}  - \x}^2. \\
	|f(\mathbf{y}) - f(\x) - \nabla f(\x)^T(\mathbf{y}  - \x) - \frac{1}{2} (\mathbf{y}  - \x)^T &\nabla^2 f(\x )(\mathbf{y} - \x )|  \leqslant \frac{L_2}{6}  \norml{\mathbf{y}  - \x}^3.
	\end{align*}
\end{lemma}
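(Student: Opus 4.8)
The plan is to prove both inequalities from a single device: the fundamental theorem of calculus applied along the line segment joining $\x$ and $\y$. Since \Cref{Assumption_1} guarantees that $f$ is twice continuously differentiable, every integral representation below is valid, and the $L_2$-Lipschitz hypothesis on $\nabla^2 f$ (in operator norm) supplies the only nontrivial estimate.

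First I would handle the gradient bound. Writing $\x_t = \x + t(\y - \x)$ for $t \in [0,1]$ and differentiating $\nabla f(\x_t)$ in $t$, the fundamental theorem gives $\nabla f(\y) - \nabla f(\x) = \int_0^1 \nabla^2 f(\x_t)(\y - \x)\, dt$. Subtracting the constant vector $\nabla^2 f(\x)(\y-\x) = \int_0^1 \nabla^2 f(\x)(\y-\x)\,dt$ and pulling the norm inside the integral yields
\[
\norml{\nabla f(\y) - \nabla f(\x) - \nabla^2 f(\x)(\y-\x)} \leqslant \int_0^1 \norml{\nabla^2 f(\x_t) - \nabla^2 f(\x)}\,\norml{\y - \x}\, dt.
\]
The Lipschitz continuity of the Hessian bounds the operator-norm factor by $L_2\,\norml{\x_t - \x} = L_2\, t\,\norml{\y - \x}$, so the right-hand side equals $L_2\,\norml{\y-\x}^2 \int_0^1 t\, dt = \tfrac{L_2}{2}\norml{\y-\x}^2$, which is the first claim.

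For the function-value bound, the cleanest route is to reuse the first inequality rather than redo the integration. Define the scalar function $h(t) = f(\x_t) - f(\x) - t\,\nabla f(\x)^\top(\y-\x) - \tfrac{t^2}{2}(\y-\x)^\top \nabla^2 f(\x)(\y-\x)$, which satisfies $h(0)=0$ and, upon differentiating and using the symmetry of the Hessian, $h'(t) = \big[\nabla f(\x_t) - \nabla f(\x) - t\,\nabla^2 f(\x)(\y-\x)\big]^\top (\y-\x)$. The bracketed vector is exactly the gradient remainder already controlled: applying the first inequality with $\x_t$ in place of $\y$ (so that $\x_t - \x = t(\y-\x)$) gives $\norml{\nabla f(\x_t) - \nabla f(\x) - \nabla^2 f(\x)\,t(\y-\x)} \leqslant \tfrac{L_2}{2}\norml{t(\y-\x)}^2 = \tfrac{L_2 t^2}{2}\norml{\y-\x}^2$. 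Then by Cauchy--Schwarz $|h'(t)| \leqslant \tfrac{L_2 t^2}{2}\norml{\y-\x}^3$, and integrating from $0$ to $1$ gives $|h(1)| = \big|\int_0^1 h'(t)\, dt\big| \leqslant \tfrac{L_2}{2}\norml{\y-\x}^3\int_0^1 t^2\, dt = \tfrac{L_2}{6}\norml{\y-\x}^3$. Since $h(1)$ is precisely the quantity inside the absolute value in the second claim, this finishes the argument.

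There is no genuinely hard step here; the proof is entirely bookkeeping, and I would flag only two points that require care. First, the Lipschitz hypothesis is stated in the operator norm, so the factor $\norml{\nabla^2 f(\x_t) - \nabla^2 f(\x)}$ must be read as an operator norm and paired with Cauchy--Schwarz to dominate the quadratic form $(\y-\x)^\top[\cdots](\y-\x)$ by $\norml{\cdots}\,\norml{\y-\x}^2$. Second, one must track the elementary constants $\int_0^1 t\,dt = \tfrac12$ and $\int_0^1 t^2\,dt = \tfrac13$ that produce the coefficients $\tfrac{L_2}{2}$ and $\tfrac{L_2}{6}$. An equally valid alternative for the second inequality is the Taylor integral remainder $f(\y) - f(\x) - \nabla f(\x)^\top(\y-\x) = \int_0^1 (1-t)(\y-\x)^\top \nabla^2 f(\x_t)(\y-\x)\, dt$; subtracting $\tfrac12(\y-\x)^\top\nabla^2 f(\x)(\y-\x) = \int_0^1(1-t)(\y-\x)^\top\nabla^2 f(\x)(\y-\x)\,dt$ and using $\int_0^1 t(1-t)\,dt = \tfrac16$ then yields the same bound directly.
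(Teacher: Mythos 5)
Your proof is correct: the paper states this lemma without proof, citing it directly as Lemma 1 of \cite{Nesterov2006}, and your integral-representation argument --- the fundamental theorem of calculus along the segment $\x_t = \x + t(\y-\x)$ for the gradient bound, then integrating the already-controlled gradient remainder (or equivalently the Taylor integral remainder) for the function-value bound, with the constants $\int_0^1 t\,dt = \tfrac12$ and $\int_0^1 t^2\,dt = \tfrac16$ correctly tracked --- is essentially the same standard argument given in that reference. Nothing further is needed.
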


The following lemma provides bounds on $\norml{\hat{\x}_{k+1} - \x_k}$ as shown in \cite[Lemma 1]{Yue2018}. 

\begin{lemma}[\cite{Yue2018}, Lemma 1] \label{ada_lemma_1}
  	Let \Cref{Assumption_1} hold. Then, the sequences $\{ \x_k\}_{k \geqslant 0}$ and $\{\hat{\x}_{k} \}_{k\geqslant 1}$ generated by Algorithm \ref{Adaptive_version} satisfies, for all $k \geqslant 0$,
  	\begin{align}
  	\norml{\hat{\x}_{k+1} - \x_k} \leqslant c_1 \cdot \textrm{dist}(\x_k, \mathcal{X}), 
  	\end{align}
  	where $c_1 \triangleq \left( 1 + \frac{L_2}{M} + \sqrt{\left(1+ \frac{L_2}{M}\right)^2 + \frac{L_2}{M}}\right)$.
\end{lemma}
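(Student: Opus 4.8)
The plan is to work directly with the cubic subproblem solved in the cubic step (\cref{I_1}), observing that $\hat{\x}_{k+1} - \x_k = \s_{k+1}$ is exactly its minimizer, and to compare this step against the displacement toward the nearest second-order stationary point. Write $r = \norml{\s_{k+1}}$, let $\x^\star \in \mathcal{X}$ achieve $\textrm{dist}(\x_k,\mathcal{X}) =: d$, and set $\bar{\s} = \x^\star - \x_k$, so $\norml{\bar{\s}} = d$. The two structural facts I would invoke are the standard first- and second-order optimality conditions of the cubic minimizer: the stationarity identity $\nabla f(\x_k) + \nabla^2 f(\x_k)\s_{k+1} + \tfrac{M}{2} r\, \s_{k+1} = 0$, and the curvature condition $\nabla^2 f(\x_k) + \tfrac{M}{2} r\, I \succeq 0$.

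Next I would bring in the stationary point $\x^\star$ through two consequences of $\nabla f(\x^\star)=0$, $\nabla^2 f(\x^\star)\succeq 0$, and the $L_2$-Lipschitz Hessian (\Cref{Assumption_1} together with \Cref{Hessian_square_bound}). First, the gradient approximation $\norml{\nabla f(\x_k) + \nabla^2 f(\x_k)\bar{\s}} \leqslant \tfrac{L_2}{2} d^2$, obtained from the first inequality of \Cref{Hessian_square_bound} applied between $\x_k$ and $\x^\star$. Second, the pointwise curvature lower bound $\nabla^2 f(\x_k) \succeq -L_2 d\, I$, obtained by transferring positive semidefiniteness from $\x^\star$ to $\x_k$ at a cost of $L_2 d$ in operator norm. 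Subtracting the gradient approximation from the stationarity identity and collecting terms, I would arrive at the identity $A(\s_{k+1} - \bar{\s}) = -\mathbf{e} - \tfrac{M}{2} r\, \bar{\s}$, where $A = \nabla^2 f(\x_k) + \tfrac{M}{2} r\, I \succeq 0$ by the second-order optimality condition and $\norml{\mathbf{e}} \leqslant \tfrac{L_2}{2} d^2$.

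From here I would split into two cases according to the sign of $\tfrac{M}{2} r - L_2 d$. If $\tfrac{M}{2} r \leqslant L_2 d$, then $r \leqslant \tfrac{2L_2}{M} d$, which is already dominated by $c_1 d$ since $c_1 > 2(1 + L_2/M) > \tfrac{2L_2}{M}$. Otherwise $A$ is positive definite with $\lambda_{\min}(A) \geqslant \tfrac{M}{2} r - L_2 d > 0$, so inverting the identity gives $\norml{\s_{k+1} - \bar{\s}} \leqslant (\tfrac{L_2}{2} d^2 + \tfrac{M}{2} r d)/(\tfrac{M}{2} r - L_2 d)$. Combining this with the triangle inequality $r \leqslant d + \norml{\s_{k+1} - \bar{\s}}$ and clearing denominators produces a quadratic inequality in the ratio $r/d$; its positive root is $c_1$, which closes the argument.

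The main obstacle is that $\nabla^2 f(\x_k)$ is indefinite, so naive operator-norm bounds on $\nabla^2 f(\x_k)(\s_{k+1}-\bar{\s})$ would introduce the gradient-Lipschitz constant $L_1$ and spoil the clean constant. The key that avoids this is the localization near $\mathcal{X}$: the curvature bound $\nabla^2 f(\x_k) \succeq -L_2 d\, I$ together with the second-order optimality condition makes the shifted matrix $A$ positive (semi)definite with a controlled smallest eigenvalue, which is precisely what lets the step be compared against $\bar{\s}$ using only $L_2$, $M$, and $d$. Some care must also be taken at the boundary where $A$ is only semidefinite, which is exactly why the case split above is needed.
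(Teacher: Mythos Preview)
The paper does not supply its own proof of this lemma; it simply cites Lemma~1 of Yue et al.\ (2018). Your argument is the standard one and is structurally correct: the first- and second-order optimality conditions of the cubic subproblem, combined with the two consequences of $\x^\star\in\mathcal{X}$ that you list, do yield the identity $A(\s_{k+1}-\bar\s)=-\mathbf{e}-\tfrac{M}{2}r\,\bar\s$, and the case split on the sign of $\tfrac{M}{2}r-L_2d$ is the right way to handle the possible degeneracy of $A$.

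One small inaccuracy worth fixing: in Case~2, after clearing denominators you obtain
\[
\tfrac{M}{2}\,r^2-(L_2+M)\,d\,r+\tfrac{L_2}{2}\,d^2\leqslant 0,
\]
whose larger root in $t=r/d$ is
\[
\Big(1+\tfrac{L_2}{M}\Big)+\sqrt{\Big(1+\tfrac{L_2}{M}\Big)^2-\tfrac{L_2}{M}},
\]
with a \emph{minus} sign under the radical, not the plus sign appearing in $c_1$. This root is strictly smaller than $c_1$, so the claimed bound $r\leqslant c_1\,d$ still follows; only your closing remark that ``its positive root is $c_1$'' needs to be softened to ``its positive root is at most $c_1$.'' The edge case $d=0$ is also fine, since then $\nabla f(\x_k)=0$ and $\nabla^2 f(\x_k)\succeq 0$ force $\s_{k+1}=0$ as the unique cubic minimizer.
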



Next, we develop a number of useful bounds regarding the sequences $\{ \x_k\}_{k \geqslant 0}$ and $\{\hat{\x}_{k} \}_{k\geqslant 1}$ that are generated by  \Cref{Adaptive_version}. We refer to \Cref{proof_of_lemmas} for the details of the proofs.
 \begin{lemma} \label{after_cubic}
 	Let \Cref{Assumption_1} hold. Then, the sequences $\{\x_k\}_{k\geqslant0}$ and  $\{\hat{\x}_k\}_{k\geqslant0}$ generated by Algorithms \ref{Adaptive_version}  with $M > \frac{2L_2}{3}$ satisfy, for all $k \geqslant 0$,
 	\begin{align}
 	f(\hat{\x}_{k+1}) - f(\x_k) &\leqslant -\gamma \norml{\hat{\x}_{k+1}- \x_k}^3, \label{momentum_12}\\
 	\sum_{i=0}^{k}    \norml{\hat{\x}_{k+1}- \x_k}^3  &\leqslant\frac{ f(\x_0) - f^\star }{\gamma}, \label{momentum_4}\\
 	\norml{\nabla f(\hat{\x}_{k+1})} &\leqslant  \frac{L_2 + M}{2} \norml{\hat{\x}_{k+1} - \x_k}^2, \label{momentum_7}\\
 	\lambda_{\min} ( \nabla^2 f(\hat{\x}_{k+1}) ) &\geqslant - \frac{M+2L_2}{2}  \norml{\hat{\x}_{k+1} - \x_k}, \label{momentum_14}
 	\end{align}
 	where $\gamma = \frac{3M-2L_2 }{12}$. Furthermore, there exists a $k_0 \in \{0, \cdots k\}$ such that
 	\begin{align}
 	\norml{\hat{\x}_{k_0+1}- \x_{k_0}}  \leqslant  \frac{1}{k^{1/3}} \left(  \frac{ f(\x_0) - f^\star }{  \gamma}\right)^{1/3}. \label{momentum_8}
 	\end{align}
 \end{lemma}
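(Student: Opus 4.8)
The plan is to reduce all five bounds to the two optimality conditions satisfied by the cubic subproblem minimizer together with the Hessian-Lipschitz inequalities of \Cref{Hessian_square_bound}. Writing the subproblem objective as $m_k(\s) = \nabla f(\x_k)^\top\s + \frac12\s^\top\nabla^2 f(\x_k)\s + \frac{M}{6}\norml{\s}^3$, the global minimizer $\s_{k+1}$ (so that $\hat{\x}_{k+1} = \x_k + \s_{k+1}$) satisfies the first-order stationarity condition $\nabla f(\x_k) + \nabla^2 f(\x_k)\s_{k+1} + \frac{M}{2}\norml{\s_{k+1}}\s_{k+1} = 0$ and the second-order condition $\nabla^2 f(\x_k) + \frac{M}{2}\norml{\s_{k+1}} I \succeq 0$ (the standard Nesterov--Polyak characterization of a cubic-subproblem global minimizer). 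I would record these first. The second-order condition is the only genuinely non-routine input, and I expect it to be the crux of the argument, since it characterizes a global minimizer of a \emph{nonconvex} subproblem; everything else is algebraic bookkeeping.

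For \cref{momentum_12} I would start from the cubic upper bound in \Cref{Hessian_square_bound}, namely $f(\hat{\x}_{k+1}) - f(\x_k) \leqslant \nabla f(\x_k)^\top\s_{k+1} + \frac12\s_{k+1}^\top\nabla^2 f(\x_k)\s_{k+1} + \frac{L_2}{6}\norml{\s_{k+1}}^3$, then substitute the first-order condition to eliminate the linear term via $\nabla f(\x_k)^\top\s_{k+1} = -\s_{k+1}^\top\nabla^2 f(\x_k)\s_{k+1} - \frac{M}{2}\norml{\s_{k+1}}^3$, and finally bound the surviving quadratic form from above using the second-order condition in the form $\s_{k+1}^\top\nabla^2 f(\x_k)\s_{k+1} \geqslant -\frac{M}{2}\norml{\s_{k+1}}^3$. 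Collecting the coefficients of $\norml{\s_{k+1}}^3$ gives exactly $-\tfrac{M}{4}+\tfrac{L_2}{6} = -\frac{3M-2L_2}{12} = -\gamma$, with $\gamma>0$ forced precisely by the standing hypothesis $M > 2L_2/3$.

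Bound \cref{momentum_4} then follows by telescoping, and this is the only place the momentum/monotone machinery enters: the monotone step guarantees $f(\x_{k+1}) \leqslant f(\hat{\x}_{k+1})$, so \cref{momentum_12} yields $\gamma\norml{\hat{\x}_{i+1}-\x_i}^3 \leqslant f(\x_i) - f(\x_{i+1})$, and summing over $i = 0,\dots,k$ the right-hand side collapses to $f(\x_0) - f(\x_{k+1}) \leqslant f(\x_0) - f^\star$ (I note the summand in the statement should read $\norml{\hat{\x}_{i+1}-\x_i}^3$). For \cref{momentum_7} I would apply the gradient form of \Cref{Hessian_square_bound} to the pair $\hat{\x}_{k+1},\x_k$, then use the first-order condition to replace $\nabla f(\x_k) + \nabla^2 f(\x_k)\s_{k+1}$ by $-\frac{M}{2}\norml{\s_{k+1}}\s_{k+1}$ and apply the triangle inequality, giving the factor $\frac{L_2+M}{2}$. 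For \cref{momentum_14} I would use $L_2$-Lipschitz continuity of the Hessian to get $\lambda_{\min}(\nabla^2 f(\hat{\x}_{k+1})) \geqslant \lambda_{\min}(\nabla^2 f(\x_k)) - L_2\norml{\s_{k+1}}$, and then invoke the second-order condition $\lambda_{\min}(\nabla^2 f(\x_k)) \geqslant -\frac{M}{2}\norml{\s_{k+1}}$ to obtain the factor $\frac{M+2L_2}{2}$. Finally, \cref{momentum_8} is a pigeonhole/averaging consequence of \cref{momentum_4}: among the $k+1$ nonnegative terms whose sum is at most $(f(\x_0)-f^\star)/\gamma$, the smallest is at most the average $\frac{1}{k+1}\cdot\frac{f(\x_0)-f^\star}{\gamma} \leqslant \frac{1}{k}\cdot\frac{f(\x_0)-f^\star}{\gamma}$; taking $k_0$ to be its index and extracting cube roots gives the stated bound.
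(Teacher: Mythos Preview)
Your proposal is correct and follows essentially the same route as the paper: the paper also starts from \Cref{Hessian_square_bound}, invokes the first- and second-order optimality conditions of the cubic subproblem (packaged there as citations to Lemma~4, Lemma~5, and Proposition~1 of \cite{Nesterov2006}) to obtain \cref{momentum_12}, \cref{momentum_7}, \cref{momentum_14}, then telescopes using the monotone step $f(\x_{k+1})\leqslant f(\hat{\x}_{k+1})$ for \cref{momentum_4} and averages for \cref{momentum_8}. The only cosmetic difference is that you unpack the Nesterov--Polyak lemmas explicitly rather than citing them; your observation about the index typo in the summand of \cref{momentum_4} is also correct.
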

 The following lemma establishes the corresponding bounds regarding $\{ \x_k\}_{k \geqslant 0}$ and $\{\hat{\x}_{k} \}_{k\geqslant 1}$ that are generated by the {\em inexact} variants of Algorithms \ref{Adaptive_version}. We refer to \Cref{proof_of_lemmas} for the details of the proof.
\begin{lemma} \label{im_important_cubic_lemma}
   	Let \Cref{Assumption_1} and \Cref{assump_2} hold. Then, the sequences $\{\x_k\}_{k\geqslant0}$ and  $\{\hat{\x}_k\}_{k\geqslant0}$ generated by the inexact variants of Algorithms \ref{Adaptive_version} with $M > \frac{2L_2}{3} + 2$ satisfy, for all $k \geqslant 0$ and any $\epsilon_1 >0$,
   	\begin{align} 
   	f(\hat{\x}_{k+1}) - f(\x_k) &\leqslant - \frac{3M-2L_2}{12} \norml{\hat{ \x}_{k+1} - \x_k}^3 + \frac{1}{2} \norml{\hat{ \x}_{k+1} - \x_k}^2 \epsilon_1, \label{im0}\\
   	\norml{\nabla f(\hat{\x}_{k+1})} &\leqslant \frac{L_2 + M}{2}\norml{\hat{ \x}_{k+1} - \x_k}^2 + \epsilon_1 \norml{\hat{ \x}_{k+1} - \x_k}, \label{i_momentum_7}\\
   	\lambda_{\min} ( \nabla^2 f(\hat{\x}_{k+1}) ) &\geqslant  -\frac{M + 2L_2}{2} \norml{\hat{ \x}_{k+1} - \x_k} - \epsilon_1. \label{i_momentum_14}
   	\end{align}
   	Furthermore, if the total number of iterations $k >  \left( \frac{12}{3M-2L_2 -6} \right) \frac{  f(\x_0) - f^{\star}} { \epsilon_1^3 }$ for any $\epsilon_1 >0$, then there exists a $k_0 \in \{0, \cdots k\}$ such that
   	\begin{align}
   	\norml{\hat{\x}_{k_0+1}- \x_{k_0}}  \leqslant   \epsilon_1 . \label{i_momentum_8}
	\end{align} 
\end{lemma}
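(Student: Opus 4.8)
The plan is to adapt the proof of the exact \Cref{after_cubic} by carefully tracking the perturbation $\mathbf{E}_k \triangleq \mathbf{H}_k - \nabla^2 f(\x_k)$, which by \Cref{assump_2} satisfies $\norml{\mathbf{E}_k} \leqslant \epsilon_1$. Throughout, I write $\s_{k+1} \triangleq \hat{\x}_{k+1} - \x_k$ for the inexact cubic step. The starting point is the pair of optimality conditions for the subproblem \cref{eq: H}: the first-order condition $\nabla f(\x_k) + \mathbf{H}_k \s_{k+1} + \frac{M}{2}\norml{\s_{k+1}}\s_{k+1} = 0$, and the second-order condition $\mathbf{H}_k + \frac{M}{2}\norml{\s_{k+1}}\mathbf{I} \succeq 0$. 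These are the only structural facts about $\s_{k+1}$ I will use, and the key point is that the subproblem controls $\mathbf{H}_k$ rather than the true Hessian $\nabla^2 f(\x_k)$.

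To establish \cref{im0}, I would start from the cubic Taylor bound in \Cref{Hessian_square_bound}, namely $f(\hat{\x}_{k+1}) - f(\x_k) \leqslant \nabla f(\x_k)^\top \s_{k+1} + \frac{1}{2}\s_{k+1}^\top \nabla^2 f(\x_k)\s_{k+1} + \frac{L_2}{6}\norml{\s_{k+1}}^3$. I would then eliminate the linear term using the first-order condition (dotting it with $\s_{k+1}$ yields $\nabla f(\x_k)^\top\s_{k+1} = -\s_{k+1}^\top \mathbf{H}_k \s_{k+1} - \frac{M}{2}\norml{\s_{k+1}}^3$), and substitute $\nabla^2 f(\x_k) = \mathbf{H}_k - \mathbf{E}_k$. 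After cancellation this leaves $-\frac{1}{2}\s_{k+1}^\top \mathbf{H}_k \s_{k+1} - \frac{1}{2}\s_{k+1}^\top \mathbf{E}_k \s_{k+1} - \frac{M}{2}\norml{\s_{k+1}}^3 + \frac{L_2}{6}\norml{\s_{k+1}}^3$. The second-order condition bounds $-\frac{1}{2}\s_{k+1}^\top \mathbf{H}_k \s_{k+1} \leqslant \frac{M}{4}\norml{\s_{k+1}}^3$, while $-\frac{1}{2}\s_{k+1}^\top \mathbf{E}_k \s_{k+1} \leqslant \frac{1}{2}\epsilon_1\norml{\s_{k+1}}^2$; combining these gives exactly \cref{im0}. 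The bounds \cref{i_momentum_7,i_momentum_14} follow analogously. For the gradient, I would add and subtract $\nabla f(\x_k) + \nabla^2 f(\x_k)\s_{k+1}$, bound the remainder $\norml{\nabla f(\hat{\x}_{k+1}) - \nabla f(\x_k) - \nabla^2 f(\x_k)\s_{k+1}}$ by $\frac{L_2}{2}\norml{\s_{k+1}}^2$ via \Cref{Hessian_square_bound}, and use the first-order condition together with $\norml{\mathbf{E}_k\s_{k+1}} \leqslant \epsilon_1\norml{\s_{k+1}}$. For the Hessian, I would combine the $L_2$-Lipschitz estimate $\nabla^2 f(\hat{\x}_{k+1}) \succeq \nabla^2 f(\x_k) - L_2\norml{\s_{k+1}}\mathbf{I}$ with the chain $\nabla^2 f(\x_k) = \mathbf{H}_k - \mathbf{E}_k \succeq -\frac{M}{2}\norml{\s_{k+1}}\mathbf{I} - \epsilon_1\mathbf{I}$.

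Finally, for the pigeonhole claim \cref{i_momentum_8}, I would argue by contradiction: suppose $\norml{\hat{\x}_{i+1}-\x_i} > \epsilon_1$ for every $i \in \{0,\dots,k\}$. The monotone step \cref{li_0} gives $f(\x_{i+1}) \leqslant f(\hat{\x}_{i+1})$, so \cref{im0} yields $f(\x_{i+1}) - f(\x_i) \leqslant -\frac{3M-2L_2}{12}\norml{\s_{i+1}}^3 + \frac{1}{2}\epsilon_1\norml{\s_{i+1}}^2$. Since $\epsilon_1 < \norml{\s_{i+1}}$ under the contradiction hypothesis, the error term is absorbed, $\frac{1}{2}\epsilon_1\norml{\s_{i+1}}^2 < \frac{1}{2}\norml{\s_{i+1}}^3$, leaving a per-step decrease of at least $\frac{3M-2L_2-6}{12}\norml{\s_{i+1}}^3 > \frac{3M-2L_2-6}{12}\epsilon_1^3$. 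This is precisely where the hypothesis $M > \frac{2L_2}{3}+2$ enters: it makes the coefficient $\frac{3M-2L_2-6}{12}$ strictly positive. Telescoping over $i=0,\dots,k$ and using $f \geqslant f^\star$ gives $(k+1)\frac{3M-2L_2-6}{12}\epsilon_1^3 < f(\x_0) - f^\star$, which contradicts the assumed lower bound on $k$.

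I expect the main obstacle to be the bookkeeping in \cref{im0}: one must resist bounding $\s_{k+1}^\top \nabla^2 f(\x_k)\s_{k+1}$ directly, and instead route everything through $\mathbf{H}_k$ via the substitution $\nabla^2 f(\x_k) = \mathbf{H}_k - \mathbf{E}_k$, since only $\mathbf{H}_k$ is controlled by the subproblem's curvature condition. Doing so is what produces the clean $\frac{1}{2}\epsilon_1\norml{\s_{k+1}}^2$ error term rather than a larger constant, and in turn makes the threshold $M > \frac{2L_2}{3} + 2$ exactly the condition under which the per-step decrease in the pigeonhole step stays positive.
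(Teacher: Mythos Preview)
Your proposal is correct and follows essentially the same route as the paper's proof. The only cosmetic difference is that for \cref{im0} the paper invokes Lemma~4 of \cite{Nesterov2006} to bound the cubic model value by $-\frac{M}{12}\norml{\s_k}^3$ in one stroke, whereas you re-derive that bound inline from the first- and second-order optimality conditions; the remaining arguments for \cref{i_momentum_7}, \cref{i_momentum_14}, and the contradiction/telescoping for \cref{i_momentum_8} are identical in substance.
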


Note that in  CRm, $\hat{\x}_{k+1}$ is generated by $\x_{k}$ through a cubic regularization step, and  the output sequence $\{ f(\x_k)\}_{k \geqslant 0}$ is monotone through a monotone step. Therefore, \Cref{ada_lemma_1,after_cubic,im_important_cubic_lemma} hold for CRm.

\section{Global Convergence: Proof of  \Cref{Adaptive_algorithm_convergence}} \label{proof_s_1}  
 We first prove a useful inequality. Note that  
 \begin{align*}
 \norml{\x_{k+1} - \hat{\x}_{k+1}} & \numleqslant{i}   \max \left(\norml{\hat{\x}_{k+1} - \hat{\x}_{k+1}} ,\norml{\tilde{\x}_{k+1} - \hat{\x}_{k+1}} \right) \\
 &=  \norml{\tilde{\x}_{k+1} - \hat{\x}_{k+1}}  \numleqslant{ii}   \beta_{k+1}  \norml{ \hat{\x}_{k+1} -  \hat{\x}_{k}},   \numberthis  \label{li_transfer}  
 \end{align*}
 where (i) follows from the definition of $\x_{k+1}$ (see \cref{li_0}) and (ii) follows from \cref{li_1}.
  
 We then present the following lemma that bounds $\norml{ \hat{\x}_{k+1} - \hat{\x}_k}$, where $\{\hat{\x_k}\}_{k \geqslant 0}$ is generated by CRm. 
 \begin{lemma} \label{momentum_6}
 	Let \Cref{Assumption_1} hold. Then, the sequence $\{ \hat{\x}_k \}$ generated by CRm  satisfies
 	\begin{align}
 	\norml{ \hat{\x}_{k+1} - \hat{\x}_k}   \leqslant  c_5,
 	\end{align} 
 	where $c_5 \triangleq  \frac{1}{1 - \rho}\left( \frac{ f(\x_0) - f^\star }{\gamma}\right)^{1/3}$.
 \end{lemma}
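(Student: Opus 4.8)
The plan is to express the quantity $\norml{\hat{\x}_{k+1} - \hat{\x}_k}$ recursively in terms of its predecessor and then solve the resulting linear recursion. First I would apply the triangle inequality by inserting the iterate $\x_k$:
\begin{align*}
\norml{\hat{\x}_{k+1} - \hat{\x}_k} \leqslant \norml{\hat{\x}_{k+1} - \x_k} + \norml{\x_k - \hat{\x}_k}.
\end{align*}
The first term is the progress of the cubic step and is directly controllable; the second term measures how far the monotone-step output $\x_k$ deviates from the cubic point $\hat{\x}_k$, which is exactly what the momentum step governs.

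The two terms are bounded by separate earlier results. For the cubic-step term, I would invoke the summability bound \cref{momentum_4}, namely $\sum_{i=0}^{k} \norml{\hat{\x}_{i+1} - \x_i}^3 \leqslant (f(\x_0) - f^\star)/\gamma$; since every summand is nonnegative, each individual term is dominated by the whole sum, giving the uniform bound $\norml{\hat{\x}_{k+1} - \x_k} \leqslant b$ with $b \triangleq \big((f(\x_0) - f^\star)/\gamma\big)^{1/3}$. For the momentum term, I would use the already-established inequality \cref{li_transfer} with the index shifted down by one, $\norml{\x_k - \hat{\x}_k} \leqslant \beta_k \norml{\hat{\x}_k - \hat{\x}_{k-1}}$, together with the fact that the adaptive parameter satisfies $\beta_k \leqslant \rho < 1$ by its definition in \cref{li_2}. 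Writing $a_k \triangleq \norml{\hat{\x}_{k+1} - \hat{\x}_k}$, these combine into the scalar recursion
\begin{align*}
a_k \leqslant b + \rho\, a_{k-1}, \qquad k \geqslant 1.
\end{align*}

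Finally I would solve this recursion by induction to obtain the claimed constant $c_5 = b/(1-\rho)$. For the base case $k=0$, recall $\hat{\x}_0 = \y_0 = \x_0$, so $a_0 = \norml{\hat{\x}_1 - \x_0} \leqslant b \leqslant b/(1-\rho)$. For the inductive step, assuming $a_{k-1} \leqslant b/(1-\rho)$, the recursion yields $a_k \leqslant b + \rho\cdot b/(1-\rho) = b/(1-\rho)$, closing the induction and establishing $\norml{\hat{\x}_{k+1} - \hat{\x}_k} \leqslant c_5$ for all $k$.

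I do not expect any serious obstacle here; the only point requiring care is correctly reading off the per-term bound $\norml{\hat{\x}_{k+1} - \x_k} \leqslant b$ from the cumulative bound \cref{momentum_4} and verifying that the index shift in \cref{li_transfer} is legitimate (in particular that $\x_k \in \{\hat{\x}_k, \tilde{\x}_k\}$ so that the momentum bound genuinely applies, with the trivial case $\x_k = \hat{\x}_k$ making the left-hand side vanish). The contraction factor $\rho < 1$ is precisely what guarantees the geometric series closes to the finite constant $c_5$, so the hypothesis $\rho < 1$ in the algorithm input is exactly what makes the argument go through.
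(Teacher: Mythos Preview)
Your proposal is correct and follows essentially the same approach as the paper: both apply the triangle inequality with the split point $\x_k$, invoke \cref{li_transfer} together with $\beta_k \leqslant \rho$ to obtain the linear recursion $a_k \leqslant \norml{\hat{\x}_{k+1} - \x_k} + \rho\, a_{k-1}$, and then use the summability bound \cref{momentum_4} to control the cubic-step increments uniformly by $b = \big((f(\x_0)-f^\star)/\gamma\big)^{1/3}$. The only cosmetic difference is that the paper unrolls the recursion into the explicit geometric sum $\sum_{i=0}^k \rho^{k-i}\norml{\hat{\x}_{i+1}-\x_i}$ and then factors out $\max_i \norml{\hat{\x}_{i+1}-\x_i}$, whereas you close the same recursion directly by induction; both routes yield $c_5 = b/(1-\rho)$.
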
 
 \begin{proof}
 	See \Cref{pf_lemmas_4}.
 \end{proof}
 Next, we prove the main theorem for CRm. Note that
 	\begin{align*}
 	\norml{\nabla f(\x_{k+1})} &\leqslant \norml{\nabla f(\hat{\x}_{k+1})} +  \norml{\nabla f(\hat{\x}_{k+1}) - \nabla f(\x_{k+1})} \\
 	&\numleqslant{i}  \norml{\nabla f(\hat{\x}_{k+1})} + L_1 \norml{\x_{k+1} - \hat{\x}_{k+1}} \\
 	&\numleqslant{ii}  \norml{\nabla f(\hat{\x}_{k+1})} + L_1 \beta_{k+1} \norml{ \hat{\x}_{k+1} - \hat{\x}_{k}} \\
 	&\numleqslant{iii} \norml{\nabla f(\hat{\x}_{k+1})}\left(1  + L_1    \norml{ \hat{\x}_{k+1} - \hat{\x}_{k}} \right) \numberthis \label{li_8_0} \\
 	&\numleqslant{iv}  \frac{L_2 + M}{2} \norml{\hat{\x}_{k+1} - \x_k}^2 \left(1 + \frac{ L_1 }{1 - \rho}\left( \frac{ f(\x_0) - f^\star }{\gamma}\right)^{1/3}  \right) , \numberthis \label{li_8}
 	\end{align*}
 	where (i) follows from  the Lipschitz gradient assumption, (ii) follows from \cref{li_transfer}, (iii) follows from \cref{li_2}, which implies that $\beta_{k+1} \leqslant  \norml{\nabla f(\hat{\x}_{k+1})}$ and (iv) follows from \Cref{momentum_6} and \cref{momentum_7}.
 	
 	Next, we bound the minimum eigenvalue of the Hessian. Observe that
 	\begin{align*}
 	\lambda_{\min}\left({\nabla^2 f(\x_{k+1}) }\right) &\numgeq{i}  \lambda_{\min}\left({\nabla^2 f(\hat{\x}_{k+1}) }\right) - \norml{\nabla^2 f(\x_{k+1}) - \nabla^2 f(\hat{\x}_{k+1})} \\
 	&\numgeq{ii}	\lambda_{\min}\left({\nabla^2 f(\hat{\x}_{k+1}) }\right) - L_2 \norml{\x_{k+1} - \hat{\x}_{k+1}} \\
 	&\numgeq{iii} \lambda_{\min}\left({\nabla^2 f(\hat{\x}_{k+1}) }\right) - L_2 \beta_{k+1} \norml{\hat{\x}_{k+1} - \hat{\x}_{k}}  \\
 	&\numgeq{iv} \lambda_{\min}\left({\nabla^2 f(\hat{\x}_{k+1}) }\right) - L_2 \norml{\hat{\x}_{k+1} - \x_k} \norml{\hat{\x}_{k+1} - \hat{\x}_{k}} \numberthis \label{li_90} \\
 	&\numgeq{v}   - \frac{M+2L_2}{2}  \norml{\hat{\x}_{k+1} - \x_k} - \norml{\hat{\x}_{k+1} - \x_k}  \frac{ L_2 }{1 - \rho}\left( \frac{ f(\x_0) - f^\star }{\gamma}\right)^{1/3}  \\
 	&= -\norml{\hat{\x}_{k+1} - \x_k} \left( \frac{M+2L_2}{2} +   \frac{ L_2 }{1 - \rho}\left( \frac{ f(\x_0) - f^\star }{\gamma}\right)^{1/3} \right) , \numberthis \label{li_9}
 	\end{align*}  
 	where (i) follows form Weyl's inequality, (ii) follows from the fact that $\nabla^2 f(\cdot)$ is $L_2$ Lipschitz, (iii) follows from  \cref{li_transfer}, (iv)  follows from \cref{li_2}, which implies that $\beta_{k+1} \leqslant \norml{\hat{\x}_{k+1} - \x_k}$  and (v) follows from \Cref{momentum_6} and \cref{momentum_14}.
 	
 	Then, by \cref{momentum_8} , there exists a point $k_0 \in \{0, \cdots, k-1 \}$ such that
 	\begin{align}
 	\norml{\hat{\x}_{k_0+1}- \x_{k_0}}  \leqslant  \frac{1}{k^{1/3}} \left(  \frac{ f(\x_0) - f^\star }{  \gamma}\right)^{1/3} \label{momentum_15}.
 	\end{align}
 	Plugging \cref{momentum_15} into \cref{li_8,li_9} with $k = k_0$, we further obtain that
 	\begin{align*}
 	\norml{\nabla f(\x_{k_0+1})}  &\leqslant   \frac{1}{k^{2/3}} \frac{L_2 + M}{2} \left(  \frac{ f(\x_0) - f^\star }{  \gamma}\right)^{2/3} \left(1    + \frac{ L_1 }{1 - \rho}\left( \frac{ f(\x_0) - f^\star }{\gamma}\right)^{1/3}   \right)\numberthis \label{li_10} \\
 	\lambda_{\min}\left({\nabla^2 f(\x_{k_0+1}) }\right)  
 	&\geqslant -  \frac{1}{k^{1/3}}  \left(  \frac{ f(\x_0) - f^\star }{  \gamma}\right)^{1/3}\left( \frac{M+2L_2}{2} +   \frac{ L_2 }{1 - \rho}\left( \frac{ f(\x_0) - f^\star }{\gamma}\right)^{1/3} \right). \numberthis \label{li_11}
 	\end{align*}
 	
 	Thus, in order to  guarantee $\norml{\nabla f(\x_{k_0+1})} \leqslant \epsilon$ in \cref{li_10} and $\lambda_{\min}\left({\nabla^2 f(\x_{k_0+1}) }\right) \geqslant - \sqrt{\epsilon}$ in \cref{li_11}, we require 
 	\begin{align} 
 	k \geqslant     \frac{1}{\epsilon^{3/2}} \bigg(\frac{L_2 + M}{2}\bigg)^{3/2}\left(\frac{ f(\x_0) - f^\star }{\gamma} \right)\left(1    + \frac{ L_1 }{1 - \rho}\left( \frac{ f(\x_0) - f^\star }{\gamma}\right)^{1/3} \right)^{3/2} . \label{li_20}
 	\end{align} 
 	\begin{align}
 	k \geqslant \frac{1}{\epsilon^{3/2}}\left(  \frac{ f(\x_0) - f^\star }{  \gamma}\right)\left( \frac{M+2L_2}{2} +   \frac{ L_2 }{1 - \rho}\left( \frac{ f(\x_0) - f^\star }{\gamma}\right)^{1/3} \right)^3. \label{li_21}
 	\end{align}
 	
 	Combining \cref{li_20,li_21}, we obtain that CRm  must pass an $\epsilon$-approximate second-order stationary point if 
 	\begin{align*} 
 	k &\geqslant \frac{1}{\epsilon^{3/2}} \max  \{c_3,  c_4  \},
 	\end{align*}  
 	where \begin{align*}
 	c_3 &\triangleq \bigg(\frac{L_2 + M}{2}\bigg)^{3/2}\left(\frac{ f(\x_0) - f^\star }{\gamma} \right)\left(1    + \frac{ L_1 }{1 - \rho}\left( \frac{ f(\x_0) - f^\star }{\gamma}\right)^{1/3} \right)^{3/2}   \\
 	c_4 &\triangleq \left(  \frac{ f(\x_0) - f^\star }{  \gamma}\right)\left( \frac{M+2L_2}{2} +   \frac{ L_2 }{1 - \rho}\left( \frac{ f(\x_0) - f^\star }{\gamma}\right)^{1/3} \right)^3 .
 	\end{align*} 

\section{Local Quadratic Convergence: Proof of \Cref{Local_adaptive_momentum_Thm}} \label{proof_s_3}
We first present the following lemma that characterizes the properties of the sequence $\{ \x_{k}\}_{k \geqslant 0}$ generated by  CRm.
\begin{lemma} \label{li_lemma_local}
	Let \Cref{Assumption_1} hold and assume that $\mathcal{L}(f(\x_k))$ is bounded for some $k \geqslant 0$. Then, the sequence $\{ \x_{k}\}_{k \geqslant 0}$ generated by CRm  and its set of accumulation points $\mathbf{\bar{\mathcal{X}}}$ satisfy
	\begin{enumerate}
		\item[(i)] $v \triangleq \lim\limits_{k \rightarrow \infty} f(\x_k)$ exists.
		\item[(ii)]  $\lim\limits_{k \rightarrow \infty} \norml{\x_{k+1} - \x_{k}} = 0$.
		\item[(iii)] The sequence $\{ \x_{k}\}_{k \geqslant 0}$ is bounded.
		\item[(iv)] The set $\mathbf{\bar{\mathcal{X}}}$ is bounded and non-empty. Moreover,  every$\bar{\x} \in \mathbf{\bar{\mathcal{X}}}$, satisfies 
		$$f(\bar{\x})=v,\quad\nabla f(\bar{\x})=0,\quad\nabla^2 f(\bar{\x})\succcurlyeq 0.
		$$
	\end{enumerate}
\end{lemma}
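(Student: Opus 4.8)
The plan is to combine the per-step descent and summability estimates already established in \Cref{after_cubic} with the perturbation bounds derived in the proof of \Cref{Adaptive_algorithm_convergence}, and then pass to the limit along convergent subsequences. First I would record the basic monotonicity: the monotone step \cref{li_0} gives $f(\x_{k+1}) \leqslant f(\hat{\x}_{k+1})$, which together with the cubic descent \cref{momentum_12} yields $f(\x_{k+1}) - f(\x_k) \leqslant -\gamma \norml{\hat{\x}_{k+1} - \x_k}^3 \leqslant 0$. Hence $\{f(\x_k)\}$ is nonincreasing and bounded below by $f^\star$ (\Cref{Assumption_1}), so part (i) follows at once with $v = \lim_k f(\x_k) \geqslant f^\star$. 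Moreover every iterate lies in the sublevel set $\{\x : f(\x) \leqslant f(\x_0)\}$, which is bounded by \Cref{Assumption_1}, giving part (iii).

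For part (ii), I would telescope the descent inequality to obtain the summability bound $\sum_{k=0}^{\infty} \norml{\hat{\x}_{k+1} - \x_k}^3 \leqslant (f(\x_0) - f^\star)/\gamma < \infty$ (this is \cref{momentum_4}), whence $\norml{\hat{\x}_{k+1} - \x_k} \to 0$. To turn this into control of the true step I split $\norml{\x_{k+1} - \x_k} \leqslant \norml{\x_{k+1} - \hat{\x}_{k+1}} + \norml{\hat{\x}_{k+1} - \x_k}$ and bound the first term via \cref{li_transfer}, using $\beta_{k+1} \leqslant \norml{\hat{\x}_{k+1} - \x_k}$ from \cref{li_2} and $\norml{\hat{\x}_{k+1} - \hat{\x}_k} \leqslant c_5$ from \Cref{momentum_6}; this produces $\norml{\x_{k+1} - \x_k} \leqslant (1 + c_5)\norml{\hat{\x}_{k+1} - \x_k} \to 0$, establishing (ii).

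For part (iv), boundedness of $\{\x_k\}$ together with Bolzano--Weierstrass guarantees at least one accumulation point, so $\mathbf{\bar{\mathcal{X}}}$ is nonempty, and it is bounded as a subset of the compact sublevel set. Fix any $\bar{\x} \in \mathbf{\bar{\mathcal{X}}}$ and a subsequence $\x_{k_j} \to \bar{\x}$; continuity of $f$ gives $f(\bar{\x}) = \lim_j f(\x_{k_j}) = v$. The remaining stationarity conditions are the crux, and here the point is that the clean estimates of \Cref{after_cubic} hold at the \emph{cubic} iterate $\hat{\x}_{k+1}$ rather than at the \emph{monotone} iterate $\x_{k+1}$. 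I would transport them using the bounds from the Theorem~1 proof: by \cref{li_8_0} and \cref{momentum_7}, $\norml{\nabla f(\x_{k+1})} \leqslant (1 + L_1 c_5)\tfrac{L_2 + M}{2}\norml{\hat{\x}_{k+1} - \x_k}^2 \to 0$; and by \cref{li_90} and \cref{momentum_14}, $\lambda_{\min}(\nabla^2 f(\x_{k+1})) \geqslant -\big(\tfrac{M + 2L_2}{2} + L_2 c_5\big)\norml{\hat{\x}_{k+1} - \x_k} \to 0$. Passing to the limit along $\x_{k_j} \to \bar{\x}$ and invoking continuity of $\nabla f$ and of the map $A \mapsto \lambda_{\min}(A)$ composed with the continuous $\nabla^2 f$ then yields $\nabla f(\bar{\x}) = 0$ and $\lambda_{\min}(\nabla^2 f(\bar{\x})) \geqslant 0$, i.e. $\nabla^2 f(\bar{\x}) \succcurlyeq 0$.

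The main obstacle is precisely this transfer in part (iv): unlike vanilla CR, the monotone step may output the momentum point $\tilde{\x}_{k+1}$ rather than the cubic point $\hat{\x}_{k+1}$ at which the stationarity estimates are clean, so the whole argument hinges on the perturbation $\norml{\x_{k+1} - \hat{\x}_{k+1}}$ being of at least the same order as $\norml{\hat{\x}_{k+1} - \x_k}$. This is exactly what the adaptive momentum parameter buys us: the choice $\beta_{k+1} \leqslant \norml{\hat{\x}_{k+1} - \x_k}$ in \cref{li_2}, combined with the uniform bound $c_5$ of \Cref{momentum_6}, keeps the extrapolation small enough that it does not degrade the order of the gradient and Hessian estimates in the limit. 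Everything else is a routine application of monotonicity, summability, and compactness.
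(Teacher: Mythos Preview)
Your proposal is correct and follows essentially the same approach as the paper's own proof: monotonicity plus the cubic descent \cref{momentum_12} for (i) and (iii), the summability \cref{momentum_4} together with the split via \cref{li_transfer}, $\beta_{k+1}\leqslant\norml{\hat{\x}_{k+1}-\x_k}$, and \Cref{momentum_6} for (ii), and Bolzano--Weierstrass combined with the transported gradient/Hessian bounds \cref{li_8}, \cref{li_9} for (iv). Your emphasis on why the adaptive $\beta_{k+1}$ is what makes the transfer in (iv) go through is exactly the point.
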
  
\begin{proof}
	See \Cref{pf_lemmas_6}.
\end{proof}	

We next prove the main theorem for CRm.
	Denote $\mathbf{\bar{\x}}_k \in \argmin_{\mathbf{z \in \mathcal{X}}} \norml{\x_k - \mathbf{z} }^2$ as the projection of $\x_k$ onto $\mathcal{X}$. Since $\{ \x_{k}\}_{k \geqslant 0}$ is bounded (\Cref{li_lemma_local}, (iii)) and $\mathbf{\bar{\mathcal{X}}}$ is non-empty and bounded (\Cref{li_lemma_local}, (iv)),  we conclude that $\lim_{k \rightarrow \infty} \text{dist}(\x_k, \mathbf{\bar{\mathcal{X}}}) = 0$. By (iv) of  \Cref{li_lemma_local} and the definition of $\mathcal{X}$, we have $\mathbf{\bar{\mathcal{X}}} \subseteq \mathcal{X}$. Thus, we obtain that
	\begin{align} 
	\lim_{k \rightarrow \infty} \norml{\x_k - \mathbf{\bar{\x}}_k} =\lim_{k \rightarrow \infty} \text{dist}(\x_k, \mathcal{X}) \leqslant \lim_{k \rightarrow \infty} \text{dist}(\x_k, \mathbf{\bar{\mathcal{X}}})  =  0,
	\end{align}
	which implies that
	\begin{align}
	\lim_{k \rightarrow \infty} \text{dist}(\x_k, \mathcal{X}) = 0.  \label{li_local_4}
	\end{align}
	Therefore, for any $r>0$, there exists a $k_1 \geqslant 0$ such that  $\text{dist}(\x_k, \mathcal{X}) \leqslant r$ for all $ k \geqslant k_1$. Combining this with \Cref{ass_eb}, we obtain that
	\begin{align}
	\text{dist}(\x_k, \mathcal{X}) \leqslant \kappa\|\nabla f(\x_k)\|, \quad\forall k \geqslant k_1.
	\end{align} 
	Hence, for all $k \geqslant k_1$, we obtain that
	\begin{align*}
	\text{dist}(\x_{k+1}, \mathcal{X}) &\leqslant \kappa\|\nabla f(\x_{k+1})\| \\
	&\leqslant \kappa\|\nabla f(\x_{k+1}) -   \nabla f(\hat{\x}_{k+1}) \| + \kappa\| \nabla f(\hat{\x}_{k+1}) \| \\
	&\numleqslant{i} \kappa L_1 \norml{\x_{k+1} - \hat{\x}_{k+1}} + \kappa\| \nabla f(\hat{\x}_{k+1}) \| \\
	&\numleqslant{ii} \kappa L_1   \beta_{k+1} \norml{\hat{\x}_{k+1} - \hat{\x}_k} + \kappa\| \nabla f(\hat{\x}_{k+1}) \| \\
	&\numleqslant{iii}\kappa\| \nabla f(\hat{\x}_{k+1}) \|\left(L_1   \norml{\hat{\x}_{k+1} - \hat{\x}_k} + 1\right) \\
	&\numleqslant{iv} \kappa\| \nabla f(\hat{\x}_{k+1}) \| \left(    L_1 c_5+ 1\right) \\
	&\numleqslant{v}   \kappa \left(    \frac{L_2 + M}{2} \right)  \left( L_1 c_5+ 1\right)\norml{ \hat{\x}_{k+1} - \x_k}^2  \numberthis \label{li_local_1}
	\end{align*}
	where (i) follows from the Lipschitz gradient property, (ii) follows from  \cref{li_transfer}, (iii) follows from \cref{li_1}, which implies that $\beta_{k+1} \leqslant \| \nabla f(\hat{\x}_{k+1}) \|$,  (iv) follows from \Cref{momentum_6} and (v) follows from \cref{momentum_7}.	Combining \cref{li_local_1} with \Cref{ada_lemma_1}, we obtain that, for all $k \geqslant k_1$, 
	\begin{align*}
	\text{dist}(\x_{k+1}, \mathcal{X}) &\leqslant   \kappa \left(    \frac{L_2 + M}{2} \right)  \left(  L_1c_5+ 1\right) c_1^2   \cdot \text{dist}(\x_k, \mathcal{X})^2 \\
	&= c_6 \cdot \text{dist}(\x_k, \mathcal{X})^2 ,  \numberthis \label{li_local_7}
	\end{align*}
	where    $c_6 \triangleq \kappa \left(    \frac{L_2 + M}{2} \right)  \left(  L_1c_5+ 1\right) c_1^2    $.
	
	Next, we prove that $\{ \x_{k}\}_{k \geqslant 0}$ is Cauchy, and hence is a convergent sequence. For any $\epsilon >0$, by \cref{li_local_4}, there exists $k_2 \geqslant 0$ such that 
	\begin{align} \label{li_local_6}
	\text{dist}(\x_k, \mathcal{X}) \leqslant \min \left(  \frac{1}{2c_6},  \frac{\epsilon }{2c_1(c_5 + 1)}   \right), \quad \forall k \geqslant k_2.
	\end{align} 	Therefore, for any $k \geqslant \max(k_1,k_2)$ and  any $j \geqslant 0$, we have
	\begin{align*}
	\norml{\x_{k+j} - \x_{k}} &\leqslant \sum_{i = k}^{k+j-1}\norml{\x_{i+1} - \x_{i}} \leqslant \sum_{i = k}^{k+j-1} \left(\norml{\x_{i+1} - \hat{\x}_{i+1} } + \norml{\hat{\x}_{i+1} - \x_{i}} 
	\right) \\
	&\numleqslant{i} \sum_{i = k}^{k+j-1} \left( \beta_{i+1} \norml{\hat{\x}_{i+1} - \hat{\x}_i}  + \norml{\hat{\x}_{i+1} - \x_{i}} \right)  \\
	&\numleqslant{ii}\sum_{i = k}^{k+j-1} \norml{\hat{\x}_{i+1} - \x_{i}} \left(   \norml{\hat{\x}_{i+1} - \hat{\x}_i}  + 1   \right) \\
	&\numleqslant{iii} \sum_{i = k}^{k+j-1} \norml{\hat{\x}_{i+1} - \x_{i}}  (c_5 + 1 )  \numleqslant{iv}  \sum_{i = k}^{k+j-1}    c_1 \cdot \text{dist}(\x_i, \mathcal{X}) (c_5 + 1 )    \\
	&= c_1  (c_5 + 1 )  \sum_{i = k}^{k+j-1}       \text{dist}(\x_i, \mathcal{X}) \\
	&\numleqslant{v}  c_1  (c_5 + 1 )   \text{dist}(\x_k, \mathcal{X})  \sum_{i = 0}^{\infty} \frac{1}{2^i} \\
	&= 2c_1 \left( c_5 + 1   \right)  \cdot \text{dist}(\x_k, \mathcal{X}) \numberthis \label{li_local_8}\\
	&\numleqslant{iv} \epsilon,
	\end{align*}
	where (i) follows from \cref{li_transfer}, (ii) follows from \cref{li_1}, which implies that $\beta_{i+1} \leqslant \norml{\hat{\x}_{i+1} -  \x_i} $, (iii) follows from \Cref{momentum_6}, (iv) follows from \Cref{ada_lemma_1}, (v) follows from \cref{li_local_7} and   \cref{li_local_6}, which implies that $	\text{dist}(\x_{k+1}, \mathcal{X}) 
	\leqslant    \text{dist}(\x_k, \mathcal{X}) /2 $ and (iv) follows from \cref{li_local_6}. Then, we conclude  that  $\{ \x_{k}\}_{k \geqslant 0}$ is a Cauchy sequence, and thus converges.
	
	Next, we study the convergence rate of $\{ \x_{k}\}_{k \geqslant 0}$. Let $\x^\star \triangleq \lim_{k \rightarrow \infty} \x_k$. By (iv) of \Cref{li_lemma_local}, we have $\x^\star \in \mathcal{X}$. Then, for all $k \geqslant \max\{k_1,k_2\}$, we obtain that
	\begin{align*}
	\norml{\x^\star - \x_{k+1}} &= \lim_{j \rightarrow \infty} \norml{\x_{k+1+j} - \x_{k+1}} \numleqslant{i}  2c_1 \left( c_5 + 1   \right)\cdot \text{dist}(\x_{k+1}, \mathcal{X}) \\
	&\numleqslant{ii} 2c_1 \left( c_5 + 1   \right)c_6 \cdot \text{dist}(\x_{k}, \mathcal{X})^2   \numleqslant{iii} 2c_1 \left( c_5 + 1   \right)c_6  \norml{\x^\star - \x_{k}}^2  \numberthis \label{li_local_9}
	\end{align*}
	where (i) follows from \cref{li_local_8}, (ii) follows from \cref{li_local_7}, and (iii) follows from the fact that $\text{dist}(\x_{k}, \mathcal{X}) \leqslant \norml{\x^\star - \x_{k}}$.
	
	Note that \cref{li_local_9} implies that 
	\begin{align} \label{li_Q_quadratic_convergence}
	\frac{\norml{\x^\star - \x_{k+1}}}{\norml{\x^\star - \x_{k}}^2} \leqslant 2c_1 \left( c_5 + 1   \right)c_6 , \quad \forall k \geqslant \max\{k_1,k_2 \}.
	\end{align}
	Hence, $\{ \x_{k}\}_{k \geqslant 0}$ converges at least Q-quadratically to $\x^\star$.
	In particular, the Q-quadratic convergence region of CRm  is given by
	\begin{align}
	\norml{\x_{k} - \x^\star} \leqslant \frac{1}{2c_1c_6 \left( c_5 + 1   \right)}.
	\end{align}

\section{Inexact CRm Convergence: Proof of \Cref{tyep1_im_thm}} \label{proof_s_4}
 We first present the following lemma, which bounds the term $\norml{ \hat{\x}_{k+1} - \hat{\x}_k}$, where sequence $\{\x_k\}_{k \geqslant 0}$ is generated by the inexact variant of CRm.
 \begin{lemma}  \label{II_i_summable_lemma}
 	Let \Cref{Assumption_1} and \Cref{assump_2} hold. Set $M > 2L_2/3 + 2, \beta_{k} \leqslant \rho, \epsilon_1 \leqslant 1$. Then the sequence $\{ \hat{\x}_k \}_{k \geqslant 0}$ generated by the inexact variant of CRm  satisfies
 	\begin{align}
 	\norml{ \hat{\x}_{k+1} - \hat{\x}_k}   \leqslant c_8
 	\end{align}
 	where $c_8 \triangleq  \frac{1}{1 - \rho}   \left(\left( \frac{3M-2L_2 - 6}{12}  \right)^{-1/3}(f(\x_0) - f^{\star})^{1/3} + 1 \right)$.
 \end{lemma}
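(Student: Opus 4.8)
The plan is to mirror the proof of \Cref{momentum_6} (the exact-Hessian counterpart of this statement), replacing the clean per-step decrease \cref{momentum_12} by the inexact decrease \cref{im0} and absorbing the extra error term $\tfrac{\epsilon_1}{2}\norml{\hat{\x}_{k+1}-\x_k}^2$ into the cubic term. First I would set up the telescoping recursion for $a_k \triangleq \norml{\hat{\x}_{k+1} - \hat{\x}_k}$. Writing $\hat{\x}_{k+1} - \hat{\x}_k = (\hat{\x}_{k+1} - \x_k) + (\x_k - \hat{\x}_k)$, applying the triangle inequality, and bounding $\norml{\x_k - \hat{\x}_k} \leqslant \beta_k \norml{\hat{\x}_k - \hat{\x}_{k-1}} = \beta_k a_{k-1} \leqslant \rho\, a_{k-1}$ via \cref{li_transfer} and the hypothesis $\beta_k \leqslant \rho$, I obtain $a_k \leqslant b_k + \rho\, a_{k-1}$ with $b_k \triangleq \norml{\hat{\x}_{k+1} - \x_k}$. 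Since $\x_0 = \hat{\x}_0$ gives $a_0 = b_0$, unrolling yields $a_k \leqslant \sum_{j=0}^k \rho^{k-j} b_j \leqslant \tfrac{1}{1-\rho}\sup_j b_j$. Thus the lemma reduces to the uniform bound $\sup_j b_j \leqslant \big(\tfrac{12}{3M-2L_2-6}\big)^{1/3}(f(\x_0)-f^\star)^{1/3} + 1$.

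Next I would bound $b_k$ by a case split on its size. If $b_k \leqslant 1$, the bound holds trivially (this is the source of the additive ``$+1$'' in $c_8$). If $b_k > 1$, I would use \cref{im0} together with the algebraic identity $\tfrac{3M-2L_2}{12} = \tfrac{3M-2L_2-6}{12} + \tfrac12$: since $b_k > 1 \geqslant \epsilon_1$ forces $\tfrac{\epsilon_1}{2}b_k^2 \leqslant \tfrac12 b_k^3$, the error term is absorbed and \cref{im0} collapses to $f(\hat{\x}_{k+1}) - f(\x_k) \leqslant -\tfrac{3M-2L_2-6}{12}\,b_k^3$. Combining with the monotone step ($f(\x_{k+1}) \leqslant f(\hat{\x}_{k+1})$) gives $\tfrac{3M-2L_2-6}{12}\,b_k^3 \leqslant f(\x_k) - f(\x_{k+1})$.

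To bound the right-hand side by the total gap $f(\x_0) - f^\star$, I would argue that $\{f(\x_k)\}$ stays below $f(\x_0)$ along every large step. Since $M > \tfrac{2L_2}{3}+2$ forces $\gamma \triangleq \tfrac{3M-2L_2}{12} > \tfrac12$, any iterate with $b_i \geqslant \epsilon_1$ satisfies $-\gamma b_i^3 + \tfrac{\epsilon_1}{2}b_i^2 \leqslant 0$, so by \cref{im0} and the monotone step $f(\x_{i+1}) \leqslant f(\hat{\x}_{i+1}) \leqslant f(\x_i)$; hence $f(\x_k) \leqslant f(\x_0)$ and, with $f(\x_{k+1}) \geqslant f^\star$, the single-step drop is bounded by $f(\x_0)-f^\star$, yielding $b_k \leqslant \big(\tfrac{12}{3M-2L_2-6}\big)^{1/3}(f(\x_0)-f^\star)^{1/3}$. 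Combining the two cases gives $\sup_j b_j \leqslant \max\{1,\,(\tfrac{12}{3M-2L_2-6})^{1/3}(f(\x_0)-f^\star)^{1/3}\} \leqslant 1 + (\tfrac{12}{3M-2L_2-6})^{1/3}(f(\x_0)-f^\star)^{1/3}$, and substituting into the telescoped recursion produces exactly $a_k \leqslant c_8$.

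I expect the main obstacle to be the monotonicity argument in the large-step case. Unlike the exact algorithm, the inexact decrease \cref{im0} carries the positive error term $\tfrac{\epsilon_1}{2}b_k^2$, so $f(\hat{\x}_{k+1}) \leqslant f(\x_k)$ can fail for very small steps and $\{f(\x_k)\}$ need not be globally monotone. The delicate point is that this failure occurs only once $b_i < \epsilon_1$, i.e.\ in the near-stationary regime already handled by \cref{i_momentum_8}, so that along all the large steps that actually control $\sup_j b_j$ the function value stays below $f(\x_0)$, letting me replace the single-step drop by the total gap. Making the constants line up---the ``$-6$'' in the denominator and the additive ``$+1$''---is precisely what the $\epsilon_1 \leqslant 1$ hypothesis and the split $\gamma = \tfrac{3M-2L_2-6}{12} + \tfrac12$ are designed to deliver.
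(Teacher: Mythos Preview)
Your approach is essentially the paper's: both unroll the same recursion to reduce the claim to a uniform bound on $b_j \triangleq \norml{\hat{\x}_{j+1}-\x_j}$, and both absorb the $\tfrac{\epsilon_1}{2}b_i^2$ error into the cubic term whenever $b_i>\epsilon_1$ via the split $\tfrac{3M-2L_2}{12}=\tfrac{3M-2L_2-6}{12}+\tfrac12$. The one place where the paper is cleaner is precisely the obstacle you flag. Rather than a case split on $b_k\leqslant 1$ versus $b_k>1$ together with the claim ``$f(\x_k)\leqslant f(\x_0)$ along large steps,'' the paper simply invokes the termination rule for inexact CRm (stop at the first index with $b_k\leqslant\epsilon_1$): then \emph{by construction} every $i\leqslant k-1$ has $b_i>\epsilon_1$, so $\{f(\x_i)\}_{i\leqslant k}$ is genuinely monotone, the telescoping sum gives $\sum_{i=0}^{k-1}b_i^3\leqslant\tfrac{12}{3M-2L_2-6}(f(\x_0)-f^\star)$, and the terminal step contributes $b_k\leqslant\epsilon_1\leqslant 1$---this is where the additive ``$+1$'' enters. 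Your single-step bound yields the same constant once you accept that assumption, but without it the inference $f(\x_k)\leqslant f(\x_0)$ does not follow merely from ``each large step decreases $f$,'' since small and large steps could in principle interleave; your obstacle paragraph is really the termination criterion restated, and you should invoke it explicitly as the paper does.
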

 \begin{proof}
 	See \Cref{pf_lemmas_5}.
 \end{proof}
 
We next prove the main theorem for CRm.
 	The proof of this theorem is similar to that of  the exact case, and hence we only highlight the main difference for simplicity. Define $\epsilon_1 = \theta  \sqrt{\epsilon}$ where 
 	$$\theta  \triangleq  \min \left\{ \sqrt{\frac{2  }{\left(1  +  L_1 c_8 \right) (L_2 + M +2)}}, \frac{2}{M + 2L_2 + 2 + L_2c_8}\right\}.$$
 	
 	Since $\frac{2}{M + 2L_2 + 2 + L_2c_8} \leqslant 1$ and $\epsilon \leqslant 1$, we obtain that $\epsilon_1 \leqslant 1$. 
 	
 	Note that \Cref{im_important_cubic_lemma} implies that if the total number of iterations $k >  \left( \frac{12}{3M-2L_2 -6} \right) \frac{  f(\x_0) - f^{\star}} { \theta^3 \epsilon^{3/2} }$, then there exists a $k_0 \in \{0, \cdots, k  \}$ such that
 	\begin{align}
 	\norml{\hat{\x}_{k_0+1}- \x_{k_0}}  \leqslant  \epsilon_1 \label{II_im_4}.
 	\end{align}

 	Following the reasoning similar to that for proving \cref{li_8_0}, we obtain that
 	\begin{align*}
 	\norml{\nabla f(\x_{k_0+1})} &\leqslant  \norml{\nabla f(\hat{\x}_{k_0+1})}\left(1  + L_1    \norml{ \hat{\x}_{k_0+1} - \hat{\x}_{k_0}} \right) \\
 	&\numleqslant{i}  \norml{\nabla f(\hat{\x}_{k_0+1})}\left(1  +  L_1 c_8 \right) \\ &\numleqslant{ii} \left(1  +  L_1 c_8 \right)  \left( \frac{L_2 + M}{2}\norml{\hat{ \x}_{k_0+1} - \x_{k_0}}^2 + \epsilon_1 \norml{\hat{ \x}_{k_0+1} - \x_{k_0}}\right) \\
 	&\numleqslant{iii} \left(1  +  L_1 c_8 \right)  \left( \frac{L_2 + M +2}{2}  \right) \epsilon_1^2  \numleqslant{iv} \epsilon,	\numberthis \label{II_I_8} 
 	\end{align*} 
 	where (i) follows from \Cref{II_i_summable_lemma}, (ii) follows from \cref{i_momentum_7}, (iii) follows from \cref{II_im_4}, and (iv) follows from the definition of $\epsilon_1$.
 	
 	Then, following the reasoning similar to that for proving \cref{li_90}, we further obtain that
 	\begin{align*}
 	\lambda_{\min}\left({\nabla^2 f(\x_{k_0+1}) }\right) &\geqslant   \lambda_{\min}\left({\nabla^2 f(\hat{\x}_{k_0+1}) }\right) - L_2 \norml{\hat{\x}_{k_0+1} - \x_{k_0}} \norml{\hat{\x}_{k_0+1} - \hat{\x}_{k_0}}  \\  
 	&\numgeq{i} \lambda_{\min}\left({\nabla^2 f(\hat{\x}_{k_0+1}) }\right) - L_2c_8 \norml{\hat{\x}_{k_0+1} - \x_{k_0}}  \\  
 	&\numgeq{ii}  -\frac{M + 2L_2}{2} \norml{\hat{ \x}_{k_0+1} - \x_{k_0}} - \epsilon_1 - L_2c_8 \norml{\hat{\x}_{k_0+1} - \x_{k_0}} \\
 	&\numgeq{iii} -\frac{M + 2L_2 + 2 + L_2c_8}{2} \epsilon_1  \numgeq{iv} -\sqrt{\epsilon}, \numberthis \label{II_I_9}
 	\end{align*}
 	where (i) follows from \Cref{II_i_summable_lemma}, (ii) follows from \cref{momentum_14}, (iii) follows from \cref{II_im_4}, and (iv) follows from the definition of $\epsilon_1$. Combining \cref{II_I_8,II_I_9}, we obtain the main statement of \Cref{tyep1_im_thm}.
 	
\section{Subsampling Technique}
To prove   \Cref{total_compelxity}, we first establish a useful Proposition (\Cref{subsample_per_iteration}) to characterize the per iteration complexity of Hessian in \Cref{inexact_cov_1}, and then establish the overall convergence guarantee in \Cref{inexact_cov_2}.
\subsection{Per iteration Complexity} \label{inexact_cov_1}
In order to satisfy the inexact criterion $\epsilon_1$ in  \Cref{assump_2}, the mini-batch size should be large enough to guarantee statistical concentration with high probability \citep{Xu2017,JinChi2017cubic,kohler2017,Wang2018,Zhou2018}. Such an approach is referred to as the subsampling technique, and has been used in \cite{kohler2017} to implement the inexact CR. Here, we apply such an approach to CRm, and the following theorem characterizes the sample complexity to guarantee the inexact criterion for each iteration.
\begin{Proposition}[Per iteration Hessian sample complexity] \label{subsample_per_iteration} Assuming that \Cref{Assumption_1} holds for each $f_i(\cdot)$, then sub-sampled mini-batch of Hessians $\mathbf{H}_k, k = 0, 1, \ldots$   satisfies \Cref{assump_2} with probability at least $1 - \zeta$ provided that  
	\begin{align}\label{hessian_sample}
	|S_1| \geqslant \left( \frac{8L_1^2}{\epsilon_1^2} + \frac{4L_1 }{3 \epsilon_1 }\right) \log \left(\frac{4d}{\zeta}\right).
	\end{align} 
\end{Proposition}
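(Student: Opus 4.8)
The plan is to bound the probability that the sample-average Hessian $\mathbf{H}_k = \frac{1}{|S_1|}\sum_{i\in S_1}\nabla^2 f_i(\x_k)$ deviates from the true Hessian $\nabla^2 f(\x_k)/n$ (appropriately normalized for the finite-sum scaling) by more than $\epsilon_1$ in operator norm, and then invert this bound to read off the required sample size. The natural tool is a matrix concentration inequality --- specifically a matrix Bernstein inequality --- since each summand $X_i \triangleq \frac{1}{|S_1|}\big(\nabla^2 f_i(\x_k) - \nabla^2 f(\x_k)\big)$ is a zero-mean symmetric random matrix, and the deviation $\mathbf{H}_k - \nabla^2 f(\x_k)$ is a sum of $|S_1|$ such independent (with-replacement sampling) terms. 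The shape of the target bound, with one term scaling like $L_1^2/\epsilon_1^2$ and another like $L_1/\epsilon_1$, is exactly the signature of a Bernstein-type bound in which the $1/\epsilon_1^2$ term comes from the variance and the $1/\epsilon_1$ term comes from the almost-sure uniform bound on each summand.

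The key steps, in order, are as follows. First I would invoke \Cref{Assumption_1} applied to each $f_i$: since $\nabla f_i$ is $L_1$-Lipschitz, the Hessian satisfies $\norml{\nabla^2 f_i(\x_k)} \leqslant L_1$ for every $i$, and the same bound holds for the averaged true Hessian. This gives a uniform almost-sure bound $\norml{X_i} \leqslant \frac{2L_1}{|S_1|}$ on each centered summand, which supplies the constant $R = 2L_1/|S_1|$ in the Bernstein estimate. Second, I would bound the matrix variance proxy $\norml{\sum_i \mathbb{E}[X_i^2]}$; using $\norml{\nabla^2 f_i - \nabla^2 f} \leqslant 2L_1$ again yields a variance bound of the form $\sigma^2 \leqslant \frac{4L_1^2}{|S_1|}$. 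Third, I would plug $R$ and $\sigma^2$ into the matrix Bernstein tail bound, which asserts $\Pr\big(\norml{\mathbf{H}_k - \nabla^2 f(\x_k)} \geqslant \epsilon_1\big) \leqslant 2d\exp\!\big(-\frac{\epsilon_1^2/2}{\sigma^2 + R\epsilon_1/3}\big)$, with the $2d$ factor (matching the $4d$ up to the constant in the final statement) accounting for the dimension of the symmetric matrices. Fourth, I would require the right-hand side to be at most $\zeta$, take logarithms, and solve the resulting inequality for $|S_1|$, which produces precisely the $\big(\frac{8L_1^2}{\epsilon_1^2} + \frac{4L_1}{3\epsilon_1}\big)\log(4d/\zeta)$ form in \cref{hessian_sample}.

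The main obstacle I anticipate is the bookkeeping of constants so that the two terms in the variance-plus-range denominator of the Bernstein exponent land exactly on the stated coefficients $8L_1^2$ and $4L_1/3$. This requires carefully tracking the factor of $2$ in the centered-Hessian norm bound, the $1/2$ in the Bernstein numerator $\epsilon_1^2/2$, and the $1/3$ multiplying the range term, and then verifying that after multiplying through by $|S_1|$ the variance contributes $8L_1^2/\epsilon_1^2$ and the range contributes $4L_1/(3\epsilon_1)$. A secondary subtlety is justifying the independence structure: with-replacement uniform subsampling makes the $|S_1|$ summands i.i.d., which is the cleanest regime for matrix Bernstein; if without-replacement sampling is intended, one would either appeal to a Hoeffding--Serfling-type refinement or simply note that the with-replacement bound dominates. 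Once these constants are pinned down, the remainder is the routine algebraic inversion to isolate $|S_1|$, and the high-probability guarantee follows immediately by setting the failure probability to $\zeta$.
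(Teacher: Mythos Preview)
Your proposal is correct and follows essentially the same route as the paper: define the centered summands $X_i = \frac{1}{|S_1|}(\nabla^2 f_i(\x_k)-\nabla^2 f(\x_k))$, use the $L_1$-Lipschitz gradient assumption on each $f_i$ to get $\|X_i\|\leqslant 2L_1/|S_1|$ and the variance proxy $\sigma^2 \leqslant 4L_1^2/|S_1|$, apply the matrix Bernstein inequality of \cite{Tropp2012}, and invert. The only cosmetic difference is that the paper quotes the Bernstein bound with the prefactor $2(d_1+d_2)=4d$ for $d\times d$ symmetric matrices, which directly yields the $\log(4d/\zeta)$ in the statement.
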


The idea of the proof is to apply the following matrix Bernstein inequality \citep{Tropp2012}  to characterize the sample complexity in order to satisfy the inexactness condition in \Cref{assump_2} with the probability at least $1 - \zeta$. 
\newcommand{\Ebb}{\mathbb{E}}
\newcommand{\X}{\mathbf{X}}
\begin{lemma}[\cite{Tropp2012}, Theorem 1.6.2] \label{bernstern_0}
	Consider a finite sequence $\{\X_k\}$ of independent, random matrices with dimensions $d_1 \times d_2$. Assume that each random matrix satisfies 
	\begin{align*}
	\Ebb \X_k = \mathbf{0} \quad \text{and} \quad \norml{\X_k} \leqslant R  \quad \text{almost surely}.
	\end{align*}
	Define  
	\begin{align}
	\sigma^2 \triangleq \max \left(	\normlarge{\sum\nolimits_{k} \Ebb(\X_k\X_k^*) }, \normlarge{\sum\nolimits_{k} \Ebb(\X_k^*\X_k) } \right).
	\end{align}
	Then, for all $\epsilon \geqslant 0$,
	\begin{align*}
	P   \bigg(\normlarge{   \sum\nolimits_k\X_{k}  }  \geqslant   \epsilon \bigg)  \leqslant     2(d_1 + d_2) \exp \bigg( -  \frac{  \epsilon^2/2 }{  \sigma^2  + R\epsilon/3}\bigg).
	\end{align*}  
\end{lemma}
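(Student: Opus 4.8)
The plan is to prove this matrix Bernstein bound by the matrix Laplace transform method combined with Lieb's concavity theorem, following the template of \cite{Tropp2012}. First I would symmetrize: since the $\X_k$ are rectangular, pass to the Hermitian dilation $\mathcal{H}(\X_k) = \bigl[\begin{smallmatrix} 0 & \X_k \\ \X_k^* & 0 \end{smallmatrix}\bigr]$, a Hermitian matrix of dimension $D := d_1 + d_2$. The identities $\norml{\mathcal{H}(\X_k)} = \norml{\X_k} \leqslant R$, $\Ebb\,\mathcal{H}(\X_k) = 0$, and $\mathcal{H}(\X_k)^2 = \operatorname{diag}(\X_k \X_k^*,\, \X_k^* \X_k)$ show that $\lambda_{\max}\bigl(\sum_k \Ebb\,\mathcal{H}(\X_k)^2\bigr) = \sigma^2$. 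Writing $S := \mathcal{H}(\sum_k \X_k)$, one has $\normlarge{\sum_k \X_k} = \lambda_{\max}(S)$, so bounding the operator norm reduces to a one-sided tail bound for $\lambda_{\max}$ of a sum of independent, mean-zero, bounded Hermitian matrices. Applying that one-sided bound to both $S$ and $-S$ and taking a union bound over the two spectral tails is exactly what produces the prefactor $2D = 2(d_1+d_2)$.

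With $Y_k := \mathcal{H}(\X_k)$ and $S = \sum_k Y_k$, the next step is the matrix Markov/Chernoff inequality: for any $\theta > 0$, using $\exp(\theta\lambda_{\max}(S)) = \lambda_{\max}(\exp(\theta S)) \leqslant \operatorname{tr}\exp(\theta S)$ together with Markov's inequality,
\[
P\bigl(\lambda_{\max}(S) \geqslant t\bigr) \leqslant e^{-\theta t}\, \Ebb\, \operatorname{tr}\exp(\theta S).
\]
The crux is then to decouple the independent summands inside the trace exponential. Using Lieb's concavity theorem --- that $A \mapsto \operatorname{tr}\exp(H + \log A)$ is concave on positive-definite $A$ --- with Jensen's inequality and iterated conditioning on the independent sequence $\{Y_k\}$, one establishes
\[
\Ebb\, \operatorname{tr}\exp(\theta S) \leqslant \operatorname{tr}\exp\Bigl(\textstyle\sum_k \log \Ebb\, e^{\theta Y_k}\Bigr).
\]
I expect this to be the main obstacle: the elementary Golden--Thompson route is lossy for three or more summands, and the correct subadditivity of the matrix cumulant genuinely needs Lieb's theorem, which is the deepest ingredient.

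It remains to bound each matrix moment generating function and optimize. Since the scalar function $x \mapsto (e^{\theta x} - \theta x - 1)/x^2$ is increasing, on $[-R,R]$ it is dominated by its value $g(\theta) := (e^{\theta R} - \theta R - 1)/R^2$ at $x = R$; the transfer rule for matrix functions together with $\Ebb\, Y_k = 0$ gives $\Ebb\, e^{\theta Y_k} \preceq I + g(\theta)\,\Ebb\, Y_k^2$, and $\log(I + A) \preceq A$ yields $\log \Ebb\, e^{\theta Y_k} \preceq g(\theta)\,\Ebb\, Y_k^2$. Summing, invoking monotonicity of the trace exponential and $\operatorname{tr}\exp(M) \leqslant D\exp(\lambda_{\max}(M))$, and using $\lambda_{\max}(\sum_k \Ebb\, Y_k^2) = \sigma^2$, one arrives at
\[
P\bigl(\lambda_{\max}(S) \geqslant t\bigr) \leqslant D\,\exp\bigl(-\theta t + g(\theta)\,\sigma^2\bigr).
\]
Finally I would apply the elementary estimate $g(\theta) \leqslant (\theta^2/2)/(1 - R\theta/3)$ for $0 \leqslant \theta < 3/R$ and choose $\theta = t/(\sigma^2 + Rt/3)$, producing the exponent $-(t^2/2)/(\sigma^2 + Rt/3)$. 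Combining with the two-tail union bound and identifying $t$ with $\epsilon$ gives the claimed bound $2(d_1+d_2)\exp\bigl(-(\epsilon^2/2)/(\sigma^2 + R\epsilon/3)\bigr)$.
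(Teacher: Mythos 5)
Your proposal cannot be compared against an internal proof because the paper does not prove this lemma at all: it is imported verbatim from \cite{Tropp2012} (Theorem 1.6.2 there) and used as a black box in the proof of \Cref{subsample_per_iteration}. What you have written is a correct reconstruction of the standard proof from the cited source: the Hermitian dilation with its identities $\|\mathcal{H}(\mathbf{X}_k)\|=\|\mathbf{X}_k\|$ and $\lambda_{\max}\bigl(\sum_k \mathbb{E}\,\mathcal{H}(\mathbf{X}_k)^2\bigr)=\sigma^2$, the matrix Laplace transform bound $P(\lambda_{\max}(S)\geqslant t)\leqslant e^{-\theta t}\,\mathbb{E}\,\mathrm{tr}\exp(\theta S)$, the Lieb--Jensen subadditivity of the matrix cumulant generating function (and you are right that this, not Golden--Thompson, is the essential ingredient for three or more summands), the moment bound $\mathbb{E}\,e^{\theta Y_k}\preceq I+g(\theta)\,\mathbb{E}\,Y_k^2$ with $g(\theta)=(e^{\theta R}-\theta R-1)/R^2$, the elementary estimate $g(\theta)\leqslant(\theta^2/2)/(1-R\theta/3)$ for $0\leqslant\theta<3/R$, and the choice $\theta=\epsilon/(\sigma^2+R\epsilon/3)$, which indeed yields the exponent $-(\epsilon^2/2)/(\sigma^2+R\epsilon/3)$. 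One small remark: your two-tail union bound over $S$ and $-S$ is superfluous. The spectrum of a Hermitian dilation is symmetric (its eigenvalues are plus and minus the singular values of the dilated matrix), so $\lambda_{\max}\bigl(\mathcal{H}(\sum_k\mathbf{X}_k)\bigr)=\bigl\|\sum_k\mathbf{X}_k\bigr\|$ exactly, and the one-sided tail already controls the norm with prefactor $d_1+d_2$, which is the constant in Tropp's original theorem. Your union bound gives the weaker prefactor $2(d_1+d_2)$, which happens to coincide with the (conservative) constant in the lemma as stated in this paper, so your argument does establish precisely the displayed inequality; just be aware that the factor $2$ is an artifact of the redundant union bound rather than a necessity.
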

With this lemma in hand, we are ready to prove our main result.
\begin{proof}[Proof of \Cref{subsample_per_iteration}]
	In order to apply \Cref{bernstern_0}, we first define 
	\begin{align*}
		\X_i = \frac{1}{|S_1|} \left( \nabla^2 f_i(\x_k) - \nabla^2 f(\x_k) \right).
	\end{align*}
	Then, we obtain that
	\begin{align}
		\Ebb \X_i= 0  \label{Bersterin_0}
	\end{align}
	and
	\begin{align}
		\norml{\X_i} \leqslant \frac{\norml{\nabla^2 f_i(\x_k)} + \norml{\nabla^2 f(\x_k)}}{|S_1|}  = \frac{2L_1}{|S_1|} \triangleq R. \label{Bersterin_1}
	\end{align}
	where (i) follows from item 3 of  \Cref{Assumption_1} that $\nabla f_i(\cdot)$ is $L1$-Lipschitz which implies that $\norml{\nabla^2 f_i(\cdot)} \leqslant L_1$ and $ \norml{\nabla^2 f(\cdot)} \leqslant L_1$.
	
	Moreover, we have that
	\begin{align*}
			\sigma^2 &= \max \left(	 \normlarge{\sum_{i \in S_1} \Ebb(\X_i\X_i^*) }  , \normlarge{\sum_{i \in S_1} \Ebb(\X_i^*\X_i) } \right) \\
			&\numequ{i} \normlarge{\sum_{i \in S_1} \Ebb(\X_i^2) }  \leqslant \sum_{i \in S_1}\normlarge{ \Ebb(\X_i^2) }  \numleqslant{ii} \sum_{i \in S_1} \Ebb\normlarge{  \X_i^2 }   \leqslant \sum_{i \in S_1} \Ebb\normlarge{  \X_i }^2   \numleqslant{iii} \frac{4L_1^2}{|S_1|}, \numberthis \label{Bersterin_2}
	\end{align*}
	where (i) follows from the fact that $\X_i$ is real and symmetric, (ii) follows from Jasen's inequality, and (iii) follows from \cref{Bersterin_1}. 
	
	Plugging \cref{Bersterin_1,Bersterin_2,Bersterin_0}  into \Cref{bernstern_0}, we obtain 
	\begin{align}
	  P   \bigg(\normlarge{   \sum_{i \in S_1} \X_{k}  }  \geqslant   \epsilon_1 \bigg)  \leqslant     4d \exp \left( -  \frac{  \epsilon_1^2/2 }{  \frac{4L_1^2}{|S_1|}  + \frac{2L_1\epsilon_1}{3|S_1|}}  \right).
	\end{align}
	Thus, in order to satisfies $\normlarge{   \sum_{i \in S_1} \X_{k}  }  \geqslant   \epsilon_1 $ with probability at least $1 - \zeta$, it is sufficient to require  
	\begin{align}
		 4d \exp \left( -  \frac{  \epsilon_1^2/2 }{  \frac{4L_1^2}{|S_1|}  + \frac{2L_1\epsilon_1}{3|S_1|}}  \right) \leqslant \zeta,
	\end{align} 
	which gives that
	\begin{align}
		 |S_1| \geqslant \left( \frac{8L_1^2}{\epsilon_1^2} + \frac{4L_1 }{3 \epsilon_1 }\right) \log \left(\frac{4d}{\zeta}\right).
	\end{align}
\end{proof}
 
\subsection{Overall Complexity: Proof of \Cref{total_compelxity}}  \label{inexact_cov_2}
\begin{proof}
	  We first note that \Cref{tyep1_im_thm} shows that let $\epsilon_1 = \theta \sqrt{\epsilon}$, then the sequence $\{\x_{k} \}_{k\geqslant 0}$ generated by the inexact CRm  contains an $\epsilon$-second-order stationary point if the total number $k$ of iterations  satisfies
	 \begin{align}
	 k =  \frac{C}{\epsilon^{3/2}}. \label{total_iteration}
	 \end{align}
	  
	  Next, according to \Cref{subsample_per_iteration}, \Cref{assump_2} is satisfies with probability at least  $1 - \zeta$ for Hessian . Thus, according to the union bound,  for $k$ iterations, the probability of failure satisfaction of \Cref{assump_2} is at most $  k \zeta$. To obtain  \Cref{assump_2} holds for the total $k$ iteration with probability least $1 - \delta$, we require
	 \begin{align*}
	 1 -  k\zeta \geqslant 1 - \delta,
	 \end{align*}
	 which yields
	 \begin{align*}
	 \zeta \leqslant  \frac{\delta}{ k}.
	 \end{align*} 
	 Thus, with probability $1 - \delta$, the algorithms successfully outputs an $\epsilon$ approximated second-order stationary point if we set $\zeta =  {\delta}/ { k}$.
	 Therefore, according to \Cref{subsample_per_iteration}, \Cref{assump_2} with $\epsilon_1 = \theta \sqrt{\epsilon}$  holds with probability at least $1 - \zeta$ given that
	 \begin{align}
	 |S_1| = \left( \frac{8L_1^2}{\theta^2 \epsilon} + \frac{4L_1 }{3 \theta \sqrt{\epsilon} }\right) \log \left(\frac{4dk}{\delta}\right),
	 \end{align}
	  and the total Hessian sample complexity is bounded by
	 \begin{align}
	 S &= k \times |S_1| = k \times \left( \frac{8L_1^2}{\theta^2 \epsilon} + \frac{4L_1 }{3 \theta \sqrt{\epsilon} }\right) \log \left(\frac{4dk}{\delta}\right) \numleqslant{i} C \left( \frac{8L_1^2}{\theta^2 \epsilon^{5/2}} + \frac{4L_1 }{3 \theta {\epsilon}^2 }\right) \log \left(\frac{4d}{\epsilon\delta}\right).
	 \end{align} 
	  where (i) follows from \cref{total_iteration}.

\end{proof}

\section{Proof of Technical Lemmas}  \label{proof_of_lemmas}
In this section, we provide the proofs of the technical lemmas.
\subsection{Useful Inequality}
\begin{lemma} \label{inequality}
	For $x, \Lambda \in \mathbb{R}$, $0 < x \leqslant 1$, and $0 < \Lambda < 1$, the following inequality holds
	\begin{align}
		 x (1-\Lambda) \log \left(\frac{1}{1 - \Lambda}\right) \leqslant 1 - (1 - \Lambda)^x.
	\end{align}
\end{lemma}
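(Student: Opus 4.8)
The plan is to collapse this two-variable inequality into a single-variable monotonicity statement. First I would substitute $t \triangleq 1 - \Lambda$, so that $t$ ranges over $(0,1)$ and $\log\frac{1}{1-\Lambda} = -\log t > 0$. The claim then reads $x\,t\,(-\log t) \leqslant 1 - t^x$ for $0 < x \leqslant 1$, i.e. $1 - t^x + x\,t\,\log t \geqslant 0$.

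Next, fixing $t \in (0,1)$ and regarding the left-hand quantity as a function of $x$, I would define
\[
g(x) \triangleq 1 - t^x + x\,t\,\log t,
\]
and prove $g(x) \geqslant 0$ on $[0,1]$. The boundary value is immediate, since $g(0) = 1 - t^0 + 0 = 0$. Differentiating gives $g'(x) = -t^x \log t + t\log t = (\log t)\,(t - t^x)$.

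The sign analysis of $g'$ is the heart of the argument. Since $0 < t < 1$ we have $\log t < 0$; and since $x \mapsto t^x$ is strictly decreasing (again because $\log t < 0$), for $0 < x < 1$ we have $t^x > t^1 = t$, so that $t - t^x < 0$. Hence $g'(x) = (\log t)(t - t^x)$ is a product of two nonpositive factors and is therefore nonnegative on $(0,1)$. Combined with $g(0)=0$, this shows $g$ is nondecreasing on $[0,1]$ and thus $g(x) \geqslant 0$ throughout, which is exactly the desired inequality.

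Because everything reduces to elementary calculus, there is no serious obstacle here; the only point requiring care is bookkeeping of signs, in particular remembering that $0 < t < 1$ forces $\log t < 0$, so that the two nonpositive factors in $g'(x)$ combine into a nonnegative derivative. As a cross-check and alternative route, one may instead set $a \triangleq -\log t > 0$, use the concavity of $x \mapsto 1 - e^{-ax}$ to obtain $1 - t^x \geqslant x\,(1 - e^{-a})$ on $[0,1]$, and then close the gap with the elementary bound $e^a - 1 \geqslant a$; I would nonetheless keep the monotonicity argument as the primary proof for its directness.
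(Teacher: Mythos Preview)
Your proof is correct and follows essentially the same approach as the paper: both define the difference $g(x)=1-(1-\Lambda)^x - x(1-\Lambda)\log\frac{1}{1-\Lambda}$ (your $g$ after the substitution $t=1-\Lambda$), show that $g(0)=0$, and establish $g'(x)\geqslant 0$ on $(0,1]$ to conclude. The only cosmetic difference is that the paper deduces $g'\geqslant 0$ by noting $g'$ is decreasing with $g'(1)=0$, whereas you factor $g'(x)=(\log t)(t-t^x)$ and read off the sign directly.
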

\begin{proof}
	Let 
	\begin{align*}
		 f(x) =  1 - (1 - \Lambda)^x -  x (1-\Lambda) \log \left(\frac{1}{1 - \Lambda}\right),
	\end{align*}
	it is sufficient to prove  $f(x) \geqslant 0 $ for $0 < x \leqslant 1$, and $0 < \Lambda < 1$. We first note that
	\begin{align}
		 \nabla f(x) = - (1 - \Lambda)^x \log (1 - \Lambda) - (1-\Lambda) \log \left(\frac{1}{1 - \Lambda} \right),
	\end{align}
	which is decreasing with respect to $x$. Thus, we have, for $ 0 < x \leqslant 1$
	\begin{align*}
		 \nabla f(x) \geqslant \nabla f(1) = 0,
	\end{align*}
	which implies $f(x)$ increasing within $0 < x \leqslant 1$. Thus,
	\begin{align}
		 f(x) \geqslant f(0) = 0.
	\end{align}
	Therefore, we complete our proof.
\end{proof}
\subsection{Proof of \Cref{after_cubic}}
    \begin{proof} 
    	We first prove \cref{momentum_12}. Define $\s_{k} \triangleq \hat{ \x}_{k+1} - \x_k $. Then, we obtain that
    	\begin{align*}
    	  f(\hat{\x}_{k+1}) &\numleqslant{i}   f(\x_k) + \nabla f(\x)^T\s_k + \frac{1}{2} \s_k^T  \nabla^2 f(\x )\s_k   + \frac{L_2}{6}  \norml{\s_k}^3 \\
    	  &\numleqslant{ii} f(\x_k)  -  \frac{M}{12}  \norml{\s_k}^3 + \frac{L_2 - M}{6}  \norml{\s_k}^3 \\
    	  &=f(\x_k) - \frac{3M- 2L_2}{12} \norml{ \hat{ \x}_{k+1} - \x_k}^3,
    	\end{align*}
    	where (i) follows from \cref{Hessian_square_bound}, and (ii) follows from Lemma 4 in \cite{Nesterov2006} and the definition of $\hat{\x}_{k+1}$.
    	Then, we further obtain that
    	\begin{align}
    	f(\hat{\x}_{k+1}) - f(\x_k) \leqslant - \gamma \norml{\hat{\x}_{k+1}- \x_k}^3, \label{momentum_3}
    	\end{align}
    	which gives \cref{momentum_12}. 
    	
    	Next, we prove \cref{momentum_4}. Note that \cref{momentum_12} implies that, for all $i \geqslant 0$,
    	\begin{align*}
    	\norml{\hat{\x}_{i+1}- \x_i}^3 &\numleqslant{i} \frac{ f(\x_i) - f(\hat{\x}_{i+1}) }{\gamma} \numleqslant{ii} \frac{ f(\x_i) - f(\x_{i+1}) }{\gamma}, \numberthis \label{momentum_2}
    	\end{align*}	 
    	where (i) follows from \cref{momentum_3}, and (ii) follows from the definition of $\x_{i+1}$. Summing \cref{momentum_2} over $i$ from $0$ to $k$, we obtain that
    	\begin{align*}
    	\sum_{i=0}^{k} \norml{\hat{\x}_{i+1}- \x_i}^3 \leqslant \frac{ f(\x_0) - f(\x_{k+1}) }{\gamma} \leqslant \frac{ f(\x_0) - f^{\star}}{\gamma}, 
    	\end{align*}
    	which gives \cref{momentum_4}.
    	
    	To prove \cref{momentum_7,momentum_14}, note that 
    	\begin{align*}
    	\hat{ \x}_{k+1} = \argmin_{\s \triangleq \x - \x_k} \nabla f(\x_k)^T\s     + \frac{1}{2} \s^T \nabla^2 f(\x_{k}) \s + \frac{M}{6} \norml{\s}^3.
    	\end{align*}
    	Then, Lemma 5 in \cite{Nesterov2006} directly implies \cref{momentum_7,momentum_14}.
    	
    	Next, we prove \cref{momentum_8}. Note that
    	\begin{align}
    		  \min_{0 \leqslant i \leqslant k}  \norml{\hat{\x}_{i+1}- \x_i}^3  \leqslant \frac{1}{k} \sum_{i=0}^{k} \norml{\hat{\x}_{i+1}- \x_i}^3 \numleqslant{i}  \frac{1}{k} \frac{ f(\x_0) - f^{\star}}{\gamma}, \label{m0}
    	\end{align}
    	where (i) follows from \cref{momentum_4}. Then, \cref{momentum_8} follows by taking the cubic root on both sides of \cref{m0}.
    \end{proof}
\subsection{Proof of \Cref{im_important_cubic_lemma}}
	\begin{proof}
		We first prove \cref{im0}. Note that
		\begin{align*}
			f&(\hat{\x}_{k+1}) \\
			&\numleqslant{i}   f(\x_k) + \nabla f(\x_k)^T\s_k + \frac{1}{2} \s_k^T  \nabla^2 f(\x_k )\s_k   + \frac{L_2}{6}  \norml{\s_k}^3 \\
			&= f(\x_k) + \nabla f(\x_k)^T\s_k + \frac{1}{2} \s_k^T  \mathbf{H}_k \s_k  + \frac{M}{6}  \norml{\s_k}^3 +  \frac{L_2 - M}{6}  \norml{\s_k}^3 + \frac{1}{2} \s_k^T  (\nabla^2  f(\x_k ) - \mathbf{H}_k)\s_k \\
			&\numleqslant{ii}  f(\x_k)  - \frac{M}{12} \norml{\s_k}^3  + \frac{L_2 - M}{6}  \norml{\s_k}^3 + \frac{1}{2} \s_k^T (\nabla^2  f(\x_k ) - \mathbf{H}_k) \s_k \\
			&\numleqslant{iii}  f(\x_k) - \frac{3M-2L_2}{12} \norml{\s_k}^3 + \frac{1}{2} \norml{\s_k}^2 \epsilon_1,  
		\end{align*}
		where  (i) follows from \Cref{Hessian_square_bound}, (ii) follows from Lemma 4 in \cite{Nesterov2006} and the fact that $\hat{ \x}_{k+1} = \argmin_{\s \triangleq \x - \x_k} \nabla f(\x_k)^T\s     + \frac{1}{2} \s^T  \mathbf{H}_k \s + \frac{M}{6} \norml{\s}^3$, and (iii) follows from \Cref{assump_2}.
		
		Next, we prove \cref{i_momentum_7}. Note that \begin{align}
	     	 \hat{ \x}_{k+1} = \argmin_{\s \triangleq \x - \x_k} \nabla f(\x_k)^T\s     + \frac{1}{2} \s^T \mathbf{H}_k \s + \frac{M}{6} \norml{\s}^3. \label{i_m_2}
	     \end{align}
	     By the first-order optimality condition, we obtain that
	     \begin{align}
	     	 \nabla f(\x_k) + \mathbf{H}_k \s_k  + \frac{M}{2} \s_k \norml{\s_k} = 0. \label{i_m_1}
	     \end{align}
	     Then, we further obtain that
	     \begin{align*}
	     	 \normlarge{\nabla f(\hat{ \x}_{k+1})} &\numequ{i} \normlarge{\nabla f(\hat{ \x}_{k+1}) -\nabla f(\x_k) - \mathbf{H}_k \s_k  - \frac{M}{2} \s_k \norml{\s_k}  } \\
	     	 &\leqslant \normlarge{\nabla f(\hat{ \x}_{k+1}) -\nabla f(\x_k) - \mathbf{H}_k \s_k} + \frac{M}{2} \norml{\s_k}^2 \\
	     	 &\leqslant   \normlarge{\nabla f(\hat{ \x}_{k+1}) -\nabla f(\x_k) - \nabla^2 f(\x_{k}) \s_k} + \norml{(\nabla^2 f(\x_{k}) - \mathbf{H}_k) \s_k}+ \frac{M}{2} \norml{\s_k}^2 \\
	     	 &\numleqslant{ii} \frac{L_2 + M}{2}\norml{\hat{ \x}_{k+1} - \x_k}^2 + \epsilon_1 \norml{\hat{ \x}_{k+1} - \x_k},
	     \end{align*}
	     where (i) follows from  \cref{i_m_1}, and (ii) follows from \Cref{Hessian_square_bound}, \Cref{assump_2}, and the fact that  $\s_k \triangleq \hat{ \x}_{k+1} - \x_k$. 
	     
	    Next, we prove \cref{i_momentum_14}.  By \cref{i_m_2} and Proposition 1 in \cite{Nesterov2006}, we obtain that
	    \begin{align} \label{i_m_3}
	       \mathbf{H}_k  \succcurlyeq -\frac{M}{2} \norml{\hat{ \x}_{k+1} - \x_k} \mathbf{I}.
	    \end{align} 
	    Then, we further obtain that
	    \begin{align*}
	    	\lambda_{\min} (\nabla^2 f(\x_{k+1})) &\numgeq{i}   \lambda_{\min} ( \mathbf{H}_k )  - \norml{ \nabla^2 f(\x_{k+1})- \mathbf{H}_k } \\
	    	&\numgeq{ii} -\frac{M}{2} \norml{\hat{ \x}_{k+1} - \x_k} - \norml{ \nabla^2 f(\x_{k+1}) -\nabla^2 f(\x_{k})  } - \norml{\nabla^2 f(\x_{k})   - \mathbf{H}_k} \\
	    	&\numgeq{iii}  -\frac{M}{2} \norml{\hat{ \x}_{k+1} - \x_k} - L_2\norml{\hat{ \x}_{k+1} - \x_k} - \epsilon_1\\
	    	&= -\frac{M + 2L_2}{2} \norml{\hat{ \x}_{k+1} - \x_k} - \epsilon_1,
	    \end{align*}
	    where (i) follows from Wely's inequality, (ii) follows from \cref{i_m_3} and (iii) follows from \Cref{assump_2} and the fact that $\nabla^2 f$ is $L_2$-Lipschitz.
	    
	   Next, we prove \cref{i_momentum_8} by contradiction. Suppose for every $i \in \{0, \cdots, k\} $ it holds that 
	   \begin{align}
	   	\norml{\hat{\x}_{i+1}- \x_{i}}  >  \epsilon_1. \label{im_1}
	   \end{align}
	  Then, \cref{im0} further implies that
	   \begin{align*}
	   f(\hat{\x}_{i+1}) - f(\x_i) &\leqslant - \frac{3M-2L_2}{12} \norml{\hat{ \x}_{i+1} - \x_i}^3 + \frac{1}{2} \norml{\hat{ \x}_{i+1} - \x_i}^2 \epsilon_1 \\
	   	&\numleqslant{i} - \left(\frac{3M-2L_2 -6 }{12}   \right) \norml{\hat{ \x}_{i+1} - \x_i}^3, 
	   \end{align*}
	   where (i) follows from \cref{im_1}. Therefore, we have 
	   \begin{align}
	   	\left(\frac{3M-2L_2 -6}{12}  \right) \norml{\hat{ \x}_{i+1} - \x_i}^3 \leqslant  f(\x_i) -  f(\hat{\x}_{i+1}) \numleqslant{i} f(\x_i) -  f(\x_{i+1}), \label{II_I_7}
	   \end{align}
	   where (i) follows from the definition of $\x_{k+1}$.
	   Summing up \cref{II_I_7} over $i$ from $0$ to $k$, we obtain that
	   \begin{align}
	   	 \sum_{i=0}^{k} \left(\frac{3M-2L_2 -6}{12}  \right) \norml{\hat{ \x}_{i+1} - \x_i}^3 \leqslant f(\x_0) - f^{\star}. \label{im_2}
	   \end{align}
	   Combining \cref{im_2} with the fact that $ \norml{\hat{\x}_{i+1}- \x_{i}}  >  \epsilon_1$ for  $i \in \{0, \cdots, k\} $ and $ M  > 2L_2/3 + 2$, we have
	   \begin{align*}
	   	 k \left(\frac{3M-2L_2 -6}{12}  \right)  \epsilon_1^3 \leqslant \sum_{i=0}^{k} \left(\frac{3M-2L_2-6 }{12}  \right) \norml{\hat{ \x}_{i+1} - \x_i}^3 \leqslant f(\x_0) - f^{\star},
	   \end{align*}
	   which gives 
	   \begin{align*}
	   	k \leqslant \left( \frac{12}{3M-2L_2 -6} \right) \frac{  f(\x_0) - f^{\star}} { \epsilon_1^3 }.
	   \end{align*}
	   This contradicts with our assumption that $k > \left( \frac{12}{3M-2L_2 -6} \right) \frac{  f(\x_0) - f^{\star}} { \epsilon_1^3 }$. Therefore, there must  exist an integer $k_0 \in \{0, \cdots k\}$ such that
	   \begin{align}
	   \norml{\hat{\x}_{k_0+1}- \x_{k_0}}  \leqslant  \epsilon_1,  
	   \end{align} 
	   and the proof is complete.
    	\end{proof}

\subsection{Proof of \Cref{momentum_6}}  \label{pf_lemmas_4}

	\begin{proof}
		For $i \geqslant 1$, note that 
		\begin{align*}
		\norml{ \hat{\x}_{i+1} - \hat{\x}_i} &\numleqslant{i} \norml{ \hat{\x}_{i+1} -  \x_i} + \norml{\x_i - \hat{\x}_i} \\ 
		&\numleqslant{ii} \norml{ \hat{\x}_{i+1} -  \x_i} + \beta_{i} \norml{\hat{\x}_{i}-\hat{\x}_{i-1}} \\
		&\numleqslant{iii} \norml{ \hat{\x}_{i+1} -  \x_i} + \rho \norml{\hat{\x}_{i}-\hat{\x}_{i-1}},  \numberthis \label{momentum_recursive}
		\end{align*}
		where (i) follows from the triangle inequality, (ii) follows from \cref{li_transfer}, and (iii) follows from the fact that $\beta_{k+1} \leqslant \rho$ for all $k \geqslant 0$.
		
		Recursively applying \cref{momentum_recursive}, we obtain that
		\begin{align*}
		\norml{ \hat{\x}_{k+1} - \hat{\x}_k}  &\leqslant  \rho^{k}\norml{ \hat{\x}_{1} -  \hat{\x}_{0}} +   \sum_{i = 1}^{k} \rho^{k-i} \norml{ \hat{\x}_{i+1} -  \x_i} \\
		&\numleqslant{i}  \sum_{i = 0}^{k} \rho^{k-i} \norml{ \hat{\x}_{i+1} -  \x_i}    \numberthis \label{momentum_1},
		\end{align*}
		where (i) follows because $\hat{\x}_0 = \x_0$ in \Cref{Adaptive_version}. Note that \cref{momentum_1} is also true for $k=0$. Then, by \cref{momentum_1}, we further obtain that
		\begin{align*}
		\norml{ \hat{\x}_{k+1} - \hat{\x}_k} &\leqslant  \sum_{i = 0}^{k} \rho^{k-i} \norml{ \hat{\x}_{i+1} -  \x_i} \\
		&\leqslant \max_{i \in \{0,\cdots,k\}}\norml{ \hat{\x}_{i+1} -  \x_i} \sum_{i = 0}^{k} \rho^{k-i}\\
		&\numleqslant{i}\max_{i \in \{0,\cdots,k\}}\norml{ \hat{\x}_{i+1} -  \x_i}  \frac{1}{1 - \rho} \numberthis \label{II_2} \\
		&\numleqslant{ii} \frac{1}{1 - \rho}\left( \frac{ f(\x_0) - f^\star }{\gamma}\right)^{1/3},
		\end{align*}
		where (i) follows from the fact that $\rho <1$ and (ii) follows from \cref{momentum_4}. The proof of \Cref{momentum_6} is complete.
	\end{proof}
 
%
%
%
\subsection{Proof of \Cref{li_lemma_local}} \label{pf_lemmas_6}
\begin{proof}  
	To prove item $(i)$, it suffices to show that $\{f(\x_{k}) \}_{k \geqslant 0 }$ is a decreasing sequence with a lower bound. By \Cref{Assumption_1}, $f$ is bounded below. Thus, $\{f(\x_{k}) \}_{k \geqslant 0 }$ is bounded below. Also, note that
	\begin{align}
	f(\x_{k+1}) \numleqslant{i} f(\hat{\x}_{k+1}) \numleqslant{ii}  f( \x_{k})  \label{li_6}
	\end{align}
	where (i) follows from \cref{li_0}, and (ii) follows from \cref{momentum_12}. Thus, $\{f(\x_{k}) \}_{k \geqslant 0 }$ is a decreasing sequence and is bounded below, which further imply that $\{f(\x_{k}) \}_{k \geqslant 0 }$ converges. We denote the corresponding limit as $v$.
	
	To prove item $(ii)$, note that
	\begin{align*}
	\lim\limits_{k \rightarrow \infty}  \norml{\x_{k+1} - \x_{k}} &\leqslant  \lim\limits_{k \rightarrow \infty}  \norml{\x_{k+1} - \hat{\x}_{k+1}} + \norml{\hat{\x}_{k+1} - \x_{k} } \\
	&\numleqslant{i} \lim\limits_{k \rightarrow \infty}  \beta_{k+1} \norml{\hat{\x}_{k+1} -\hat{ \x}_{k} }  + \norml{\hat{\x}_{k+1} - \x_{k} }\\
	&\numleqslant{ii}  \lim\limits_{k \rightarrow \infty}  \norml{\hat{\x}_{k+1} - \x_{k} } \left(\norml{\hat{\x}_{k+1} -\hat{ \x}_{k} }  + 1 \right)\\ 
	&\numleqslant{iii}  \lim\limits_{k \rightarrow \infty}  \norml{\hat{\x}_{k+1} - \x_{k} } \left( \frac{1}{1 - \rho}\left( \frac{ f(\x_0) - f^\star }{\gamma}\right)^{1/3}+ 1 \right)\\ &\numequ{iv} 0 \numberthis \label{li_5}
	\end{align*}
	where (i) follows from \cref{li_transfer},  (ii) follows from \cref{li_1}, which implies that $\beta_{k+1} \leqslant  \norml{\hat{\x}_{k+1} - \x_{k} }$, (iii) follows from \Cref{momentum_6} and (iv) follows from \cref{momentum_4}, which implies that
	\begin{align}
	\lim\limits_{k \rightarrow \infty}  \norml{\hat{\x}_{k+1} - \x_{k} } = 0.\label{li_7}
	\end{align}  
	Then, we conclude that $\lim\limits_{k \rightarrow \infty}  \norml{\x_{k+1} - \x_{k}} = 0$.

	To prove item $(iii)$, note that \cref{li_6} implies that  $\x_{k} \in  \mathcal{L}(f(\x_0)) $ for all $k$. By the assumption that  $\mathcal{L}(f(\x_k))$ is bounded for some $k \geqslant 0$, we conclude that $\{\x_{k}\}_{k \geqslant 0} $ is also bounded. 
	
	To prove item $(iv)$, note that the Bolzano-Weierstarss theorem and item $(iii)$ of \Cref{li_lemma_local} imply that $\{\x_{k}\}_{k \geqslant 0} $ has a convergent subsequence. Also, the set of its accumulation points $\mathbf{\bar{\mathcal{X}}}$ is bounded.  Moreover, for every accumulation point  $\bar{\x}$, by \cref{li_8,li_9,li_7}, we obtain that
	\begin{align*}
	\norml{\nabla f(\bar{\x})} &\leqslant  \limsup\limits_{k \rightarrow \infty}\norml{\nabla f(\x_{k+1})}   \\
	&\leqslant \limsup\limits_{k \rightarrow \infty}  \frac{L_2}{2} \norml{\hat{\x}_{k+1} - \x_k}^2 \left(1 + \frac{ L_1 }{1 - \rho}\left( \frac{ f(\x_0) - f^\star }{\gamma}\right)^{1/3}  \right)  = 0,
	\end{align*}
	and 
	\begin{align*}
	\lambda_{\min}\left({\nabla^2 f(\bar{\x}) }\right) &\geqslant	\liminf\limits_{k \rightarrow \infty}	\lambda_{\min}\left({\nabla^2 f(\x_{k+1}) }\right)\\
	&\geqslant  	\liminf\limits_{k \rightarrow \infty} -\norml{\hat{\x}_{k+1} - \x_k} \left( \frac{M+2L_2}{2} +   \frac{ L_2 }{1 - \rho}\left( \frac{ f(\x_0) - f^\star }{\gamma}\right)^{1/3} \right) = 0.
	\end{align*}
	Thus,  we conclude that $\nabla f(\bar{\x})=0, \nabla^2 f(\bar{\x})\succcurlyeq 0$. Furthermore, item $(i)$ of \Cref{li_lemma_local} implies that ${f(\x_k)}_{k \geqslant 0}$ converges to its limit $v$.
	
\end{proof}
\subsection{Proof of \Cref{II_i_summable_lemma}} \label{pf_lemmas_5}

	\begin{proof}

		
		Following the proof similar to that of \Cref{momentum_6}, one can show that \cref{II_2} also holds for the inexact algorithm, i.e., 
		\begin{align}
				\norml{ \hat{\x}_{k+1} - \hat{\x}_k} &\leqslant  \max_{i \in \{0,\cdots,k\}}\norml{ \hat{\x}_{i+1} -  \x_i}  \frac{1}{1 - \rho}. \label{II_I_2}
		\end{align}
		
		Then, it suffices to bound $\norml{ \hat{\x}_{i+1} -  \x_i}$. Suppose that the inexact variant of CRm2 terminates at iteration $k$. By the termination criterion, we have
		\begin{align}
		\norml{\hat{\x}_{i+1} -  \x_i} &> \epsilon_1 \quad  \text{ for } \quad  0 \leqslant i \leqslant k-1, \label{II_I_3}
		\end{align}
		and
		\begin{align} 
			\norml{\hat{\x}_{k+1} -  \x_k} &\leqslant \epsilon_1 \leqslant 1 \label{II_I_5}.
		\end{align} 
		
		 For $0 \leqslant i \leqslant k-1$, \cref{im0} implies that
		\begin{align*}
				f(\hat{\x}_{i+1}) - f(\x_i) &\leqslant - \frac{3M-2L_2}{12} \norml{\hat{ \x}_{i+1} - \x_i}^3 + \frac{1}{2} \norml{\hat{ \x}_{i+1} - \x_i}^2  \epsilon_1 \\
				&\numleqslant{i}  - \left(\frac{3M-2L_2 - 6}{12} \right)\norml{\hat{ \x}_{i+1} - \x_i}^3 \numberthis \label{II_I_4}
		\end{align*}
		where (i) follows from \cref{II_I_3}. Summing \cref{II_I_4} over $i$ from $0$ to $k-1$, we obtain that
		\begin{align}
			 \sum_{i=0}^{k-1} \norml{\hat{ \x}_{i+1} - \x_i}^3 \leqslant \left( \frac{3M-2L_2 - 6}{12}   \right)^{-1}(f(\x_0) - f^{\star}),
		\end{align}
		which further implies that
		\begin{align}
			\max_{i \in \{0,\cdots,k-1 \}}\norml{ \hat{\x}_{i+1} -  \x_i} \leqslant \left( \frac{3M-2L_2 - 6}{12}  \right)^{-1/3}(f(\x_0) - f^{\star})^{1/3}. \label{II_I_6}
		\end{align}
		 
		 Combining \cref{II_I_5,II_I_6,II_I_2}, we obtain the statement of \Cref{II_i_summable_lemma}.
	\end{proof}

\end{document}